\documentclass{amsart}
\usepackage{amssymb}
\usepackage{amsmath}

\newtheorem{theo}{Theorem}[section]
\newtheorem{lemm}[theo]{Lemma}
\newtheorem{defi}[theo]{Definition}
\newtheorem{coro}[theo]{Corollary}
\newtheorem{prop}[theo]{Proposition}
\newtheorem{rema}[theo]{Remark}

\def\la{\langle}
\def\ra{\rangle}

\newcommand\NN{{\mathbb N}}
\newcommand\RR{{{\mathbb R}}}

\def\SS {\mathbb{S}}
\newcommand\cA{{\mathcal A}}

\newcommand\cC{{\mathcal C}}
\newcommand\cD{\mathcal D}

\newcommand\cS{{\mathcal S}}

\begin{document}
\title[Smoothing effect of  weak solutions]
{Smoothing effect of  weak solutions\\ for the spatially  homogeneous\\
Boltzmann Equation without angular cutoff}
\author{R. Alexandre}
\address{Radjesvarane Alexandre
\newline\indent
Department of Mathematics, Shanghai Jiao Tong University
\newline\indent
Shanghai, 200240, P. R. China\newline\indent
and
\newline\indent
IRENAV Research Institute, French Naval Academy
Brest-Lanv\'eoc 29290, France}
\email{radjesvarane.alexandre@ecole-navale.fr}
\author{Y. Morimoto }
\address{Yoshinori Morimoto
\newline\indent
Graduate School of Human and Environmental Studies,
Kyoto University
\newline\indent
Kyoto, 606-8501, Japan} \email{morimoto@math.h.kyoto-u.ac.jp}
\author{S. Ukai}
\address{Seiji. Ukai,
\newline\indent
17-26 Iwasaki-cho, Hodogaya-ku, Yokohama 240-0015, Japan}
\email{ukai@kurims.kyoto-u.ac.jp}
\author{C.-J. Xu}
\address{Chao-Jiang Xu
\newline\indent
School of Mathematics, Wuhan University 430072,
Wuhan, P. R. China
\newline\indent
and \newline\indent
Universit\'e de Rouen, UMR 6085-CNRS,
Math\'ematiques
\newline\indent
Avenue de l'Universit\'e,\,\, BP.12, 76801 Saint
Etienne du Rouvray, France } \email{Chao-Jiang.Xu@univ-rouen.fr}
\author{T. Yang}
\address{Tong Yang
\newline\indent
Department of mathematics, City University of Hong Kong,
Hong Kong, P. R. China
\newline\indent
and \newline\indent
School of Mathematics, Wuhan University 430072,
Wuhan, P. R. China} \email{matyang@cityu.edu.hk}

\subjclass[2000]{35A05, 35B65, 35D10, 35H20, 76P05, 84C40}

\keywords{Boltzmann equation, weak solution, smoothing effect.}

\date{29-April-2011}

\begin{abstract}
In this paper, we consider the spatially homogeneous Boltzmann equation without angular cutoff. We prove that every $L^1$ weak solution to the Cauchy problem with finite moments of all order acquires the $C^\infty$ regularity in the velocity variable for the positive time.
\end{abstract}

\maketitle

\section{Introduction}\label{s1}
Consider the Cauchy problem for the spatially homogeneous Boltzmann
equation,
\begin{equation}\label{1.1}
\left\{
\begin{array}{l}\displaystyle
f_t(t,v)=Q(f,f)(t,v),\quad t\in \RR^+, \,  v \in \RR^3,
\\
f(0,v)=f_0(v), \end{array}
\right.
\end{equation}
where $f= f(t,v)$ is the density distribution function
of particles  with velocity $v\in \RR^3$ at time $t$.
The right hand side of (\ref{1.1}) is given by the
Boltzmann bilinear collision operator
\[
Q(g, f)=\int_{\RR^3}\int_{\mathbb S^{2}}B\left({v-v_*},\sigma
\right)
 \left\{g(v'_*) f(v')-g(v_*)f(v)\right\}d\sigma dv_*\,,
\]
which is well-defined for
 suitable functions $f$ and $g$ specified later. Notice that the
collision operator $Q(\cdot\,,\,\cdot)$ acts only on the velocity
variable $v\in\RR^3$. In the following discussion, we will use the
$\sigma-$representation, that is, for $\sigma\in \mathbb S^{2}$,
$$
v'=\frac{v+v_*}{2}+\frac{|v-v_*|}{2}\sigma,\,\,\, v'_*
=\frac{v+v_*}{2}-\frac{|v-v_*|}{2}\sigma,\,
$$
which give the relations between the post and
pre collisional velocities.
For mono-atomic gas,  the non-negative cross section
 $B(z, \sigma)$ depends only on $|z|$ and the scalar product
$\frac{z}{|z|}\,\cdot\, \sigma$. As in \cite{amuxy4-1, amuxy4-2, amuxy4-3}, we assume
that it takes  the form
\begin{equation}\label{part2-hyp-2}
B(v-v_*, \cos \theta)=\Phi (|v-v_*|) b(\cos \theta),\,\,\,\,\,
\cos \theta=\frac{v-v_*}{|v-v_*|} \, \cdot\,\sigma\, , \,\,\,
0\leq\theta\leq\frac{\pi}{2},
\end{equation}
in which it contains a kinetic factor given by
\begin{equation}
\label{part2-hyp-phys}
\Phi (|v-v_*|) =\Phi_\gamma(|v-v_*|)= |v-v_*|^{\gamma},
\end{equation}
with $\gamma >-3$ and a factor related to the collision angle with singularity,
\begin{equation}\label{angular}
\begin{array}{l}
 b(\cos \theta) \theta^{2+2s}\rightarrow K, \
\
 \mbox{when} \ \ \theta\rightarrow 0+,
\end{array}
\end{equation}
for some positive constant $K$ and $0< s <1$.

The main purpose of this paper is to show the smoothing effect of the spatially homogeneous
Boltzmann equation, that is,  any
weak solution to the Cauchy problem \eqref{1.1}
acquires  regularity as soon as $t >0$.
Let us recall the precise definition of weak solution for the Cauchy problem \eqref{1.1} given in  \cite{villani}, see also \cite{villani2}. To this end,
we introduce  the standard notation,
\begin{align*}
&\|f\|_{L^p_\ell}=\left(\int_{\RR^3}|f(v)|^p(1+|v|)^{\ell p} d v\right)^{1/p}, \enskip \mbox{for} \enskip p \ge 1 , \ell \in \RR,\,\\
&\|f\|_{L\log L}=\int_{\RR^3}
|f(v)|\log (1+|f(v)|)d v\,.
\end{align*}
\begin{defi}\label{defi1}
Let $f_0\ge 0$ be a function defined on $\RR^3$ with finite mass,
energy and entropy, that is,
\begin{equation*}
\displaystyle\int_{\RR^3}f_0(v)[1+|v|^2+ \log(1+f_0(v))]dv<+\infty.
\end{equation*}
$f$ is  a weak solution of the Cauchy problem \eqref{1.1}, if it satisfies the following conditions:
\begin{align*}
&f \ge 0, \ f \in C(\RR^+; \cD'(\RR^3))\cap L^1([0, T];
L^1_{2+\gamma^+}(\RR^3)),\\
& f(0,\, \cdot\, )=f_0(\, \cdot\,), \\
& \int_{\RR^3} f(t, v)\psi(v) dv=\int_{\RR^3} f_0(v)\psi(v) d v \
\, \mbox{for}\,\,\psi=1, v_1, v_2, v_3, | v|^2;
\\
& f(t,\,\cdot)\in L \log L,\ \ \int_{\RR^3} f(t,v) \log
f(t,v)dv\leq \int_{\RR^3} f_0\log f_0dv, \ \
\forall t\geq 0;  \\
&\int_{\RR^3} f(t,v)\varphi(t,v)dv-\int_{\RR^3} f_0(v)\varphi(0,v)dv
-\int^t_0 d\tau \int_{\RR^3} f(\tau,v)\partial_{\tau} \varphi(\tau,v)dv \\
&\hspace{4cm} =\int^t_0 d\tau \int_{\RR^3} Q(f,
f)(\tau,v)\varphi(\tau,v)dv,&
\end{align*}
where $\varphi\in C^1(\RR^+; C^\infty_0(\RR^3))$. Here, the
right hand side of the last integral given above is defined by
\begin{align*}
&\int_{\RR^3} Q(f, f)(v)\varphi(v) d v\\
&=\frac{1}{2}\int_{\RR^6}\!\!\int_{\mathbb S^2}\!\! B\,
f(v_*)f(v)(\varphi(v')+\varphi(v'_*)
-\varphi(v)-\varphi(v_*))dv dv_*
d\sigma.
\end{align*}
Hence, this integral is well defined for any test function
$\varphi\in L^{\infty}([0, T]; W^{2, \infty}(\RR^3))$ (see p. 291 of \cite{villani}).
\end{defi}

To state the main theorem in this paper, we introduce
the entropy dissipation functional by
\[
D(g,f) = -\iiint_{\RR^3\times \RR^3\times \SS^2} B\, \big (g'_* f' -g_*  f  \big )\log f dv dv_* d\sigma\,,
\]
where $f=f(v), f'=f(v'), g_* = g(v_*), g'_* = g(v'_*)$.

\begin{theo}\label{Theorem1}
Let the cross section $B$ in the form \eqref{part2-hyp-2} satisfy  \eqref{part2-hyp-phys} and \eqref{angular}
with $0<s<1$.

\noindent
{\em 1) }  Suppose that  $\gamma > \max\{ -2s, -1\} $.
Let  $f$ be a weak solution of the Cauchy problem \eqref{1.1}.  For
$0\le T_0<T_1$, if $f$ satisfies
\begin{equation}\label{moment}
\mbox{$|v|^\ell f \in L^\infty([T_0,T_1]; $ $L^1(\RR^3))$ \enskip for any \enskip $\ell\in\NN$},
\end{equation}
then
\begin{align*}
f\in L^\infty([t_0,T_1]; \ \mathcal{S}(\RR^3)),
\end{align*}
for any $t_0\in ]T_0, T_1[\,\,.$

\noindent
{\em 2) }  When   $-1 \ge \gamma > -2s$,  the same conclusion as above holds
 if we have the following entropy dissipation estimate
\begin{align}\label{entropy-dissipation-estimate-0}
 \int_{T_0}^{T_1}  D(f(t),f (t)) dt < \infty\,.
\end{align}
\end{theo}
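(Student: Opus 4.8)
The plan is to bootstrap from the coercivity of the collision operator to an a priori regularity gain in $H^s$, and then iterate up to $\mathcal S(\RR^3)$. First I would recall (from the companion works \cite{amuxy4-1, amuxy4-2, amuxy4-3}) the sharp lower bound for the entropy dissipation: under \eqref{part2-hyp-2}--\eqref{angular}, there is $c>0$ and a constant depending on $\|f(t)\|_{L^1_2}$, $\|f(t)\|_{L\log L}$ such that
\begin{equation*}
-\int_{\RR^3} Q(f,f)\,\log f\, dv \;\ge\; c\,\big\|\,\sqrt{f}\,\big\|_{H^{s}_{\gamma/2}}^2 - C\|f\|_{L^1_{\gamma^+}}^2 .
\end{equation*}
Case 1 ($\gamma>\max\{-2s,-1\}$): here the moment assumption \eqref{moment} already propagates all polynomial moments on $[T_0,T_1]$, and the mass, energy, and entropy bounds from Definition \ref{defi1} are available for free; integrating the dissipation inequality in $t$ yields $\sqrt f\in L^2([T_0,T_1];H^s_{loc})$, hence a first quantitative smoothing. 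Case 2 ($-1\ge\gamma>-2s$): the soft-potential kinetic factor is too singular for the moment bound alone to control the loss terms, which is exactly why the extra hypothesis \eqref{entropy-dissipation-estimate-0} is imposed — it simply hands us $\int_{T_0}^{T_1}\|\sqrt{f(t)}\|_{H^s_{\gamma/2}}^2\,dt<\infty$ directly. So after this first step the two cases are on the same footing.

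Next I would upgrade from $\sqrt f\in L^2_tH^s$ to $f$ itself in $H^{s'}$ for a small $s'>0$ on a slightly shrunk interval. The mechanism is the standard one for these equations: write the equation as $f_t = Q(f,f)$ and split $Q(f,f)=Q^+(f,f)-Q^-(f,f)$ with $Q^-(f,f)=f\,(f*|\cdot|^\gamma)$ a (lower-order, under our moment control) multiplier, and treat $Q^+$ via the regularity of the gain term / the commutator estimates of \cite{amuxy4-1,amuxy4-2}. Using a time cutoff $\chi(t)$ supported in $]T_0,T_1[$ and equal to $1$ near $[t_0,T_1]$, one gets an energy estimate for $\chi f$ in $H^{s'}$ whose right-hand side involves $\|f\|_{L^2_tH^s}$-type norms already controlled, plus lower-order weighted $L^1$ norms controlled by \eqref{moment}. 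This produces $f\in L^\infty_{loc}(]T_0,T_1];H^{s'}(\RR^3))$, and by interpolation with the polynomial moments, $f\in L^\infty_{loc}(]T_0,T_1];H^{s'}_\ell(\RR^3))$ for every $\ell$.

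Finally I would run the induction: assuming $f\in L^\infty([\tau_0,T_1];H^{m}_\ell)$ for all $\ell$, differentiate the equation (apply $\langle v\rangle^\ell \partial_v^\alpha$) and use the bilinear upper bound for $Q$ in weighted Sobolev spaces together with the coercive lower bound applied to $\partial_v^\alpha f$ — the gain of $s$ derivatives in the coercivity beats the at-most-$s$-loss in the upper bound for $Q$, provided the weights are chosen so that all loss terms close. Shrinking the time interval by an arbitrarily small amount at each step (there are countably many steps, and $t_0>T_0$ leaves room), one reaches $f\in L^\infty([t_0,T_1];H^m_\ell)$ for all $m,\ell\in\NN$, i.e. $f\in L^\infty([t_0,T_1];\mathcal S(\RR^3))$. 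The main obstacle — and the reason the theorem is nontrivial — is the very first upgrade, from $\sqrt f$ to $f$: one only knows $f\in C(\RR^+;\cD')\cap L^1_tL^1_{2+\gamma^+}$, so the nonlinear terms must be made sense of and estimated starting from extremely weak a priori control, which forces careful use of the $L\log L$ bound, the mollified/regularized equation, and the trilinear entropy-dissipation structure rather than naive energy estimates; the soft-potential range $-1\ge\gamma>-2s$ is where this is most delicate and is precisely what \eqref{entropy-dissipation-estimate-0} is there to rescue.
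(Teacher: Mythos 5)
Your proposal has the right high-level arc (coercivity, then an $H^{s'}$ gain, then iteration), but the two central steps as you describe them would not go through, and they differ materially from what the paper actually does.

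First, the decomposition $Q=Q^{+}-Q^{-}$ with $Q^-(f,f)=f\,(f*\Phi_\gamma)\cdot\int b\,d\sigma$ is simply not available here: under \eqref{angular} the angular kernel satisfies $b(\cos\theta)\sim\theta^{-2-2s}$ so $\int_{\SS^2}b\,d\sigma=\infty$, and $Q^+$ and $Q^-$ are each divergent. Treating $Q^-$ as a benign multiplier and pushing the smoothing onto $Q^+$ is exactly the Grad cutoff mechanism, and it is precisely what breaks down in the non-cutoff setting that is the whole point of the theorem. The paper's replacement is entirely different: it decomposes the \emph{kinetic} factor $\Phi_\gamma=\Phi_c+\Phi_{\bar c}$ near and away from $|v-v_*|=0$, keeps the angular singularity intact, and then controls the \emph{commutator} $[M_\lambda^\delta(D),Q_c]$ in Fourier variables (Lemma~\ref{basic-inequality} and Proposition~\ref{commu-molli}), where $M_\lambda^\delta(\xi)=\la\xi\ra^\lambda(1+\delta\la\xi\ra)^{-N_0}$ is a genuine pseudodifferential mollifier with a time-dependent order $\lambda(t)=Nt+a$. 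That mollifier, together with Lemma~\ref{conti-w-equation} which justifies using $(M^\delta_{\lambda(t)})^2f(t)$ as a test function for an $L^1$ weak solution, is the actual engine of the proof, and your time-cutoff $\chi(t)$ plus unspecified ``energy estimate in $H^{s'}$'' does not address how one tests the weak formulation against anything beyond $W^{2,\infty}$ when $f$ is only in $L^1$.

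Second, your opening claim for Case 1 — that $\int_{T_0}^{T_1}D(f,f)\,dt<\infty$ comes for free from Definition~\ref{defi1} and yields $\sqrt f\in L^2_tH^s_{\gamma/2}$ — is unjustified. Definition~\ref{defi1} only imposes the entropy \emph{inequality} $\int f(t)\log f(t)\le\int f_0\log f_0$; no entropy--dissipation identity or bound is part of the notion of weak solution, which is precisely why the paper adds \eqref{entropy-dissipation-estimate-0} as a \emph{hypothesis} in Case 2 only. For $\gamma>\max\{-2s,-1\}$ the paper never invokes the entropy dissipation of $f$ itself: it applies the coercivity estimate \eqref{coer-uni} to the pair $(f,M_\lambda^\delta f)$ inside the energy balance \eqref{energyequality1}, which is a different object from $D(f,f)$. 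Finally, your last iteration step (``differentiate the equation, apply $\la v\ra^\ell\partial_v^\alpha$'') is again not what is done and would reintroduce exactly the problem of making sense of derivatives of a merely $L^1$ solution; the paper instead repeats the mollifier argument, raising $a$ by a fixed amount $s_0$ at each stage while shrinking the left endpoint of the time interval. In short, the strategy you sketch fails at its two load-bearing points (the $Q^\pm$ split, and the a priori entropy dissipation), and does not contain the paper's actual key device, the time-dependent mollifier and its commutator estimate.
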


The existence of
weak solutions to the  Cauchy problem \eqref{1.1}
was proved by Villani \cite{villani} when  $\gamma \ge -2$, assuming additionally in the case $\gamma >0$
 that $f_0 \in L^1_{2+\delta}$ for some $\delta >0$.
One important property of the weak solution for the
hard potentials (namely when $\gamma >0$) is, according to the work by Wennberg \cite{We96} (cf. also Bobylev\cite{bobylev-2}),
 the moment gain property. It means that $f$ satisfies \eqref{moment} for arbitrary $T_0>0$
  when the initial data
only satisfies  finite mass, energy and entropy.
However, without assuming the moment condition \eqref{moment},
we can still consider the smoothing effect in case with mild singularity ($0 < s< 1/2$) for the hard potential($\gamma>0$),
and the argument is
similar to the one used in \cite{HMUY} ( see Theorem \ref{H-infty-smooth} in Section
\ref{section5}).

This kind of regularization
property has been studied by many authors, cf. \cite{alexandre1,al-saf-1,desv-wen1,HMUY,MUXY-DCDS,ukai}. However, to our knowledge, it
has not yet been completely established in the sense that
the kinetic factor $\Phi(|z|)$ was modified
to avoid the singularity at the origin except the Maxwellian molecule case in previous works, and moreover some extra
conditions other than those in  Definition 1.1 of weak solution were required in \cite{al-saf-1,desv-wen1}.

We would like to emphasize that the result of Theorem \ref{Theorem1} gives the full regularization
property for any weak solution satisfying some natural boundedness condition
in some weighted $L^1$ and $L\log L$ space, that requires no differentiation
on the solution.

Recently in \cite{H-C}, it was proved that  $W^{1, 1}_p\cap H^3$ (strong) solutions
gain full regularity in the case $0<s<1/2$. Their method is based on the a priori estimate of the smooth solution, together with results given in \cite{dev-mouhot} about
the propagation of the norm $W^{1, 1}_p$ and the uniqueness of the solution.
Different from \cite{H-C}, we
start from the weak
solution given in Definition \ref{defi1} without any known uniqueness
result. Therefore, a priori estimate for the smooth function is not enough to show the
regularity for the weak solution in $L^1$ with moments. For the proof of Theorem \ref{Theorem1},
some suitable mollifier, acting to the weak solution,
becomes necessary, so that its commutator with
the collision operator requires some subtle analysis.

Throughout this paper, we will use the following notations:
$f\lesssim g$ means that there exists a generic positive constant C
such that $f\le Cg$; while $f\gtrsim g$ means $ f\ge Cg$. And $f\sim
g$ means that there exist two  generic positive constant $c_1$ and
$c_2$ such that $c_1 f\le g\le c_2 g$.

The rest of the paper will be organized as follows. In the next section,
we will prove a uniform coercivity estimate that improves the one given
in \cite{advw} which has its own interest. The mollifier and the commutator estimate will be
given in Section \ref{section2}. In Section \ref{section4} we prove the smoothing effect of weak
solution with extra $L^2$ assumption.
The last section is devoted to the proof of Theorem \ref{Theorem1}.

\section{A uniform coercive estimate}\label{sect-IV-10}
\smallskip \setcounter{equation}{0}

In this section, we will improve the coercive estimate for the collision operator
obtained in \cite{advw} by removing the restriction on $v$ in a bounded domain.

In view of the definition of the weak solution,  for $D_0 , E_0 >0$ we set
\[
{\mathcal{U}}(D_0,E_0)=
\{ g \in L^1_2 \mathop{\cap} L\log L\,\, ; \,\,
g \ge 0\,, \enskip \|g\|_{L^1}\ge D_0, \enskip \, \|g\|_{L^1_2} + \|g \|_{L\log L} \le E_0\,\,\}\,.
\]
Set  $B(R) = \{v \in \RR^3\,;\, |v| \le R\}$ for $R >0$ and set  $B_0(R,r) =
\{ v \in B(R)\,; \, |v-v_0| \ge r\}$ for  a $v_0 \in \RR^3$ and $r \ge 0$.
It follows from  the definition of ${\mathcal{U}}(D_0,E_0)$ that there
exist positive constants $R > 1 > r_0$ depending only on $D_0, E_0$ such that
\begin{equation}\label{uni-g}
\mbox{
$g \in {\mathcal{U}}(D_0,E_0)$\,\, \mbox{implies} \,\,$\chi_{B_0(R, r_0)} g \in {\mathcal{U}}(D_0/2,E_0)$
}\,,
\end{equation}
where $\chi_A$ denotes a characteristic function of the set $A \subset \RR^3$.
In fact, noting that for $R, M> 0$
\[R^2 \int_{\{|v|> R\}} g dv  + \log (1+M) \int_{\{ g >M\}} g dv \le
E_0\, .
\]
We have
$$\int_{\{|v| \le R \}\cap\{g \le M\} } g dv \ge 3D_0/4
$$
if $R \ge 2 \sqrt{2E_0/D_0}$ and $\log(1+M) \ge 8E_0/D_0$, moreover
we have
$$
\int_{\{|v-v_0| < r_0  \}\cap\{g \le M\} } g dv \le D_0/4
$$
if $r_0 \le (3D_0/(16\pi \exp(8E_0/D_0))^{1/3}$.
\begin{prop}\label{coercive-uniform}
Suppose that the cross section $B$ of the form \eqref{part2-hyp-2} satisfies  \eqref{part2-hyp-phys} and \eqref{angular}
with $0<s<1$ and  $\gamma >-3$. If $D_0, E_0>0$
and if   $g \in {\mathcal{U}}(D_0,E_0)$ then there exist positive constants  $c_0, C $ depending only on
$D_0, E_0$ such that for any $f \in {\mathcal S}(\RR^3)$,
\begin{align}\label{coer-uni}
 -\Big(
Q(g,f)\,,\,f\Big)_{L^2} \ge c_0 \|\la v \ra^{\gamma/2} f\|_{H^s}^2 - C
 \|\la v \ra^{\gamma/2}f\|^2_{H^{(-\gamma/2)^+}},
\end{align}
where $a^+ = \max\{ a, 0\}$ for $a \in \RR$. Furthermore,  if $\gamma+ 2s \le 0$, $0< s'<s$ and
if $g$ belongs to $L^{3/(3+\gamma+2s')}_{-\gamma}$  then there exists a $C_1 >0$ independent of $g$ such that
for any $f \in {\mathcal S}(\RR^3)$,
\begin{align}\label{coer-uni-app}
 -\Big(
Q(g,f)\,,\,f\Big)_{L^2} \ge c_0 \|\la v \ra^{\gamma/2} f\|_{H^s}^2
-\big( C + C_1 \|g\|_{L^{3/(3+\gamma+2s')}_{-\gamma} }\big) \| \la v \ra^{\gamma/2} f\|_{H^{s'}}^2\, .
\end{align}
\end{prop}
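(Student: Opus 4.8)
The plan is to follow the by-now standard route for coercivity estimates of the non-cutoff collision operator: pass to the Fourier side using the Bobylev identity, extract the leading "fractional Laplacian in velocity" part, and absorb the error terms. The key input will be the lower bound coming from the mass concentration property encoded in \eqref{uni-g}: since $g \in {\mathcal U}(D_0,E_0)$ forces $\chi_{B_0(R,r_0)}g \in {\mathcal U}(D_0/2,E_0)$, we may as well assume that $g$ has mass $\ge D_0/2$ supported in an annular region $B_0(R,r_0)$ away from a point $v_0$ and inside a ball of radius $R$; this gives us a strictly positive, suitably localized piece of $g$ to work with. I would first treat the Maxwellian-like situation by replacing the kinetic factor $\Phi_\gamma(|v-v_*|) = |v-v_*|^\gamma$ with the weight $\la v \ra^{\gamma}$ (this is where the interest of the proposition lies — previous work of \cite{advw} only got this for $v$ in a bounded set), writing $\Phi_\gamma(|v-v_*|) = \la v-v_*\ra^\gamma \cdot (\text{bounded perturbation on the support of }g)$ and handling the difference as a lower-order commutator-type term.

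Concretely, the first step is to write, for fixed $g$ localized as above and $f \in \mathcal S(\RR^3)$,
\[
-\big(Q(g,f),f\big)_{L^2} = \frac12 \iiint B\, g_* \big(f(v')-f(v)\big)^2 \, dv\, dv_*\, d\sigma + (\text{lower order}),
\]
the "cancellation lemma" producing the lower-order term $\int (Q(g,1)) f^2$ which is controlled by $\|g\|_{L^1_{\gamma^+}}\|\la v\ra^{\gamma/2}f\|_{L^2}^2$ type quantities. The positive quadratic term is then estimated from below. For the part of it where $|v-v_*|\sim 1$ (which survives because of the localization of $g$), one recognizes, after freezing $v_*$ and using the singularity \eqref{angular} of $b$, a bound below by $c\,\|\la v\ra^{\gamma/2} f\|_{H^s}^2$ up to an $H^0$-error; the cleanest way is the Fourier/Bobylev computation combined with the fact that on the support of the localized $g$ the symbol $\int b(\tfrac{\xi}{|\xi|}\cdot\sigma)\big(1-\cos(\tfrac{\xi\cdot(v-v_*)}{2}\,\text{-type})\big)\,d\sigma$ behaves like $|\xi|^{2s}$ for $|\xi|$ large. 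The weight $\la v\ra^{\gamma/2}$ is then inserted by commuting it through; since $\gamma$ can be negative, commuting $\la v\ra^{\gamma/2}$ past the difference quotient costs at most derivatives of order $(-\gamma/2)^+$, which is exactly the loss appearing in \eqref{coer-uni}.

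For the refined estimate \eqref{coer-uni-app} in the range $\gamma+2s\le 0$, the point is to replace the crude error term $C\|\la v\ra^{\gamma/2}f\|^2_{H^{(-\gamma/2)^+}}$ by one with a strictly smaller Sobolev index $s' < s$, at the price of a constant depending on $g$ through a Lebesgue norm. Here I would interpolate: the error terms all have the form $\int \Phi_\gamma(|v-v_*|)\,g_*\,(\cdots)\,dv_*$ with a kernel that is locally $L^{3/(3+\gamma+2s')}$ in $v-v_*$ near the origin (this is precisely the integrability threshold that makes the singular factor $|v-v_*|^{\gamma}$ convolve boundedly against the relevant Sobolev space), so by Hölder and a Hardy–Littlewood–Sobolev / Sobolev embedding one bounds them by $\|g\|_{L^{3/(3+\gamma+2s')}_{-\gamma}}\|\la v\ra^{\gamma/2}f\|_{H^{s'}}^2$. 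The main obstacle, and the place requiring genuine care, is the uniform control of the positive lower bound across \emph{all} $g\in{\mathcal U}(D_0,E_0)$: one must make sure the constant $c_0$ depends only on $D_0,E_0$ (via the geometric parameters $R,r_0$ from \eqref{uni-g}) and not on finer properties of $g$, and simultaneously that the weight $\la v\ra^{\gamma/2}$ is genuinely allowed to be a \emph{global} weight rather than a cutoff — handling the region $|v|$ large, where $\Phi_\gamma(|v-v_*|)\approx |v|^\gamma$ for $v_*$ in the bounded support of $g$, is exactly the improvement over \cite{advw} and is the technically delicate point, since for $\gamma<0$ this is where the competition between the gain $\|\la v\ra^{\gamma/2}f\|_{H^s}$ and the loss in lower-order Sobolev norms is tightest.
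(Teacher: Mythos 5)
Your proposal has the right high-level ingredients — the mass-concentration fact \eqref{uni-g}, splitting off the positive quadratic form $\cC_\gamma(g,f)=\iiint b\,|v-v_*|^\gamma g_*(f'-f)^2$, controlling the remainder by the Cancellation Lemma (and, for \eqref{coer-uni-app}, by Hardy--Littlewood--Sobolev), and the observation that the new difficulty relative to \cite{advw} is the global weight $\la v\ra^{\gamma/2}$ for $\gamma<0$. But there is a concrete gap: a single annular cut-out of $g$ around one point $v_0$ does not let you replace $|v-v_*|^\gamma$ by $\la v\ra^\gamma$ uniformly in $v$. Your phrase ``for the part of it where $|v-v_*|\sim 1$ (which survives because of the localization of $g$)'' is where the argument breaks down: if $v$ lies in the bounded region $B(4R)$ (comparable in size to the support of $g$), then $|v-v_*|$ can still be arbitrarily small for $v_*$ in a fixed annulus, and for $\gamma<0$ the kinetic factor is not bounded below by any constant times $\la v\ra^\gamma$ there. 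The statement ``bounded perturbation on the support of $g$'' is therefore false in exactly the region that matters.

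The mechanism the paper uses, and which your proposal is missing, is a \emph{double} partition of unity: one cutoff $\varphi_R$ for $|v|\ge 4R$ (where indeed $|v-v_*|\sim\la v\ra$ once $v_*\in B(R)$), and a finite cover of $B(4R)$ by small balls $A_j$ of radius $r_0/4$ around centers $v_j$, with a \emph{different} annular truncation $g\chi_{B_j(R,r_0)}$ of $g$ for each $j$ (removing $\{|v_*-v_j|<r_0\}$). On $\mathrm{supp}\,\varphi_{A_j}\times\mathrm{supp}\,(g\chi_{B_j(R,r_0)})$ one has $r_0/2\le|v-v_*|\le 6R$, so the kinetic factor is genuinely comparable to a constant there, and the crucial point is that \eqref{uni-g} guarantees each $g\chi_{B_j(R,r_0)}$ still has mass $\ge D_0/2$ uniformly in $j$ and in $g\in\mathcal U(D_0,E_0)$. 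One then reduces, on each patch, to the Maxwellian lower bound $\cC_0\gtrsim\|\la D\ra^s\cdot\|_{L^2(|\xi|\ge1)}^2$ from \cite{advw}, with weight-commutator errors estimated by the mean value theorem (giving only $\|f\|^2_{L^2_{\gamma/2}}$, not $H^{(-\gamma/2)^+}$), and finally recombines the local estimates using $\varphi_R^2+\sum_j\varphi_{A_j}^2\ge1$ plus $L^2$-boundedness of $[\la D\ra^s,\varphi]$. Related to this, your attribution of the $H^{(-\gamma/2)^+}$ loss to ``commuting $\la v\ra^{\gamma/2}$ past the difference quotient'' is also not quite right: that loss comes from the Cancellation Lemma term $\iint|v-v_*|^\gamma g_* f^2$ via the Hardy inequality $\sup_{v_*}\int|v-v_*|^\gamma|F|^2\,dv\lesssim\|F\|^2_{H^{-\gamma/2}}$, not from the weight commutator. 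Until the finite-cover-with-moving-annulus construction is put in place, the ``uniform in $g$, global in $v$'' lower bound is not established.
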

\begin{rema}\label{amelio}
It should be noted  that the above coercive estimate is more precise than Theorem 1.2 of \cite{H-C} and more adaptable
to prove the regularity of weak solutions. In fact, the coercive estimate \eqref{coer-uni} is uniform with respect to $g$.
 If $\gamma +4s >0$ and $D(g,g) < \infty$  then $g$ belongs to $L^{3/(3+\gamma+2s')}_{-\gamma}$, provided
that $g \in L^1_\ell$ for a sufficiently large $\ell$.
In fact,  it follows from the proof of Corollary \ref{entropy-dissipation-coer} below that
$D(g,g) < \infty$ implies $ \sqrt g \in H^s_{\gamma/2}$ and hence $\la v\ra^\gamma g \in L^{3/(3-2s)}$
by means of  the Sobolev embedding theorem, which together with
 Lemma \ref{interp-2-lem}   below lead us to this conclusion.
\end{rema}
\begin{proof}
Put
$$
\cC_\gamma(g,\, f) = \iiint_{\RR^3 \times \RR^3 \times \SS^2} b(.) | v-v_*|^\gamma g_* (f'-f)^2dv dv_*d\sigma,
$$
and
note that
$$
\big(Q(g,\, f),\,f\big) =-\frac{1}{2} \cC_\gamma(g,\, f)
+ \frac{1}{2}\iiint \Phi\,b \,\, g_* ( f'^2 -f^2) dv dv_*d\sigma \,.
$$
It follows from the Cancellation Lemma and Remark 6 in \cite{advw}
that
\begin{align*}
\left|\iiint \, b |v-v_*|^\gamma \,\, g_* (f^2 -f'^2) dv dv_*d\sigma\right|
&\lesssim
\left|\iint |v-v_*|^\gamma \,\, g_* f^2 \right|  dv dv_* \\
&\lesssim  \|g\|_{L^1_{|\gamma|}}\|f\|^2_{H^{(-\gamma/2)^+}_{\gamma/2}}\,\,,
\end{align*}
where the last inequality  in the case $\gamma \ge 0$ is trivial. While $\gamma <0$,
this follows from the fact that
\[
|v-v_*|^\gamma \lesssim \la v \ra^\gamma\{ {\bf 1}_{|v-v_*| \ge \la v \ra/2}
+ {\bf 1}_{|v-v_*| < \la v \ra/2} \la v_*\ra^{-\gamma} |v-v_*|^\gamma\},
\]
and the Hardy inequality $\sup_{v_*} \int |v-v_*|^\gamma |F(v)|^2 dv \lesssim \|F\|^2_{H^{-\gamma/2}}$
for $F = \la v \ra^{\gamma/2}f$. Furthermore, it follows from the Hardy-Littlewood-Sobolev inequality that
\begin{align*}
&\left|\iint |v-v_*|^\gamma \,\, g_* f^2 \right|  dv dv_* \lesssim \|g\|_{L^1} \|F\|_{L^2}^2+
\iint \frac{\la v_*\ra^{|\gamma|} g(v_*) F(v)^2}{|v-v_*|^{-\gamma}}dv dv_* \\
&\lesssim  \|g\|_{L^1}\|F\|^2_{L^2} + \|\la v \ra^{|\gamma|} g\|_{L^{3/(3+\gamma+2s') }}
\|F^2\|_{L^{3/(3-2s')}} \lesssim \|g\|_{L^{3/(3+\gamma+2s') }_{|\gamma|}}\|f\|^2_{H^{s'}_{\gamma/2}}       \,\,,
\end{align*}
where we have used the Sobolev embedding in the last inequality.

For the proof of the proposition, it now suffices to consider only the quantity $\cC_\gamma(g,\, f)$. The case $\gamma=0$ is obvious. In fact,
by Corollary 3 and Proposition 2 in \cite{advw},  there exists a $c_0= c_0(D_0, E_0)>0$ depending only on $D_0, E_0 >0$ such that
\begin{equation}\label{coercive-maxwellian}
\cC_0(g,\, f) \ge c_0 \int_{\{|\xi|\ge 1\}} \left||\xi|^s \hat f(\xi)\right|^2 d \xi \, ,\enskip  \mbox{$\forall f \in \mathcal S(\RR^3)$}\,,
\end{equation}
where $\hat f(\xi)$ is the Fourier transform of $f$ with respect to the variable $v\in\RR^3$.
{}From the proof in
\cite{advw}, it should be noted  that \eqref{coercive-maxwellian} holds for any $f \in L^2$ such that  the left hand side
is finite.

We consider the case $\gamma \ne 0$, following the argument used in the proof of Lemma 2
of \cite{advw}.
Choose $R, r_0$ such that \eqref{uni-g} holds.
Let $\varphi_R$ be a non-negative  smooth function not greater  than one, which is 1
for $| v| \ge 4R$ and $0$ for $| v| \le {2R}$.
In view of
\[
\frac{\la v \ra}{4} 
\le|v-v_*|\le 2 \la v \ra \enskip
\mbox{on supp $(\chi_{B(R)})_* \varphi_R$}\,,
\]
we have
\begin{align*}
&4^{|\gamma|}\Phi(|v-v_*|) g_*(f'-f)^2
\ge \big( g \chi_{B(R)}\big)_* \big(\la v \ra^{\gamma/2} \varphi_R\big)^2(f'-f)^2 \\
&\ge \big( g \chi_{B(R)}\big)_* \Big[\frac{1}{2} \Big(\big(\la v \ra^{\gamma/2} \varphi_R f \big)'-
\la v \ra^{\gamma/2} \varphi_R f \Big)^2
 -\Big( \big(\la v \ra^{\gamma/2} \varphi_R\big)'
 - \la v \ra^{\gamma/2} \varphi_R  \Big)^2 {f' }^2 \Big]\,.
\end{align*}
It follows from the mean value theorem that for a $\tau \in (0,1)$
\begin{align*}
\left|\big(\la v \ra^{\gamma/2} \varphi_R\big)'
 - \la v \ra^{\gamma/2} \varphi_R \right| &\lesssim \la v+ \tau(v'-v)\ra^{\gamma/2-1}|v-v_*|\sin\frac{\theta}{2}\\
 &\lesssim \la v_* \ra^{ |\gamma/2-1|} \la v'-v_* \ra^{\gamma/2} \sin\frac{\theta}{2}\\
 &\lesssim \la v_* \ra^{|\gamma/2|+  |\gamma/2-1|} \la v' \ra^{\gamma/2} \sin\frac{\theta}{2},
\end{align*}
because $|v-v_*|/\sqrt 2 \le |v'-v_*| \le |v+ \tau(v'-v) -v_*| \le |v-v_*|$ for $\theta \in [0,\pi/2]$. Therefore,
we have
\begin{align}\label{coer-out}
\cC_\gamma(g,\, f) \ge 2^{-1-2|\gamma|} \cC_0 ( g \chi_{B(R)},\, \varphi_R \la v \ra^{\gamma/2}   f     )
- C_R \|g\|_{L^1} \|f\|^2_{L^2_{\gamma/2}},
\end{align}
for a positive constant $C_R \sim R^{|\gamma|+  |\gamma-2|} $.
For a set $B(4R)$ we take a finite covering
\[
B(4R) \subset \mathop{\cup}_{v_j \in B(4R)} A_j \,, \enskip A_j = \{ v \in \RR^3\,;\, |v-v_j| \le \frac{r_0}{4}\}\,.
\]
For each $A_j$ we choose a non-negative smooth function $ \varphi_{A_j}$ which is  $1$ on $A_j$
and $0$ on $\{|v-v_j|\ge r_0/2\}$.
Note that
\[
\frac{r_0}{2} 
\le|v-v_*|\le 6R  \enskip
\mbox{on supp $(\chi_{B_j(R,r_0)})_* \varphi_{A_j}$}\,.
\]
Then we  have
\begin{align*}
&\Phi(|v-v_*|) g_*(f'-f)^2
\gtrsim \min\{r_0^{\gamma^+}, R^{-(-\gamma)^+}\} \big( g \chi_{B_j(R,r_0)}\big)_* \varphi_{A_j}^2(f'-f)^2 \\
&\gtrsim
R^{-\gamma^+} \min\{r_0^{\gamma^+}, R^{-(-\gamma)^+}\} \big( g \chi_{B_j(R,r_0)}\big)_* \\
&\quad \times
\left [\frac{1}{2} \Big(\big(\la v \ra^{\gamma/2} \varphi_{A_j} f \big)'-
\la v \ra^{\gamma/2} \varphi_{A_j} f \Big)^2
 -\Big( \big(\la v \ra^{\gamma/2} \varphi_{A_j}\big)'
 - \la v \ra^{\gamma/2} \varphi_{A_j}  \Big)^2 {f' }^2 \right]\,.
\end{align*}
Since $\left|\big(\la v \ra^{\gamma/2} \varphi_{A_j}\big)'
 - \la v \ra^{\gamma/2} \varphi_{A_j} \right| \lesssim R^{|\gamma|+1}\la v'\ra^\gamma \sin\theta/2$ if $|v_*|\le R$, we obtain
\begin{align}\label{coer-inner}
\cC_\gamma(g,\, f)
&\gtrsim
 \min\{(r_0/R)^{\gamma^+}, R^{-(-\gamma)^+}\}
\cC_0 ( g \chi_{B_j(R,r_0)},\, \varphi_{A_j} \la v \ra^{\gamma/2}   f     )\\
& \qquad \qquad \qquad - C'_{R,r_0} \|g\|_{L^1} \|f\|^2_{L^2_{\gamma/2}}, \notag
\end{align}
for a positive constant $C'_{R,r_0} \sim R^{2+2|\gamma|}$.
It follows from \eqref{coercive-maxwellian}, \eqref{coer-out} and \eqref{coer-inner}  that
there exist $c'_0, C, C' >0$ depending only on $D_0, E_0$ such that
\begin{align}\label{coer-uni-positive}
\cC_\gamma(g,\, f) &\ge c'_0\Big(
\|\la D \ra^s \varphi_R \la v \ra^{\gamma/2} f\|^2 + \sum_j \|\la D \ra^s \varphi_{A_j} \la v \ra^{\gamma/2} f\|^2\Big)
-C \|f\|_{L^2_{\gamma/2}}^2\\
&\ge c'_0 \|\la v \ra^{\gamma/2} f\|^2_{H^s} - C' \|f\|_{L^2_{\gamma/2}}^2, \notag
\end{align}
because $\varphi_R^2 + \sum_j \varphi_{A_j}^2 \ge 1$ and commutators
$[\la D\ra^s, \varphi_R]$, $[\la D \ra^s, \varphi_{A_j}]$ are $L^2$ bounded operators.
\end{proof}

\begin{rema}\label{f-L2}
\eqref{coer-uni-positive} holds for any $f \in L^2_{\gamma/2}$  such that ${\cC}_\gamma(g,f)$ is finite, because of  the remark after \eqref{coercive-maxwellian}.
Similarly, \eqref{coer-uni} holds for  any $f \in L^2_{\gamma/2}$
if $\gamma \ge 0$ and if its left hand side is finite.
\end{rema}

\begin{coro}\label{entropy-dissipation-coer}
Let $f(t) \in L^1_{\max\{2, \gamma\}} \cap L\log L$ be a weak solution.   Suppose that the cross section $B$ is the same as in
Propostion \ref{coercive-uniform}. Assume that for a $T>0$ we have
\begin{align}\label{entropy-dissipation-estimate}
 \int_0^T D(f(\tau),f(\tau) ) d \tau < \infty\,.
\end{align}
Then there exist positive constants $c_f$ and $C_f >0$ such that
\begin{equation}\label{entopy}
c_f \int_0^T \|\sqrt{f(\tau)}\|^2_{H^s_{\gamma/2}}d\tau \leq \int_0^T D(f(\tau), f(\tau))d\tau
+C_f \int_0^T\|f(\tau) \|_{L^1_{\gamma^+}} d\tau \,.
\end{equation}
\end{coro}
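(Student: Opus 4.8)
The plan is to deduce \eqref{entopy} from the uniform coercivity estimate of Proposition~\ref{coercive-uniform}. The key step is to show that any weak solution $f$ satisfies, for a.e.\ $t\in[0,T]$,
\[
D(f(t),f(t))\ \ge\ -2\big(Q(f(t),\sqrt{f(t)})\,,\,\sqrt{f(t)}\big)_{L^2}\,,
\]
and then to apply \eqref{coer-uni} to the function $\sqrt{f(t)}$ with $g=f(t)$, and integrate over $[0,T]$. The point of going through $Q(f,\sqrt f)$ is that the singular remainder produced by the Cancellation Lemma is already absorbed into the lower-order term of \eqref{coer-uni}, so it is never estimated on its own; this is what makes the soft potential range work.

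To get the reduction, apply the pre-postcollisional change of variables (which leaves $B\,dv\,dv_*\,d\sigma$ invariant) in the definition of $D$ to write $D(f,f)=\iiint B\,f_*f\,\log(f/f')$, all triple integrals being over $\RR^3\times\RR^3\times\SS^2$ with measure $dv\,dv_*\,d\sigma$. For $x,y\ge0$ the elementary bound $\log u\le u-1$ with $u=\sqrt{y/x}$ gives $x\log(x/y)\ge(\sqrt x-\sqrt y)^2+(x-y)$; inserting it with $x=f$, $y=f'$ yields
\[
D(f,f)\ \ge\ \iiint B\,f_*\big(\sqrt{f'}-\sqrt f\big)^2+\iiint B\,f_*(f-f')\ =\ \cC_\gamma(f,\sqrt f)+\iiint B\,f_*(f-f')\,.
\]
By the Cancellation Lemma of \cite{advw} (the angular integral converges because $s<1$, as in Remark~6 there) the last term equals $-C_\gamma\iint|v-v_*|^\gamma f\,f_*$ for a finite constant $C_\gamma>0$. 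On the other hand, the identity $\big(Q(g,f),f\big)=-\tfrac12\cC_\gamma(g,f)+\tfrac12\iiint\Phi\,b\,g_*(f'^2-f^2)$ recalled at the start of the proof of Proposition~\ref{coercive-uniform}, taken with $g=f$ and $\sqrt f$ in place of $f$, together with the same Cancellation Lemma, gives $-2\big(Q(f,\sqrt f),\sqrt f\big)_{L^2}=\cC_\gamma(f,\sqrt f)-C_\gamma\iint|v-v_*|^\gamma f\,f_*$. Comparison of the two displays proves the reduction.

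Fix now a $t$ with $D(f(t),f(t))<\infty$; by \eqref{entropy-dissipation-estimate} this holds for a.e.\ $t\in[0,T]$. Since $f$ is a weak solution, conservation of mass and energy, entropy decay, and the standard bound of $\|\cdot\|_{L\log L}$ by $\int f\log f$ plus mass and energy show that $f(t)\in\mathcal U(D_0,E_0)$ with $D_0,E_0$ depending only on $f_0$, so \eqref{coer-uni} applies with constants $c_0,C$ depending only on $D_0,E_0$. Applying it to $\sqrt{f(t)}$ — which belongs only to $L^2_{\gamma/2}$, with $\|\sqrt{f(t)}\|_{L^2_{\gamma/2}}^2=\|f(t)\|_{L^1_\gamma}<\infty$, so one invokes Remark~\ref{f-L2} or mollifies $\sqrt{f(t)}$ into $\mathcal S(\RR^3)$ and passes to the limit by lower semicontinuity — it gives
\[
D(f(t),f(t))\ \ge\ 2c_0\,\|\sqrt{f(t)}\|_{H^s_{\gamma/2}}^2-2C\,\|\sqrt{f(t)}\|_{H^{(-\gamma/2)^+}_{\gamma/2}}^2\,.
\]
If $\gamma\ge0$, the last norm is $\|\sqrt{f(t)}\|_{L^2_{\gamma/2}}^2=\|f(t)\|_{L^1_{\gamma^+}}$ and \eqref{entopy} follows after integration. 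If $-2s<\gamma<0$, then $(-\gamma/2)^+=-\gamma/2<s$, so $\|\sqrt{f(t)}\|_{H^{-\gamma/2}_{\gamma/2}}^2\le\epsilon\|\sqrt{f(t)}\|_{H^s_{\gamma/2}}^2+C_\epsilon\|f(t)\|_{L^1_\gamma}$ by interpolation; taking $\epsilon$ small (in terms of $c_0,C$) to absorb the first term and using $\|f(t)\|_{L^1_\gamma}\le\|f(t)\|_{L^1}=\|f(t)\|_{L^1_{\gamma^+}}$, we reach \eqref{entopy} after integrating; for $\gamma\le-2s$ one argues in the same way starting from \eqref{coer-uni-app} with $s'<s$.

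The substance is not the algebra but two technical points. First, the exact cancellation of the remainder $C_\gamma\iint|v-v_*|^\gamma f\,f_*$ between the $D$-side and the $Q(f,\sqrt f)$-side: for soft potentials this integral need not be finite for $f\in L^1\cap L\log L$ alone, so it must be made to disappear rather than estimated. Second, the legitimacy of testing the coercive estimate against $\sqrt{f(t)}$, which lies only in $L^2_{\gamma/2}$: one must either appeal to Remark~\ref{f-L2} or run a mollification argument, keeping track of quantities that are a priori valued in $[0,+\infty]$. The pointwise inequality, the membership $f(t)\in\mathcal U(D_0,E_0)$, and the soft potential interpolation are all routine.
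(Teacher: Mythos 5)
Your reduction $D(f,f)\ge -2\big(Q(f,\sqrt f),\sqrt f\big)$ is a nice conceptual packaging and, for $\gamma\ge 0$, amounts to the same computation as the paper's; but for soft potentials $\gamma<0$, which is the case the corollary is actually needed for (Theorem~\ref{L1-5}), your argument has a genuine gap. After the pre--post change of variables both sides you compare decompose as
$\cC_\gamma(f,\sqrt f)-C_\gamma\iint|v-v_*|^\gamma f\,f_*\,dv\,dv_*$, and for $\gamma<0$ the Cancellation remainder $\iint|v-v_*|^\gamma f\,f_*$ need not be finite when $f$ is only in $L^1\cap L\log L$: this is exactly the singular integral you say ``is never estimated on its own,'' but it appears with the \emph{same} sign and weight on both sides, so nothing forces it to be finite, and the finiteness of $D(f,f)$ alone does not let you conclude that $\cC_\gamma(f,\sqrt f)<\infty$. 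That finiteness is precisely what Remark~\ref{f-L2} requires before you may evaluate the coercive estimate at $\sqrt{f(t)}\in L^2_{\gamma/2}$; the remark extends \eqref{coer-uni} to $L^2_{\gamma/2}$ only when $\gamma\ge 0$, and extends \eqref{coer-uni-positive} only under the hypothesis $\cC_\gamma(g,F)<\infty$, which your chain has not established. Mollifying $\sqrt f$ and ``passing to the limit by lower semicontinuity'' does not resolve this, because the terms on the subtracted side are not lower semicontinuous in the relevant limits and you are working with a difference of two possibly infinite quantities.

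The device you are missing is the one the paper uses: for $\gamma<0$ it first replaces the kinetic factor $\Phi(|v-v_*|)=|v-v_*|^\gamma$ by the smaller bounded kernel $\la v-v_*\ra^\gamma$, using $(x-y)(\log x-\log y)\ge 0$ so the replacement only decreases the symmetrized entropy production. Then the Cancellation Lemma for the modified kernel produces a remainder controlled by $\|f\|_{L^1}^2$ outright, which immediately gives $\cC_{\la\cdot\ra^\gamma}(f,\sqrt f)\le D(f,f)+C\|f\|_{L^1}^2<\infty$, so Remark~\ref{f-L2} applies and \eqref{coer-uni-positive} (whose proof goes through verbatim with $\Phi$ replaced by $\la\cdot\ra^\gamma$) yields the $H^s_{\gamma/2}$ lower bound with the benign error term $\|\sqrt f\|_{L^2_{\gamma/2}}^2=\|f\|_{L^1_\gamma}\le\|f\|_{L^1_{\gamma^+}}$. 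This also dispenses with your interpolation step and with the need to appeal to \eqref{coer-uni-app} in the range $\gamma\le-2s$, which in your version would smuggle in an $L^p$ assumption on $f$ not present in the corollary. For $\gamma\ge 0$, where $\iint|v-v_*|^\gamma f\,f_*\lesssim\|f\|_{L^1_\gamma}^2$ is automatically finite, your computation is essentially the paper's and is fine.
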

\begin{proof}
We first consider the case $\gamma<0$. Note
\begin{align*}
D(f,f) &= -\iiint B \big (f'_* f' -f_* f \big )\log f dv dv_* d\sigma \\
&=\frac 1 4 \iiint B \, \big(  f'f'_* - f f_*\big) \big (\log f' f'_* -\log f f_* \big)  dv dv_* d\sigma \\
& \ge \frac 1 4  \iiint b(\cdot)  \la v-v_*\ra^\gamma \, \big(  f'f'_* - f f_*\big) \big (\log f' f'_* -\log f f_* \big)  dv dv_* d\sigma,
\end{align*}
because $(x-y)(\log x - \log y) \ge 0$ and $\Phi(|v-v_*|) \ge \la v-v_*\ra^\gamma$. Then we have
\begin{align*}
D(f,f) &\ge  -\iiint b(\cdot) \la v-v_*\ra^\gamma  \big (f'_* f' -f_* f \big )\log f dv dv_* d\sigma \\
& = \iiint b(\cdot) \la v-v_*\ra^\gamma  f_* \left(
f \log \frac{f}{f'} -f + f'\right) dv dv_* d\sigma \\
&\qquad + \iiint b(\cdot) \la v-v_*\ra^\gamma  f_* \left(
f -  f'\right) dv dv_* d\sigma \\
& \ge \iiint b(\cdot) \la v-v_*\ra^\gamma  f_* \left(
\sqrt {f'} - \sqrt f \right)^2  dv dv_* d\sigma - C \|f\|_{L^1}^2 \,,
\end{align*}
where we have used $x \log(x/y) - x + y \ge (\sqrt x - \sqrt y)^2$ and the Cancellation Lemma in the last inequality,
as the same as in the proof of Theorem 1 in \cite{advw}. Since the proof of Propostion \ref{coercive-uniform}
still works  with  $\Phi$ replaced by $\la v-v_*\ra^\gamma$, we obtain the desired estimate
in view of Remark \ref{f-L2}. The case $\gamma \ge 0$ is easier because we do not need to replace
$\Phi$ by $\la v-v_*\ra^\gamma$ when Cancellation Lemma is applied.
\end{proof}

\section{Mollifier and commutator estimate}\label{section2}
\setcounter{equation}{0}

Since the weak solution is only in $L^1$, we can not use it directly as
a test function in the definition of weak solution to get the energy
estimate. To overcome this, we need to mollify
it by some suitable mollifiers so that to consider the commutators between the
mollifiers and the collision operator becomes necessary.

Let $\lambda , N_0 \in \RR$, $\delta >0$ and put
\begin{align}\label{mollifier}
M_\lambda^\delta(\xi) = \frac{\la \xi \ra^{\lambda}}{(1+ \delta \la \xi \ra )^{N_0}}\,,\, \enskip \la \xi \ra = (1+|\xi|^2)^{1/2}\,.
\end{align}
Then $M_\lambda^\delta(\xi)$ belongs to the symbol  class $S^{\lambda -N_0}_{1,0}$ of pseudo-differential operators and
belongs to $S^{\lambda}_{1,0} $ uniformly with respect to $\delta \in ]0,1]$.
 $M_\lambda^\delta(D_v)$ denotes the associated pseudo-differential operator.
By direct calculation we see that for any $\alpha$
there exists
a $C_\alpha >0$ independent of $\delta$   such that
\begin{equation}\label{der-es}
\Big|\partial^\alpha_\xi  M_\lambda^\delta(\xi)\Big| \le C_\alpha M_\lambda^\delta(\xi) \la \xi \ra^{-|\alpha|}\,.
\end{equation}

\begin{lemm}\label{basic-inequality}
There exists a constant $C >0$ independent of $\delta$ such that
\begin{align}\label{M-inequality2}
&\left|M_\lambda^\delta (\xi) - M_\lambda^\delta (\xi-\xi_*)\right|\\
&\leq C \la \xi\ra^\lambda {\bf 1}_{\la \xi_* \ra \ge \sqrt 2 |\xi|} +C M_\lambda^\delta (\xi-\xi_*)\left\{{\bf 1}_{\la \xi_* \ra \ge |\xi|/2}+  \frac{\la \xi_*\ra}{\la \xi \ra }{\bf 1}_{|\xi|/2>\la \xi_* \ra }\right\}\notag\\
& +C M_\lambda^\delta (\xi-\xi_*)\left( \frac{M_\lambda^\delta (\xi_*)\big(1 + \delta \la \xi - \xi_*\ra\big)^{N_0}}{\la \xi -\xi_*\ra^\lambda}
 \right) {\bf 1}_{   \sqrt 2 |\xi| >\la \xi_* \ra \ge   |\xi|/2  }
\,.  \notag
\end{align}
And if $p \geq N_0 - \lambda$
\begin{align}\label{M-inequality}
&\left|M_\lambda^\delta (\xi) - M_\lambda^\delta (\xi-\xi_*)\right|\leq C M_\lambda^\delta (\xi-\xi_*)\left\{ \Big(
\frac{\la \xi_*\ra}{\la \xi \ra }\Big)^p {\bf 1}_{\la \xi_* \ra \ge \sqrt 2 |\xi|} \right.\\
&\quad + \left( \frac{M_\lambda^\delta (\xi_*)\big(1 + \delta \la \xi - \xi_*\ra\big)^{N_0}}{\la \xi -\xi_*\ra^\lambda}
+1 \right) {\bf 1}_{   \sqrt 2 |\xi| >\la \xi_* \ra \ge   |\xi|/2  }
+\left.  \frac{\la \xi_*\ra}{\la \xi \ra }{\bf 1}_{|\xi|/2>\la \xi_* \ra }\right\}\,.  \notag
\end{align}
\end{lemm}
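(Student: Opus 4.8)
The plan is to estimate the increment $M_\lambda^\delta(\xi) - M_\lambda^\delta(\xi-\xi_*)$ by splitting the $\xi_*$-space into the three natural regimes dictated by the indicator functions: the ``large frequency transfer'' regime $\la \xi_* \ra \ge \sqrt 2 |\xi|$, the ``comparable'' regime $\sqrt 2 |\xi| > \la \xi_* \ra \ge |\xi|/2$, and the ``small transfer'' regime $|\xi|/2 > \la \xi_* \ra$. In each regime I would compare the two arguments $\xi$ and $\xi - \xi_*$ and use the derivative estimate \eqref{der-es} together with the explicit form \eqref{mollifier}. Throughout I will use the elementary two-sided comparisons of Japanese brackets: $\la \xi \ra \le \sqrt 2 \la \xi_* \ra \la \xi - \xi_* \ra$ and, in the small-transfer regime, $\la \xi \ra \sim \la \xi - \xi_* \ra$ (more precisely $\frac{1}{2}\la \xi - \xi_* \ra \le \la \xi \ra \le 2 \la \xi - \xi_* \ra$ when $\la \xi_* \ra < |\xi|/2$), which are the workhorses here.

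First I would treat the small-transfer regime $|\xi|/2 > \la \xi_* \ra$, where $\xi$ and $\xi - \xi_*$ are comparable, so the whole segment $[\xi - \xi_*, \xi]$ stays in a zone where $M_\lambda^\delta$ varies slowly. By the mean value theorem, $|M_\lambda^\delta(\xi) - M_\lambda^\delta(\xi-\xi_*)| \le |\xi_*| \sup_{t \in [0,1]} |\nabla M_\lambda^\delta(\xi - t\xi_*)|$, and \eqref{der-es} bounds the gradient by $C M_\lambda^\delta(\xi - t\xi_*)\la \xi - t\xi_* \ra^{-1}$; since $\la \xi - t\xi_* \ra \sim \la \xi - \xi_* \ra \sim \la \xi \ra$ on this set and $M_\lambda^\delta$ is likewise comparable at all these points (again by \eqref{der-es}, integrating along the segment, or directly from the monotonicity structure of $M_\lambda^\delta$ in $|\xi|$), this yields the term $C M_\lambda^\delta(\xi - \xi_*)\frac{\la \xi_* \ra}{\la \xi \ra}\mathbf 1_{|\xi|/2 > \la \xi_* \ra}$. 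Next, in the comparable regime $\sqrt 2 |\xi| > \la \xi_* \ra \ge |\xi|/2$, I simply bound $|M_\lambda^\delta(\xi) - M_\lambda^\delta(\xi - \xi_*)| \le M_\lambda^\delta(\xi) + M_\lambda^\delta(\xi - \xi_*)$ and rewrite $M_\lambda^\delta(\xi)$: using $\la \xi \ra \le \sqrt 2 \la \xi_* \ra$ one gets $M_\lambda^\delta(\xi) \lesssim M_\lambda^\delta(\xi - \xi_*) \cdot M_\lambda^\delta(\xi_*)(1 + \delta \la \xi - \xi_* \ra)^{N_0} \la \xi - \xi_* \ra^{-\lambda} \cdot (\text{a bracket ratio that is } O(1))$ — this is precisely the third term of \eqref{M-inequality2}, the point being that $M_\lambda^\delta(\xi_*)(1+\delta\la\xi_*\ra)^{N_0} = \la \xi_* \ra^\lambda \ge c\la \xi\ra^\lambda$ in this zone. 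Finally, for the large-transfer regime $\la \xi_* \ra \ge \sqrt 2 |\xi|$, I again use $|M_\lambda^\delta(\xi) - M_\lambda^\delta(\xi-\xi_*)| \le M_\lambda^\delta(\xi) + M_\lambda^\delta(\xi-\xi_*)$ and bound $M_\lambda^\delta(\xi) \le \la \xi \ra^\lambda$ trivially (for $\lambda \ge 0$, and for $\lambda < 0$ it is even smaller after the obvious adjustment), while $M_\lambda^\delta(\xi - \xi_*) \le \la \xi - \xi_* \ra^\lambda \le C\la \xi \ra^\lambda$ on this set since $\la \xi - \xi_* \ra \sim \la \xi_* \ra$ there; this gives the first term $C\la \xi \ra^\lambda \mathbf 1_{\la \xi_* \ra \ge \sqrt 2 |\xi|}$, and the middle indicator $\mathbf 1_{\la \xi_* \ra \ge |\xi|/2}$ in \eqref{M-inequality2} simply absorbs the boundary region $\sqrt 2|\xi| > \la\xi_*\ra \ge |\xi|/2$ contribution of $M_\lambda^\delta(\xi-\xi_*)$ itself. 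Assembling the three regimes gives \eqref{M-inequality2}.

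For the second inequality \eqref{M-inequality}, under the extra hypothesis $p \ge N_0 - \lambda$, the only thing to improve is the large-transfer term $C\la\xi\ra^\lambda \mathbf 1_{\la\xi_*\ra \ge \sqrt 2|\xi|}$: I want to replace the bare $\la \xi \ra^\lambda$ by $C M_\lambda^\delta(\xi-\xi_*)(\la\xi_*\ra/\la\xi\ra)^p$. Since $\la\xi - \xi_*\ra \sim \la\xi_*\ra$ on this set, $M_\lambda^\delta(\xi - \xi_*) \sim \la\xi_*\ra^\lambda/(1+\delta\la\xi_*\ra)^{N_0}$; thus I must check $\la\xi\ra^\lambda \lesssim \la\xi_*\ra^\lambda (1+\delta\la\xi_*\ra)^{-N_0}(\la\xi_*\ra/\la\xi\ra)^p$, i.e. $\la\xi\ra^{\lambda + p} \lesssim \la\xi_*\ra^{\lambda + p}(1+\delta\la\xi_*\ra)^{-N_0}$. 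The ratio $\la\xi\ra/\la\xi_*\ra \le 1/\sqrt 2 < 1$ handles the powers provided $\lambda + p \ge 0$, which is guaranteed by $p \ge N_0 - \lambda$ together with $N_0 \ge 0$ (and if $N_0$ could be negative one uses $(1+\delta\la\xi_*\ra)^{-N_0}\ge 1$); the worst case $\delta\la\xi_*\ra$ large is where the condition $p \ge N_0 - \lambda$ is actually used, since then one needs $\la\xi\ra^{\lambda+p} \lesssim \la\xi_*\ra^{\lambda + p - N_0}\delta^{-N_0} \gtrsim \la\xi_*\ra^{\lambda+p-N_0}$, and $\lambda + p - N_0 \ge 0$ makes $\la\xi_*\ra^{\lambda+p-N_0} \ge \la\xi\ra^{\lambda+p-N_0} \ge \la\xi\ra^{\lambda+p}\la\xi\ra^{-N_0}$, which suffices once we also note $M_\lambda^\delta(\xi-\xi_*)$ carries the compensating $(1+\delta\la\xi_*\ra)^{-N_0}\sim(\delta\la\xi_*\ra)^{-N_0}$. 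The middle and small-transfer terms are carried over verbatim from \eqref{M-inequality2} (the ``$+1$'' in the middle term of \eqref{M-inequality} is just the $M_\lambda^\delta(\xi-\xi_*)$ self-contribution made explicit).

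The main obstacle is bookkeeping rather than conceptual: one must track uniformity in $\delta \in\, ]0,1]$ carefully through every bracket manipulation — in particular making sure that every appearance of $(1+\delta\la\cdot\ra)^{N_0}$ is matched by a compensating factor and that no constant secretly depends on $\delta$ — and one must handle the sign of $\lambda$ (and in principle of $N_0$) in the trivial bounds $M_\lambda^\delta(\xi)\le\la\xi\ra^\lambda$. The geometric heart, that $\xi$ and $\xi-\xi_*$ are comparable away from the comparable regime and that $\la\xi\ra \lesssim \la\xi_*\ra$ when $\la\xi_*\ra\gtrsim|\xi|$, is elementary; the estimate \eqref{der-es} does all the analytic work.
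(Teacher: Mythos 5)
Your treatment of the small-transfer regime (mean value theorem plus \eqref{der-es}) and the comparable regime matches the paper, and for \eqref{M-inequality2} the final conclusion is salvageable; but two of your intermediate claims are false, and the proof of \eqref{M-inequality} contains a genuine gap.

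First, a small slip in \eqref{M-inequality2}: on $\la\xi_*\ra\ge\sqrt 2|\xi|$ you claim $M_\lambda^\delta(\xi-\xi_*)\le\la\xi-\xi_*\ra^\lambda\le C\la\xi\ra^\lambda$, but on this set $\la\xi-\xi_*\ra\sim\la\xi_*\ra\gtrsim\la\xi\ra$, so the last inequality goes the \emph{wrong way} when $\lambda>0$ (and $\lambda=Nt+a$ is positive in the application). This does not sink the argument because the indicator $\mathbf 1_{\la\xi_*\ra\ge|\xi|/2}$ in the second term of \eqref{M-inequality2} already covers the entire region $\la\xi_*\ra\ge|\xi|/2$, including the large-transfer regime, so $M_\lambda^\delta(\xi-\xi_*)$ is handled there verbatim; but your stated justification, and your assertion that this indicator only ``absorbs the boundary region,'' is incorrect bookkeeping.

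The genuine gap is in \eqref{M-inequality}. You first discard the factor $(1+\delta\la\xi\ra)^{-N_0}$ by replacing $M_\lambda^\delta(\xi)$ with $\la\xi\ra^\lambda$, and then try to show $\la\xi\ra^\lambda\lesssim M_\lambda^\delta(\xi-\xi_*)(\la\xi_*\ra/\la\xi\ra)^p$. That inequality is \emph{false} uniformly in $\delta$: take $\xi_*=\sqrt 2\,\xi$ with $\delta|\xi|\to\infty$; then $(\la\xi_*\ra/\la\xi\ra)^p=O(1)$ and $M_\lambda^\delta(\xi-\xi_*)\sim\la\xi\ra^\lambda(\delta\la\xi\ra)^{-N_0}\to 0$ relative to $\la\xi\ra^\lambda$. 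Your attempt to rescue this by saying that $M_\lambda^\delta(\xi-\xi_*)$ ``carries the compensating $(1+\delta\la\xi_*\ra)^{-N_0}$'' is circular: that factor makes the right-hand side smaller, not larger. The observation you are missing is precisely the one the paper uses: when $p\ge N_0-\lambda$, the function
\begin{equation*}
\la\xi\ra^p M_\lambda^\delta(\xi)=\frac{\la\xi\ra^{p+\lambda}}{(1+\delta\la\xi\ra)^{N_0}}
\end{equation*}
is monotone increasing in $\la\xi\ra$, so since $\la\xi\ra\lesssim\la\xi_*\ra$ on the large-transfer set one gets $\la\xi\ra^p M_\lambda^\delta(\xi)\lesssim\la\xi_*\ra^p M_\lambda^\delta(\xi_*)\sim\la\xi_*\ra^p M_\lambda^\delta(\xi-\xi_*)$ with \emph{all} the denominators kept in play, and this is exactly where the $(1+\delta\la\xi\ra)^{-N_0}$ on the left cancels against the $(1+\delta\la\xi_*\ra)^{-N_0}$ on the right. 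Throwing that factor away first makes the inequality unprovable.
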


\begin{proof}
We first  note
\begin{equation}\label{equivalence-relation-spatiall-homo}
\left \{ \begin{array}{ll}
\la \xi \ra \lesssim \la \xi_* \ra \sim \la \xi-\xi_*\ra,   &\mbox{on supp ${\bf 1}_{\la \xi_* \ra\geq \sqrt 2 |\xi|}$},\\
\la \xi \ra \sim \la \xi-\xi_*\ra,   &\mbox{on supp ${\bf 1}_{\la \xi_*\ra \leq |\xi |/2 } $}, \\
\la \xi \ra \sim \la \xi_* \ra \gtrsim   \la \xi-\xi_*\ra,  &
\mbox{on supp ${\bf 1}_{\sqrt 2 |\xi| \geq \la \xi_*\ra \geq | \xi|/2 }$\,.}
\end{array}
\right.
\end{equation}
Since $\la \xi \ra^p M_\lambda^\delta (\xi)$ is increasing function of $\la \xi\ra$, we have
\begin{equation*}
\la \xi \ra^p M_\lambda^\delta (\xi) \lesssim \la \xi_* \ra^p M_\lambda^\delta (\xi_*)
\sim \la \xi_* \ra^p M_\lambda^\delta (\xi- \xi_*) \mbox{\enskip on supp ${\bf 1}_{\la \xi_* \ra\geq \sqrt 2 |\xi|}$}\, ,
\end{equation*}
and trivially,
$$
M_\lambda^\delta (\xi) \leq \la \xi \ra^\lambda.
$$
Note  that
\begin{equation*}
M_\lambda^\delta (\xi) \sim M_\lambda^\delta (\xi_*) \sim
M_\lambda^\delta (\xi-\xi_*) \frac{M_\lambda^\delta (\xi_*)\big(1 + \delta \la \xi - \xi_*\ra\big)^{N_0}}{\la \xi -\xi_*\ra^\lambda}
\mbox{\enskip on supp ${\bf 1}_{\sqrt{2}|\xi|\geq\la \xi_*\ra \geq |\xi |/2 } $}.
\end{equation*}
By the mean value theorem,  we have
\begin{align*}
\left|M_\lambda^\delta (\xi) - M_\lambda^\delta (\xi-\xi_*)\right| &\le
\int_0^1 |\big(\nabla_\xi  M_\lambda^\delta\big) (\xi + \tau(\xi-\xi_*))|d\tau |\xi_*| \\
&  \lesssim  M_\lambda^\delta (\xi-\xi_*)  \frac{\la \xi_*\ra}{\la \xi \ra }\quad
\mbox{on supp ${\bf 1}_{\la \xi_*\ra \leq |\xi |/2 } $}  \notag .
\end{align*}
Here we have used \eqref{der-es} and the second formula of \eqref{equivalence-relation-spatiall-homo}.
The above estimates imply \eqref{M-inequality} and \eqref{M-inequality2}.
\end{proof}

For the  kinetic factor
$|v-v_*|^\gamma$, we need to take into account the singular behavior
close to $|v-v_*|=0$ except $\gamma=0$. Therefore, we decompose the kinetic factor in two
parts. Let $0\leq \phi (z)\leq 1$ be a smooth radial function with
value $1$ for ~$z$ close to $0$, and $0$ for large values of ~$z$. Set
$$
\Phi_\gamma (z) = \Phi_\gamma (z) \phi (z) + \Phi_\gamma (z) (1-\phi (z)) = \Phi_c (z) + \Phi_{\bar c} (z).
$$
And then correspondingly we can write
$$
Q (f, g) = Q_c (f, g) + Q_{\bar c} (f, g),
$$
where the kinetic factor in the collision operator is defined
according to the decomposition respectively. Since $\Phi_{\bar c}
(z)$ is smooth, and $\Phi_{\bar c} (z)\lesssim \tilde{\Phi}_\gamma(z)$, where $\tilde{\Phi}_\gamma(|\,z\,|)=(1+|z|^2)^{\gamma/2}$ is the regular kinetic factor studied in \cite{amuxy3}. Then
$Q_{\bar c} (f, g)$ has similar properties as for
$Q_{\tilde{\Phi}_\gamma} (f, g)$ as regard to the upper bound and commutator estimations.
We recall the Proposition 2.9 of \cite{amuxy3}.

\begin{prop}\label{prop2.9_amuxy3}
Let $\lambda \in \RR$ and  $M(\xi)$ be a positive symbol of
pseudo-differential operator in $S^{\lambda}_ {1,0}$ in the form of
$M(\xi) = \tilde M(|\xi|^2)$.  Assume that, there exist
constants $c, C>0$ such that
 for any $s, \tau>0$
$$
c^{-1}\leq \frac{s}{\tau}\leq c \,\,\,\,\,\,\mbox{implies}
\,\,\,\,\,\,\,\,C^{-1}\leq \frac{\tilde M(s)}{ \tilde M(\tau)}\leq C,
$$
and  $M(\xi)$ satisfies
$$
|M^{(\alpha)}(\xi)| = |\partial_\xi^\alpha M(\xi)| \leq C_{\alpha}
M(\xi) \la \xi \ra^{-|\alpha|}\, ,
$$
for any $\alpha\in\NN^3$.  Then,  if $0<s<1/2$, for any $N >0$ there exists a $C_N >0$ such that
\begin{eqnarray}\label{10.8-2}
\left|( M(D_v) Q_{\bar c}(f,\, g)-  Q_{\bar c}(f,\, M(D_v) g) ,\,\, h)_{L^2}\right | \hskip4cm\\
\hskip3cm \leq C_N \|f\|_{L^1_{\gamma^+}} \Big(\|M(D_v)\,
g\|_{L^2_{\gamma^+}} + \| g \|_{H^{\lambda-N}_{\gamma^+}} \Big)
\|h\|_{L^2}.\nonumber
\end{eqnarray}
Furthermore, if $1/2 < s <1$,  for any $N>0$ and any $\varepsilon >0$ , there exists
a  $C_{N, \varepsilon}>0 $ such that
\begin{align}\label{10.8-3}
\left |(M(D_v) Q_{\bar c}(f,\, g)-Q_{\bar c}(f,\, M(D_v) g),\,\, h)_{L^2} \right |  \hskip4cm \\
\leq C_{N, \varepsilon} \|f\|_{L^1_{(2s+ \gamma-1)^+}} \Big(
\|M(D_v) g\|_{H^{2s-1+\varepsilon} _{(2s+ \gamma-1)^+}}  +
\|g\|_{H^{\lambda-N}_{\gamma^+}}\Big)
 \|h\|_{L^2}\, . \nonumber
\end{align}
When $s = 1/2$ we have the same  estimate  as (\ref{10.8-3}) with
$(2s+ \gamma-1)$ replaced by $(\gamma+ \kappa)$ for any small
$\kappa >0$.
\end{prop}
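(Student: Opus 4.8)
This estimate is quoted from \cite{amuxy3}; here is the strategy I would follow to prove it. The plan has three steps: reduce to the model operator $Q_{\tilde{\Phi}_\gamma}$ with regular kinetic factor $\tilde{\Phi}_\gamma(|v-v_*|)=(1+|v-v_*|^2)^{\gamma/2}$; express the commutator on the Fourier side by Bobylev's identity; and feed the pointwise bounds of Lemma~\ref{basic-inequality} into the three frequency regimes of \eqref{equivalence-relation-spatiall-homo}. For the reduction, since $\Phi_{\bar c}$ is smooth with $\Phi_{\bar c}(|v-v_*|)\lesssim\tilde{\Phi}_\gamma(|v-v_*|)\lesssim\la v\ra^{\gamma^+}\la v_*\ra^{\gamma^+}$ (for $\gamma<0$ the kinetic factor is simply bounded), one transfers a factor $\la v_*\ra^{\gamma^+}$ onto $f$ and a factor $\la v\ra^{\gamma^+}$ onto $g$ and $h$, which already accounts for all the weighted norms on the right of \eqref{10.8-2}; the heavier exponent $(2s+\gamma-1)^+$ in \eqref{10.8-3} will appear after the extra symmetrization needed for $s\ge1/2$ (see below). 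There remains to bound $(M(D_v)Q_{\tilde{\Phi}_\gamma}(f,g)-Q_{\tilde{\Phi}_\gamma}(f,M(D_v)g),h)_{L^2}$, and here $f$ enters only through the convolution $\tilde{\Phi}_\gamma\ast f$, equivalently through $\widehat{\tilde{\Phi}_\gamma f}=\widehat{\tilde{\Phi}_\gamma}\,\hat f$, a function whose $L^\infty$ norm is $\lesssim\|f\|_{L^1}$ and which carries the rapid decay of $\widehat{\tilde{\Phi}_\gamma}$ away from the origin; this decay is the smoothing of the kinetic factor exploited below.

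Next I would pass to Fourier variables. Writing the weak form in the $\sigma$-representation and transforming in $v$, the difference $h(v')-h(v)$ turns into $e^{iv'\cdot\xi}-e^{iv\cdot\xi}$, and since $v'=\tfrac{v+v_*}{2}+\tfrac{|v-v_*|}{2}\sigma$, the gain term collapses (the loss term contributing only a strictly lower-order piece, since $\tilde{\Phi}_\gamma$ is smooth) to an integral of the schematic form
\begin{align*}
&\big(M(D_v)Q_{\tilde{\Phi}_\gamma}(f,g)-Q_{\tilde{\Phi}_\gamma}(f,M(D_v)g),\,h\big)_{L^2}\\
&\quad\sim\iiint_{\RR^3\times\RR^3\times\SS^2}b\Big(\tfrac{\xi}{|\xi|}\cdot\sigma\Big)\,\widehat{\tilde{\Phi}_\gamma f}(\xi_*)\,\hat g(\xi-\xi_*)\,\overline{\hat h(\xi)}\,\big(M(\xi)-M(\xi-\xi_*)\big)\,d\sigma\,d\xi_*\,d\xi,
\end{align*}
where $\xi$ is the output frequency, $\xi_*$ the frequency carried by $f$ (which now ranges over all of $\RR^3$, weighted by $\widehat{\tilde{\Phi}_\gamma}$), and a residual collisional shift $\xi\mapsto\frac{\xi+|\xi|\sigma}{2}$ in the argument of $\hat g$ — of size $|\xi|\sin(\theta/2)$ — has been suppressed and is handled together with the cancellation term in the angular average. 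The essential point is that the commutator produces precisely the factor $M(\xi)-M(\xi-\xi_*)$ estimated in Lemma~\ref{basic-inequality}.

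I would then insert \eqref{M-inequality} with $p=N_0-\lambda$ (or \eqref{M-inequality2}) and treat the three regimes of \eqref{equivalence-relation-spatiall-homo} in turn. On $\{\la\xi_*\ra\ge\sqrt2|\xi|\}$, where $\la\xi\ra\sim\la\xi-\xi_*\ra$ is absorbed into $M(\xi-\xi_*)$ and there is a surplus $(\la\xi_*\ra/\la\xi\ra)^{-p}$, bounding $\widehat{\tilde{\Phi}_\gamma f}$ by $\|f\|_{L^1_{\gamma^+}}$ and using that $g$ is localized at $\la\xi-\xi_*\ra\sim\la\xi_*\ra$ large gives the lower-order remainder $\|g\|_{H^{\lambda-N}_{\gamma^+}}$. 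On the intermediate shell $\{\sqrt2|\xi|>\la\xi_*\ra\ge|\xi|/2\}$, the awkward factor $M_\lambda^\delta(\xi_*)(1+\delta\la\xi-\xi_*\ra)^{N_0}\la\xi-\xi_*\ra^{-\lambda}$ is, by \eqref{equivalence-relation-spatiall-homo}, comparable to $M(\xi)M(\xi_*)M(\xi-\xi_*)^{-1}$ with $\la\xi\ra\sim\la\xi_*\ra$, and the dangerous $M(\xi_*)$-growth in the frequency of $f$ is killed exactly by the rapid decay of $\widehat{\tilde{\Phi}_\gamma}$ inside $\widehat{\tilde{\Phi}_\gamma f}$. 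On the main diagonal regime $\{|\xi|/2>\la\xi_*\ra\}$, where $|M(\xi)-M(\xi-\xi_*)|\lesssim M(\xi-\xi_*)\la\xi_*\ra/\la\xi\ra$ with $M(\xi-\xi_*)\sim M(\xi)$, the operator $M(D_v)$ may be moved onto $g$, and the surplus $\la\xi_*\ra$ has to be matched against the powers of $\sin(\theta/2)$ produced by the suppressed collisional shift.

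The hard part will be this last, diagonal regime: one must extract genuine angular cancellation — by Taylor expansion of $\hat g$ around $\xi-\xi_*$ — and combine it with the cancellation already encoded in Lemma~\ref{basic-inequality}, while keeping the $f$-dependence in $L^1$-with-weight norms and never differentiating $f$. A single first-order expansion contributes one power of $\sin(\theta/2)$, whose angular integral against $b(\cos\theta)$ converges only for $s<1/2$; for $s\ge1/2$ one is forced to a second-order expansion, equivalently the $v'\leftrightarrow v'_*$ symmetrization of \cite{advw}, which lowers the effective order of the commutator from $2s$ to $2s-1$ at the price of one extra derivative on $M(D_v)g$ and an arbitrarily small loss $\varepsilon$ — this is the source of the norm $\|M(D_v)g\|_{H^{2s-1+\varepsilon}_{(2s+\gamma-1)^+}}$ and of the heavier weight on $f$ in \eqref{10.8-3}. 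With this in place, the remaining pieces — the symbolic bounds for $M_\lambda^\delta\in S^{\lambda}_{1,0}$ (via \eqref{der-es}), the angular integrations, and the lower-order $H^{\lambda-N}$ term — follow in the standard way.
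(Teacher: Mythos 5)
The paper does not prove this proposition. It is quoted from Proposition~2.9 of \cite{amuxy3} and used as a black box (the text before the statement says ``We recall the Proposition~2.9 of \cite{amuxy3}''). There is therefore no in-paper argument to compare against, beyond the analogous proof given for the singular part $Q_c$ in Proposition~\ref{commu-molli}, which uses the same Bobylev-type machinery. Judged against that template, your broad plan --- Bobylev formula, the three frequency regimes of \eqref{equivalence-relation-spatiall-homo}, Taylor expansion for angular cancellation --- is the right flavour, but two of the concrete claims in the middle of your argument are wrong and would block the proof.

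First, you propose to work with the gain term alone, asserting that ``the loss term contribut[es] only a strictly lower-order piece, since $\tilde\Phi_\gamma$ is smooth.'' For non-cutoff $b$ the gain and loss terms cannot be separated at all: $\int_{\SS^2}b\,d\sigma=\infty$, so $Q^-_{\bar c}(f,g)(v)=g(v)\iint\Phi_{\bar c}\,b\,f_*\,d\sigma\,dv_*$ is not finite on its own, and smoothness of $\tilde\Phi_\gamma$ does nothing to cure the $\theta\to 0$ divergence. The whole point of the representation actually used in this paper, $(Q_c(f,g),h)=\iiint b\,[\hat\Phi_c(\xi_*-\xi^-)-\hat\Phi_c(\xi_*)]\hat f(\xi_*)\hat g(\xi-\xi_*)\overline{\hat h(\xi)}\,d\xi\,d\xi_*\,d\sigma$, is that the gain/loss cancellation lives in the single bracket $\hat\Phi_c(\xi_*-\xi^-)-\hat\Phi_c(\xi_*)$, which vanishes at $\theta=0$ and makes the angular integral converge; the bracket must never be split. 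Second, in that representation $\hat g$ appears at $\xi-\xi_*$ exactly, with \emph{no} collisional shift: all the $\theta$-dependence sits in the kinetic factor's Fourier transform. Your ``residual collisional shift in the argument of $\hat g$'' and the plan to recover the angular cancellation ``by Taylor expansion of $\hat g$ around $\xi-\xi_*$'' therefore attack the wrong object; the Taylor expansion must be of $\hat\Phi$ (as in the terms $\cA_{1,1},\cA_{1,2}$ of the proof of Proposition~\ref{commu-molli}), not of $\hat g$. This is not a bookkeeping slip: since $\nabla_\xi\hat g=\widehat{-iv\,g}$, expanding $\hat g$ produces velocity \emph{weights} on $g$, whereas what actually appears in \eqref{10.8-3} is the Sobolev order $2s-1+\varepsilon$, which comes from the angular integral of $b$ against the one-power cancellation from $\nabla\hat\Phi$ in the regime $|\xi^-|\gtrsim\la\xi_*\ra$ (the analogue of $\cA_2$ and the bounds on $K,\tilde K$ in Proposition~\ref{commu-molli}).

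A smaller mismatch: you invoke Lemma~\ref{basic-inequality}, which is stated and proved for the specific mollifier $M_\lambda^\delta$ (its proof uses that $\la\xi\ra^p M_\lambda^\delta(\xi)$ is increasing), whereas Proposition~\ref{prop2.9_amuxy3} is for a general symbol $M$ subject only to the two listed hypotheses; one would have to rederive the needed pointwise bound on $M(\xi)-M(\xi-\xi_*)$ from those hypotheses rather than cite the lemma.
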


\begin{rema}\label{hard-poten}
In the case $\gamma >0$ and $0<s<1/2$, it follows from Lemma 3.1 of \cite{HMUY} and its proof that \eqref{10.8-2} can be replaced by
\begin{eqnarray*}
\left|( M(D_v) Q_{\bar c}(f,\, g)-  Q_{\bar c}(f,\, M(D_v) g) ,\,\, h)_{L^2}\right | \hskip4cm\\
\hskip3cm \leq C_N \|f\|_{L^1_{\gamma}} \Big(\|M(D_v)\,
g\|_{L^2_{\gamma/2}} + \| g \|_{H^{\lambda-N}_{\gamma/2}} \Big)
\|h\|_{L^2_{\gamma/2}}.\nonumber
\end{eqnarray*}
\end{rema}
{}From now on, we concentrate on the study for the singular part $Q_{c}(f, g)$.

\begin{prop} \label{commu-molli}
Assume that  $0< s<1, \gamma+2s>0$.
Let  $0<s'<s$ satisfy $
\gamma+2s'>0$ and $2s'\geq (2s -1)^+\, .$
If
\begin{equation}\label{restriction}
5+ \gamma \geq 2(N_0-\lambda)\, ,
\end{equation}
then we have

\noindent
1) If $s' + \lambda <3/2$,   then
\begin{align*}
\Big
| \Big (M_\lambda^\delta (D_v) \, Q_c (f,  g) - Q_c (f,  M_\lambda^\delta (D_v)\, g)  , h \Big ) \Big |
\lesssim  \| f\|_{L^1} || M_\lambda^\delta(D_v)g||_{H^{s'}} \, || h||_{H^{s'}}\,.
\end{align*}
2) If $s'+\lambda \ge 3/2$,
then
\begin{align*}
\Big
| \Big (M_\lambda^\delta(D_v) \, Q_c (f,  g) - &Q_c (f,  M_\lambda^\delta(D_v)\, g)  , h \Big ) \Big |\\
&\lesssim  \Big(  \| f\|_{L^1} +  \| f\|_{{ H^{ (\lambda +s'-3)^+}}} \Big)|| M_\lambda^\delta(D_v)g||_{H^{s'}} \, || h||_{H^{s'}}\,.
\end{align*}
Furthermore, if  $s>1/2$ and $\gamma >  -1$, then the assumption  \eqref{restriction} can be relaxed to
\begin{equation}\label{slight}
4+ \gamma + 2s > 2(N_0-\lambda)\, .
\end{equation}
\end{prop}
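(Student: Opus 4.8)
\textit{Proof strategy.} The plan is to estimate, uniformly in $\delta\in\,]0,1]$, the bilinear form
\[
\mathcal I_\delta(f,g,h):=\Big(M_\lambda^\delta(D_v)\,Q_c(f,g)-Q_c(f,M_\lambda^\delta(D_v)g)\,,\,h\Big)_{L^2}.
\]
First I would reduce to $f,g,h\in\mathcal S(\RR^3)$ by density, and use that $M_\lambda^\delta(D_v)$ is a self-adjoint multiplier to write $\mathcal I_\delta=(Q_c(f,g),M_\lambda^\delta(D_v)h)-(Q_c(f,M_\lambda^\delta(D_v)g),h)$. Then, starting from the weak form $(Q_c(f,g),\psi)=\iiint b\,\Phi_c\,f_*\,g\,(\psi'-\psi)\,d\sigma dv dv_*$, Fourier-transforming the test function and Taylor-expanding its difference along the collision chord, $\psi'-\psi=\int_0^1\!\!\int i(v'-v)\cdot\xi\,\hat\psi(\xi)e^{i(v+\tau(v'-v))\cdot\xi}d\xi d\tau$, one arrives at a representation of $\mathcal I_\delta$ on the Fourier side in which the mollifier enters only through a difference $M_\lambda^\delta(\xi)-M_\lambda^\delta(\eta)$ of its values at two frequencies $\xi,\eta$ (dual to $h$ and $g$), multiplied by a kernel built from $\hat f$, the Fourier transform of $\Phi_c$ — smooth and decaying like $\la\zeta\ra^{-3-\gamma}$ since $\gamma>-3$ and $\phi$ has compact support — and the $\sigma$-integral of $b(\cos\theta)(v'-v)$. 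As usual for commutators, the key structural point is that the ``cutoff-type gain term'' — the contribution not containing a genuine mollifier difference — cancels between the two halves of $\mathcal I_\delta$, so that no loss of order survives; this parallels Proposition \ref{prop2.9_amuxy3}, but one must now confront the genuine singularity of $\Phi_c$ at $v=v_*$.

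Next I would insert the pointwise estimate \eqref{M-inequality} of Lemma \ref{basic-inequality} for $|M_\lambda^\delta(\xi)-M_\lambda^\delta(\eta)|$ and split the integration into the three frequency regimes of \eqref{equivalence-relation-spatiall-homo} — $\la\xi-\eta\ra\gtrsim|\xi|$, the intermediate annulus $\sqrt2|\xi|>\la\xi-\eta\ra\ge|\xi|/2$, and the regime where $\xi-\eta$ is small — and, in the $\sigma$-integral, separate the grazing part $\theta\to0$, where $|v'-v|\sim|v-v_*|\sin(\theta/2)$ is small, from the bounded part. In each piece the plan is: extract $M_\lambda^\delta(\eta)\hat g(\eta)=\widehat{M_\lambda^\delta(D_v)g}(\eta)$ so that it folds into $\|M_\lambda^\delta(D_v)g\|_{H^{s'}}$, pull out the weights $\la\eta\ra^{s'}$ and $\la\xi\ra^{s'}$, and close by a Cauchy--Schwarz distributed symmetrically between the $g$- and $h$-frequencies, using the Jacobian of the $\sigma$-dependent change of frequency variables; what is then needed is the convergence of a $\sigma$-integral of $b(\cos\theta)$ against the powers of $\sin(\theta/2)$ coming from $(v'-v)$, from the sphere-symmetry cancellation of the first-order term, and from the mollifier difference. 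The $f$-factor is absorbed by $\|f\|_{L^1}$ when $s'+\lambda<3/2$; when $s'+\lambda\ge3/2$ a Sobolev embedding must be used at high frequency because $L^1(\RR^3)\hookrightarrow H^{-3/2-\varepsilon}$ is then unavailable, which is what produces the extra term $\|f\|_{H^{(\lambda+s'-3)^+}}$. The dichotomy $s<1/2$ versus $s\ge1/2$, i.e. the constraint $2s'\ge(2s-1)^+$, reflects how many orders of the angular singularity can be absorbed by the mollifier difference and the first-order cancellation alone, before one must also spend genuine fractional regularity $s'$ on $g$ and $h$.

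I expect the main obstacle to be keeping the estimate uniform in $\delta$ precisely in the intermediate frequency range and in the tail of the $\Phi_c$-convolution, where $M_\lambda^\delta(\xi)-M_\lambda^\delta(\eta)$ cannot be made small and one is reduced to controlling a polynomial growth — governed by the order deficit $N_0-\lambda$ — against the \emph{fixed} decay $\la\zeta\ra^{-3-\gamma}$ of the Fourier transform of $\Phi_c$ and the available powers of $\sin(\theta/2)$ from the collision geometry and the kinetic factor $|v-v_*|^\gamma\phi(|v-v_*|)$. Balancing these three budgets (derivative count of the mollifier, decay of $\widehat{\Phi_c}$, gain from the angular integral) is exactly what forces the condition \eqref{restriction}, namely $2(N_0-\lambda)\le5+\gamma$; the hypotheses $\gamma+2s>0$ and $\gamma+2s'>0$ are what guarantee the $v_*$-integral near $v=v_*$ converges once one angular order has been used. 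Finally, when $s>1/2$ and $\gamma>-1$ one may instead carry the Taylor expansion of $\psi'-\psi$ to second order — the extra factor $|v-v_*|^{\gamma+1}$ it creates is still locally integrable because $\gamma>-1$ — which buys the additional amount $2s-1>0$ in the angular integral and relaxes the condition to \eqref{slight}, $2(N_0-\lambda)<4+\gamma+2s$.
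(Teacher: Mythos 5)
Your high-level plan is recognisably the same as the paper's: pass to the Bobylev-type Fourier representation (as in the Appendix of \cite{advw}), so that the commutator produces the factor $M_\lambda^\delta(\xi)-M_\lambda^\delta(\xi-\xi_*)$ automatically; estimate this difference by Lemma \ref{basic-inequality}; split according to the frequency regimes \eqref{equivalence-relation-spatiall-homo}; close each piece by a Cauchy--Schwarz between the $g$- and $h$-frequencies, using a $\sigma$-dependent change of variables with controlled Jacobian; and use a Sobolev embedding to produce the extra term $\|f\|_{H^{(\lambda+s'-3)^+}}$ when $s'+\lambda\ge 3/2$. Those are all the right ingredients, and your account of why \eqref{restriction} is the resulting balance condition (mollifier-difference growth versus $\la\cdot\ra^{-3-\gamma}$ decay of $\widehat{\Phi_c}$ versus the angular gain) is in the right spirit.

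Two points in your sketch, however, diverge from the actual proof and would cause trouble if followed literally. First, the paper does \emph{not} Taylor-expand $\psi'-\psi$ in physical space; it expands $\hat\Phi_c(\xi_*-\xi^-)-\hat\Phi_c(\xi_*)$ in the Fourier variable $\xi^-$, and this is only done on the region $\cA_1=\{|\xi^-|\le\tfrac12\la\xi_*\ra\}$, where $\xi^-$ is genuinely small relative to $\xi_*$. On that region the $\SS^2$-symmetry kills the antisymmetric part of the first-order term, and the remaining first- and second-order pieces give kernels $K,\tilde K$ with the decay $\la\xi_*\ra^{-(3+\gamma)}$ times the expected ratios of $\la\xi\ra/\la\xi_*\ra$. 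The complementary region $\cA_2=\{|\xi^-|\ge\tfrac12\la\xi_*\ra\}$ admits no useful Taylor expansion and requires a separate, more delicate argument: one checks $\sqrt2|\xi|\ge\la\xi_*\ra$ there, separates the subtracted term $\hat\Phi_c(\xi_*)$ (handled like $A_2,A_3$) from $\hat\Phi_c(\xi_*-\xi^-)$, reduces to $|\xi_*\cdot\xi^-|\ge\tfrac12|\xi^-|^2$, and changes variables $(\xi,\xi_*)\to(\xi^+,u)$ with $u=\xi_*-\xi^-$, using the Jacobian bound $\ge 1/8$. Your ``grazing $\theta\to0$ versus bounded $\theta$'' description is a physical-space heuristic that does not coincide with the Fourier split $|\xi^-|\lessgtr\tfrac12\la\xi_*\ra$, and it hides this entire $\cA_2$ component.

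Second, your explanation of the relaxation to \eqref{slight} for $s>1/2,\ \gamma>-1$ is incorrect. The paper already uses a second-order Taylor expansion of $\hat\Phi_c$ in all cases (this is $\cA_{1,2}$); the improvement is \emph{not} a higher-order expansion, and the exponent you cite, $|v-v_*|^{\gamma+1}$, does not appear. What actually happens is a rebalancing purely inside the estimate of $A_1$: since $s'>1/2$, one may shave a factor $\la\xi\ra^{-(2s'-1)}$ off the weight, at the price of weakening the $\xi_*$-decay from $\la\xi_*\ra^{-(3+\gamma+2s')}$ to $\la\xi_*\ra^{-(3+\gamma+1)}$; convergence of $\int\la\xi_*\ra^{-(3+\gamma+1)}d\xi_*$ is exactly where $\gamma>-1$ enters, and the remaining exponent inequality $3+\gamma+(2s'-1)-2(p-1)>0$ gives \eqref{slight}. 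Note also that the restriction \eqref{restriction} (resp. \eqref{slight}) is needed only for $A_1$; the terms $A_2,A_3,B_1,B_2,B_3$ impose no constraint of this kind, a fact your budget metaphor glosses over.
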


\begin{proof}
For the proof  we shall follow some of arguments from \cite{amuxy4-1}. By using the formula from
the Appendix of \cite{advw}, we have
\begin{align*}
( Q_c(f, g), h ) =& \iiint_{ \RR^3\times\RR^3\times\SS^2} b \Big({\xi\over{ | \xi |}} \cdot \sigma \Big) [ \hat\Phi_c (\xi_* - \xi^- ) - \hat \Phi_c (\xi_* ) ] \\
& \qquad \qquad  \times \hat f (\xi_* ) \hat g(\xi - \xi_* ) \overline{{\hat h} (\xi )} d\xi d\xi_*d\sigma \,,
\end{align*}
where $\xi^-=\frac 1 2 (\xi-|\xi|\sigma)$.
Therefore
\begin{align*}
 \Big (M_\lambda^\delta(D) \, Q_c (f,  g)& -  Q_c (f,  M_\lambda^\delta(D)\, g)  , h \Big ) \\
= &\iiint b \Big({\xi\over{ | \xi |}} \cdot \sigma \Big) [ \hat\Phi_c (\xi_* - \xi^- ) - \hat \Phi_c (\xi_* ) ] \\
&\quad \times
\Big(M_\lambda^\delta (\xi) - M_\lambda^\delta (\xi-\xi_*)\Big)
\hat f (\xi_* ) \hat g(\xi - \xi_* ) \overline{{\hat h} (\xi )} d\xi d\xi_*d\sigma \\
= & \iiint_{ | \xi^- | \leq {1\over 2} \la \xi_*\ra }  \cdots\,\, d\xi d\xi_*d\sigma
+ \iiint_{ | \xi^- | \geq {1\over 2} \la \xi_*\ra } \cdots\,\, d\xi d\xi_*d\sigma \,\\
=& \cA_1(f,g,h)  +  \cA_2(f,g,h) \,\,.
\end{align*}
Then, we write $\cA_2(f,g,h)$ as
\begin{align*}
\cA_2 &=  \iiint b \Big({\xi\over{ | \xi |}} \cdot \sigma \Big) {\bf 1}_{ | \xi^- | \ge {1\over 2}\la \xi_*\ra }
\hat\Phi_c (\xi_* - \xi^- ) \cdots 
d\xi d\xi_*d\sigma \\
&\quad - \iiint b \Big({\xi\over{ | \xi |}} \cdot
 \sigma \Big){\bf 1}_{ | \xi^- | \ge {1\over 2}\la \xi_*\ra } \hat \Phi_c (\xi_* )
\cdots 
 d\xi d\xi_*d\sigma \\
&= \cA_{2,1}(f,g,h) - \cA_{2,2}(f,g,h)\,.
\end{align*}
On the other hand,  for $\cA_1$ we use the Taylor expansion of $\hat \Phi_c$ of
order $2$ to have
$$
\cA_1 = \cA_{1,1} (f,g,h) +\cA_{1,2} (f,g,h),
$$
where
\begin{align*}
\cA_{1,1} &= \iiint b\,\, \xi^-\cdot (\nabla\hat\Phi_c)( \xi_*)
{\bf 1}_{ | \xi^- | \leq {1\over 2} \la \xi_*\ra }
\Big(M_\lambda^\delta (\xi) - M_\lambda^\delta (\xi-\xi_*)\Big)\\
& \qquad \times
\hat f (\xi_* ) \hat g(\xi - \xi_* ) \bar{\hat h} (\xi ) d\xi d\xi_*d\sigma,
\end{align*}
and $\cA_{1,2} (f,g,h)$ is the remaining term corresponding to the second order term in the Taylor expansion of $\hat\Phi_c$.

We first consider $\cA_{1,1}$.
By writing
\[
\xi^- = \frac{|\xi|}{2}\left(\Big(\frac{\xi}{|\xi|}\cdot \sigma\Big)\frac{\xi }{|\xi|}-\sigma\right)
+ \left(1- \Big(\frac{\xi}{|\xi|}\cdot \sigma\Big)\right)\frac{\xi}{2},
\]
we see that the integral corresponding to the first term on the right hand side vanishes because of the symmetry
on $\SS^2$.
Hence, we have
\[
\cA_{1,1}= \iint_{\RR^6} K(\xi, \xi_*) \Big(M_\lambda^\delta (\xi) - M_\lambda^\delta (\xi-\xi_*)\Big)
\hat f (\xi_* ) \hat g(\xi - \xi_* ) \bar{\hat h} (\xi ) d\xi d\xi_* \,,
\]
where
\[
K(\xi,\xi_*) = \int_{\SS^2}
 b \Big({\xi\over{ | \xi |}} \cdot \sigma \Big)
\left(1- \Big(\frac{\xi}{|\xi|}\cdot \sigma\Big)\right)\frac{\xi}{2}\cdot
(\nabla\hat\Phi_c)( \xi_*)
{\bf 1}_{ | \xi^- | \leq {1\over 2} \la \xi_*\ra } d \sigma \,.
\]
Note that $| \nabla \hat \Phi_c (\xi_*) | \lesssim \frac{1}{\la
\xi_*\ra^{3+\gamma +1}}$, from the Appendix of \cite{amuxy4-1}. If $\sqrt 2
|\xi| \leq \la \xi_* \ra$, then $\sin(\theta/2) $ $| \xi|= |\xi^-| \leq \la \xi_* \ra/2$
because $0 \leq \theta \leq \pi/2$, and we have
\begin{align*}
|K(\xi,\xi_*)| &\lesssim \int_0^{\pi/2} \theta^{1-2s} d \theta\frac{ \la \xi\ra}{\la \xi_*\ra^{3+\gamma +1}}
\lesssim \frac{1  }{\la \xi_*\ra^{3+\gamma}}\left(
\frac{\la \xi \ra}{\la \xi_*\ra}\right) \,.
\end{align*}
On the other hand, if $\sqrt 2 |\xi| \geq \la \xi_* \ra$, then
\begin{align*}|K(\xi,\xi_*)| &\lesssim \int_0^{\pi\la \xi_*\ra /(2|\xi|)} \theta^{1-2s} d \theta\frac{ \la \xi\ra}{\la \xi_*\ra^{3+\gamma +1}}
\lesssim \frac{1  }{\la \xi_*\ra^{3+\gamma}}\left(
\frac{\la \xi \ra}{\la \xi_*\ra}\right)^{2s-1}\,.
\end{align*}
Hence we obtain
\begin{align}\label{later-use1}
|K(\xi,\xi_*)| &\lesssim \frac{1  }{\la \xi_*\ra^{3+\gamma}}\left\{
\left( \frac{\la \xi \ra}{\la \xi_*\ra}\right){\bf 1}_{ \la \xi_*
\ra \geq
\sqrt 2 |\xi| } \right.\\
&\qquad\left.+{\bf 1}_{ \sqrt 2 |\xi| \geq  \la \xi_* \ra \geq
|\xi|/2} + \left( \frac{\la \xi \ra}{\la \xi_*\ra}\right)^{2s-1}
{\bf 1}_{ |\xi|/2 \ge \la \xi_* \ra }\right\}\,. \notag
\end{align}
Similar to $\cA_{1,1}$,
we  can also write
\[
\cA_{1,2}= \iint_{\RR^6} \tilde K(\xi, \xi_*) \Big(M_\lambda^\delta (\xi) - M_\lambda^\delta (\xi-\xi_*)\Big)
\hat f (\xi_* ) \hat g(\xi - \xi_* ) \bar{\hat h} (\xi ) d\xi d\xi_* \,,
\]
where
\[
\tilde K(\xi,\xi_*) = \int_{\SS^2}
 b \Big({\xi\over{ | \xi |}} \cdot \sigma \Big)
\int^1_0(1-\tau)  (\nabla^2\hat \Phi_c) (\xi_* -\tau\xi^- ) \cdot\xi^- \cdot\xi^-
{\bf 1}_{ | \xi^- | \leq {1\over 2} \la \xi_*\ra } d\tau  d \sigma\,.
\]
Again from the Appendix of \cite{amuxy4-1}, we have
$$
| (\nabla^2\hat \Phi_c) (\xi_* -\tau\xi^- ) | \lesssim {1\over{\la  \xi_* -\tau \xi^-\ra^{3+\gamma +2}}}
\lesssim
 {1\over{\la \xi_*\ra^{3+\gamma +2}}},
$$
because $|\xi^-| \leq \la \xi_*\ra/2$, which leads to
\begin{align}\label{later-use2}
|\tilde K(\xi,\xi_*)| &\lesssim \frac{1  }{\la
\xi_*\ra^{3+\gamma}}\left\{ \left( \frac{\la \xi \ra}{\la
\xi_*\ra}\right)^2 {\bf 1}_{ \la \xi_* \ra \geq \sqrt 2 |\xi|
}\right.\\
&\qquad\left. +{\bf 1}_{ \sqrt 2 |\xi| \geq  \la \xi_* \ra \geq
|\xi|/2} + \left( \frac{\la \xi \ra}{\la \xi_*\ra}\right)^{2s} {\bf
1}_{ |\xi|/2 \ge \la \xi_* \ra }\right\}\,. \notag
\end{align}
It follows from  \eqref{M-inequality} of Lemma \ref{basic-inequality}, \eqref{later-use1} and \eqref{later-use2} that
if $p = N_0- \lambda$, then
$$
|\cA_{1}|\lesssim  |\cA_{1,1}| + |\cA_{1,2}|
\lesssim A_1+ A_2 + A_3,
$$
where
\begin{equation}\label{A_1}
A_1=\iint_{\RR^6}
\left |\frac{\hat f(\xi_*)}{\la \xi_*\ra^{3+\gamma}}\right | \left |M_\lambda^\delta (\xi-\xi_*)\hat g(\xi -\xi_*)\right | |\hat h(\xi)|\left(
\frac{\la \xi_* \ra}{\la \xi\ra}\right)^{p-1} {\bf 1}_{ \la \xi_* \ra \geq \sqrt 2 |\xi| }d\xi_* d\xi\, ,
\end{equation}
and
\begin{align*}
A_2=&
\iint_{\RR^6}
\left |\frac{\hat f(\xi_*)}{\la \xi_*\ra^{3+\gamma}}\right | \left |M_\lambda^\delta (\xi-\xi_*)\hat g(\xi -\xi_*)\right | |\hat h(\xi)|\\
&\times\left( \frac{M_\lambda^\delta (\xi_*)\Big(1 + \big(\delta \la \xi - \xi_*\ra\big)^{N_0}\Big)}{\la \xi -\xi_*\ra^\lambda}
+1 \right) {\bf 1}_{   \sqrt 2 |\xi| >\la \xi_* \ra \ge   |\xi|/2  }d\xi_*d\xi\, ;\\
A_3=&\iint_{\RR^6}
\left |\frac{\hat f(\xi_*)}{\la \xi_*\ra^{3+\gamma}}\right | \left |M_\lambda^\delta (\xi-\xi_*)\hat g(\xi -\xi_*)\right | |\hat h(\xi)|
\left(  \frac{\la \xi\ra}{\la \xi_* \ra } \right)^{2s-1}  {\bf 1}_{|\xi|/2>\la \xi_* \ra } d\xi_* d\xi\, .
\end{align*}
Putting $\hat G(\xi) = \la \xi \ra^{s'} M_\lambda^\delta (\xi) \hat g(\xi)$ and $\hat H(\xi) = \la \xi \ra^{s'}\hat h(\xi)$,  then
we have
\begin{align*}
&\left| A_1\right|^2 \lesssim \|\hat f \|^2_{L^\infty}
\left(\int_{\RR^3} \frac{d\xi_*}{ \la \xi_*\ra^{3+ \gamma +2s'}}
\int_{\RR^3_\xi} |\hat H(\xi)|^2d\xi\right)\\
& \times \left(\int_{\RR^3}  \frac{d\xi}{ \la \xi \ra^{3+ \gamma +2s'}}
\int_{\RR^3} \left(\frac{\la \xi\ra}{\la \xi_*\ra}\right)^{3+\gamma -2(p-1)}
{\bf 1}_{ \la \xi_* \ra \geq \sqrt 2 |\xi| } |\hat G ( \xi -\xi_*)|^2 d\xi_*
\right)\notag \\
& \qquad
\lesssim
\|f\|_{L^1}^2 \|M_\lambda^\delta g\|_{H^{s'}}^2 \|h\|_{H^{s'}}^2\,, \notag
\end{align*}
because  $\gamma + 2s' >0$, and $3+\gamma -2(p-1) \ge 0$ from \eqref{restriction}.
Here we have used the fact that $\la \xi_* \ra \sim \la \xi-\xi_*\ra$ if $\la \xi_* \ra \geq \sqrt 2 |\xi|$.

We consider the case  $ s >1/2, \gamma >-1$.
For  $s> s' >1/2$ we have
\begin{align*}
&\left| A_1\right|^2 \lesssim \|\hat f \|^2_{L^\infty}
\left(\int_{\RR^3} \frac{d\xi_*}{ \la \xi_*\ra^{3+ \gamma +1}}
\int_{\RR^3_\xi} |\hat H(\xi)|^2d\xi\right)\\
& \times \left(\int_{\RR^3}  \frac{d\xi}{ \la \xi \ra^{3+ \gamma +1}}
\int_{\RR^3} \left(\frac{\la \xi\ra}{\la \xi_*\ra}\right)^{3+\gamma +(2s'-1)-2(p-1)}
\frac{{\bf 1}_{ \la \xi_* \ra \geq \sqrt 2 |\xi| }}{\la \xi \ra^{2(2s'-1)}}
 |\hat G ( \xi -\xi_*)|^2 d\xi_*
\right)\notag \\
& \qquad
\lesssim
\|f\|_{L^1}^2 \|M_\lambda^\delta g\|_{H^{s'}}^2 \|h\|_{H^{s'}}^2\,, \notag
\end{align*}
if $3+ \gamma +(2s'-1)-2(p-1) >0$.
Thus \eqref{restriction} can be relaxed to \eqref{slight} to get the desired estimate for $A_1$. Here we remark that \eqref{restriction} or \eqref{slight} are only required to estimate the part $A_1$.

Noting the third formula of \eqref{equivalence-relation-spatiall-homo}, we get
\begin{align*}
\left| A_2 \right|^2 \lesssim&
\left \{\int_{\RR^3}\frac{|\hat f(\xi_*)|^2 d\xi_*}{
\la \xi_*\ra^{6+2\gamma +2s'}}
\int_{\la \xi -\xi_*\ra \lesssim \la \xi_* \ra} \left(
\frac{\la \xi_* \ra^{2\lambda}}{\la \xi- \xi_* \ra^{2(\lambda+s')}}\right. \right.\\
&\quad \left. \left. +    \frac{\la \xi_* \ra^{2(\lambda-N_0)}}{\la \xi- \xi_* \ra^{2(\lambda-N_0 +s')}}        +
\frac{1}{\la \xi -\xi_* \ra^{2s'}} \right) d\xi \right\} \notag \\
&\quad \times \left( \iint_{\RR^6}|\hat G ( \xi -\xi_*)|^2 |\hat H(\xi)|^2 d\xi d\xi_* \right) \,.\notag
\end{align*}
If $\lambda + s' <3/2$, then
\begin{align*}
\left| A_2 \right|^2& \lesssim
\int_{\RR^3}\frac{|\hat f(\xi_*)|^2} {
\la \xi_*\ra^{3+2(\gamma +2s')}}
d\xi_* \|M_\lambda^\delta g\|_{H^{s'}}^2 \|h\|_{H^{s'}}^2\\
&\lesssim
\|f\|_{L^1}^2 \|M_\lambda^\delta g\|_{H^{s'}}^2 \|h\|_{H^{s'}}^2\,.
\end{align*}
If $\lambda + s' \ge 3/2$, then
\begin{align*}
\left| A_2 \right|^2& \lesssim
\int_{\RR^3}\frac{|\hat f(\xi_*)|^2 \la \xi_* \ra^{2(\lambda+ s' + \varepsilon)} } {
\la \xi_*\ra^{6+2(\gamma +2s')}}
d\xi_* \|M_\lambda^\delta g\|_{H^{s'}}^2
\|h\|_{H^{s'}}^2 \\
&\lesssim
\|f\|_{H^{\lambda +s' -3}}^2 \|M_\lambda^\delta g\|_{H^{s'}}^2 \|h\|_{H^{s'}}^2\,.
\end{align*}

Since  $2s' \ge 2s-1$ and $\gamma + 2s'>0$, we have
\begin{align*}
&\left| A_3 \right|^2 \lesssim \|\hat f \|^2_{L^\infty}
\left(\int_{\RR^3} \frac{d\xi_*}{ \la \xi_*\ra^{3+ \gamma + 2s'}}
\int_{\RR^3} |\hat H(\xi)|^2d\xi\right)\\
& \times \left(\int_{\RR^3}  \frac{d\xi_*}{ \la \xi_* \ra^{3+ \gamma +2s'}}
\int_{\RR^3} \left(\frac{\la \xi_*\ra}{\la \xi \ra}\right)^{2\{2s'-(2s-1)\}}
{\bf 1}_{ |\xi|/2 \ge \la \xi_* \ra } |\hat G ( \xi -\xi_*)|^2 d\xi
\right)\notag \\
& \qquad
\lesssim
\|f\|_{L^1}^2 \|M_\lambda^\delta g\|_{H^{s'}}^2 \|h\|_{H^{s'}}^2\, . \notag
\end{align*}
The above four estimates yield the desired estimate for $\cA_1(f,g,h)$.

Next we consider $\cA_2(f,g,h) = \cA_{2,1}(f,g,h) -
\cA_{2,2}(f,g,h)$. The fact that $|\xi^-|= |\xi| \sin(\theta/2) \geq
\la \xi_*\ra/2$ and $\theta \in [0,\pi/2]$ imply $\sqrt 2 |\xi| \geq
\la \xi_*\ra$. Write
\[
\cA_{2,j}= \iint_{\RR^6} K_j(\xi, \xi_*) \Big(M_\lambda^\delta (\xi) - M_\lambda^\delta (\xi-\xi_*)\Big)
\hat f (\xi_* ) \hat g(\xi - \xi_* ) \bar{\hat h} (\xi ) d\xi d\xi_* \,.
\]
Then we have
\begin{align*}
&|K_2(\xi, \xi_*)| = \left|\int  b \Big({\xi\over{ | \xi |}} \cdot \sigma \Big)\hat \Phi_c(\xi_*) {\bf 1}_{ | \xi^- | \ge {1\over 2}\la \xi_*\ra } d\sigma\right|\\
& \lesssim  {1\over{\la \xi_* \ra^{3+\gamma }}} \frac{\la  \xi\ra^{2s} }{\la \xi_*\ra^{2s}}{\bf 1}_{\sqrt 2 |\xi| \geq \la \xi_* \ra} \notag \\
 &    \lesssim
\frac{1  }{\la \xi_*\ra^{3+\gamma}}\left\{
{\bf 1}_{ \sqrt 2 |\xi| \geq  \la \xi_* \ra \geq |\xi|/2}
+
\left(
\frac{\la \xi \ra}{\la \xi_*\ra}\right)^{2s}
{\bf 1}_{ |\xi|/2 \ge \la \xi_* \ra }\right\}  \notag \,,
\end{align*}
which shows the desired estimate for $\cA_{2,2}$, by  exactly the same way as
the estimation on $A_2$ and $A_3$.

As for $\cA_{2,1}$, it suffices to work under the condition
 $|\xi_* \cdot \xi^-| \ge \frac1 2 |\xi^-|^2$.
In fact, on the complement of this
set, we have
 $|\xi_* -\xi^-| > | \xi_*|$, and $\hat \Phi_c(\xi_*-\xi^-)$ is
the  the same as $\hat \Phi_c(\xi_*)$.
Therefore, we consider $\cA_{2,1,p}$ which is defined by replacing $K_1(\xi, \xi_*)$ by
\[
K_{1,p}(\xi,\xi_*) = \int_{\SS^2}
 b \Big({\xi\over{ | \xi |}} \cdot \sigma \Big)
\hat \Phi_c ( \xi_*-\xi^-)
{\bf 1}_{ | \xi^- | \geq {1\over 2} \la \xi_*\ra }{\bf 1}_{| \xi_* \,\cdot\,\xi^-| \ge {1\over 2} | \xi^-|^2} d \sigma \,.
\]
By noting
\[
{\bf 1}= {\bf 1}_{\la \xi_* \ra \geq |\xi|/2} {\bf 1}_{\la\xi-\xi_* \ra \leq{2}\la \xi_* - \xi^- \ra}
+  {\bf 1}_{\la \xi_* \ra \geq |\xi|/2} {\bf 1}_{\la\xi-\xi_* \ra > {2}\la \xi_* - \xi^-\ra}
+  {\bf 1}_{\la \xi_* \ra < |\xi|/2},
\]
we decompose respectively
\begin{align*}
\cA_{2,1,p}
=
B_1+ B_2 +B_3\,.
\end{align*}
On the sets for
above integrals, we have $\la \xi_* -\xi^- \ra \lesssim \,
\la \xi_* \ra$, because $| \xi^- | \lesssim | \xi_*|$
that follows from  $| \xi^-|^2 \le 2 | \xi_* \cdot\xi ^-| \lesssim |\xi^-|\, | \xi_*|$.
Furthermore, on the sets for $B_1$ and $B_2$  we have $\la \xi \ra \sim \la \xi_* \ra$,
so that
$\la \xi_* -\xi^- \ra \lesssim \ \la \xi \ra$ and $b\,\, {\bf 1}_{ | \xi^- | \ge {1\over 2} \la \xi_*\ra } {\bf 1}_{\la \xi_* \ra \geq |\xi|/2}$
is bounded.
Putting again
 $\hat G(\xi) = \la \xi \ra^{s'} M_\lambda^\delta (\xi) \hat g(\xi)$ and $\hat H(\xi) = \la \xi \ra^{s'}\hat h(\xi)$,
by Lemma \ref{basic-inequality} we have
\begin{align*}
|B_1| ^2
\lesssim& \left[\iiint
\left
|\frac{\hat \Phi_c (\xi_* - \xi^-)}{\la \xi_* - \xi^-\ra^{s'}} \right|^2  | \hat f (\xi_* )|^2  \right.\\
&\quad \times
\left\{M^\delta_\lambda (\xi_*)^2
\left( \frac{{\bf 1}_{ \la \xi -\xi *\ra \lesssim \la \xi_{ *}-\xi^-  \ra}}{
\la \xi-\xi_*\ra^{2(s' +\lambda)} }   +
 \frac{ \delta^{2N_0} {\bf 1}_{ \la \xi -\xi *\ra \lesssim \la \xi_{ *}-\xi^-  \ra}}{
\la \xi-\xi_*\ra^{2(s'+\lambda-N_0) } } \right) \right.\\
&\quad \left. + \left.
\frac{{\bf 1}_{ \la \xi -\xi *\ra \lesssim \la \xi_{ *} -\xi^- \ra}}{
\la \xi-\xi_*\ra^{2s'} } \right \}
d\xi d\xi_* d \sigma \right] \left(\iiint  |    \hat G(\xi - \xi_* )|^2 |{\hat H} (\xi ) |^2 d\sigma d\xi d\xi_*\right) \,.
\end{align*}
Noting  that  $\la \xi_*\ra \sim \la \xi \ra \sim \la \xi^+ \ra \lesssim \la \xi^+ -u\ra + \la u\ra$ with
$u = \xi_* - \xi^-$, and moreover $\la u \ra \lesssim \la \xi_*\ra$,
we see that if  $\lambda \ge 0$ then
\[
M^\delta_\lambda (\xi_*)^2 \lesssim \frac{
\la \xi^+ -u\ra^{2\lambda } + \la u\ra^{2\lambda}}{(1+ \delta \la u \ra)^{2N_0}}\,.
\]
This is true even if $\lambda <0$. Therefore,
if $s'+\lambda <3/2$ we have
\begin{align*}
&|B_1 | ^2
\lesssim  \|f\|_{L^1}^2 \int
{\la u \ra^{-( 6 +2\gamma+2s')}} \\
& \times \Big\{  \int_{\la \xi^+ -u \ra \leq \la u \ra} \frac{( \la
\xi^+ -u\ra^{2s' } + \la u\ra^{2\lambda})}{(1+ \delta \la u
\ra)^{2N_0}} \Big( \frac{1}{ \la \xi^+-u\ra^{2(s' +\lambda)} } +
 \frac{ \delta^{2N_0}}{
\la \xi^+-u\ra^{2(s'+\lambda-N_0) } } \Big) d\xi^+ \\
& +  \int_{\la \xi^+ -u \ra \leq \la u \ra} \frac{d\xi^+}{ \la
\xi^+-u\ra^{2s'} } \Big \} du \,\,
 \|M^\delta _\lambda(D)g\|^2_{H^{s'}} \|h\|_{H^{s'}}^2\\
& \lesssim \|f\|^2_{L^1}  \|M_\lambda^\delta (D)g\|^2_{H^{s'}}
\|h\|_{H^{s'}}^2 \int\frac{du}{\la u \ra^{ 3 +2(\gamma+ 2s')} }\,.
\end{align*}
Here we have used the change of variables
$(\xi, \xi_*) \rightarrow (\xi^+, u)$ whose Jacobian is
\begin{align*}
&\Big|\frac{\partial(\xi^+,u)}{\partial (\xi, \xi_*)}\Big|=\Big|\frac{\partial \xi^+}{\partial \xi} \Big|=\frac{ \Big|I+ \frac{\xi}{|\xi|}\otimes
\sigma\Big|} {8}\\
& =\frac{|1+ \frac{\xi}{|\xi|}\cdot\sigma|}{8}=\frac{\cos^2
(\theta/2)}{4}\ge \frac{1}{8}, \qquad \theta\in [0,\frac{\pi}{2}].  \notag
\end{align*}
If  $s' + \lambda \ge 3/2$, in view of  $\gamma + 2s' >0$ we have
\begin{align*}
&|B_1 | ^2
\lesssim  \int | \hat f(\xi_*)|^2 \left\{
{\la u \ra^{2\lambda-( 6 +2\gamma +2s')}}  \log\la u \ra
\right
\} d\xi_*\,\,
 \|M^\delta _\lambda(D)g\|^2_{H^{s'}} \|h\|_{H^{s'}}^2\\
&\lesssim
\|f\|^2_{H^{(\lambda+s'-3)^+}}  \|M^\delta_\lambda(D)g\|^2_{H^{s'}} \|h\|_{H^{s'}}^2,
\end{align*}
because $\la u\ra \lesssim \la \xi_*\ra$ on the set of the
 integral.

As for $B_2$, we first note that, on the set of the integral,
$\xi^+ = \xi-\xi_* +u$ implies
\[ \frac{\la \xi -\xi_*\ra}{2} \le  \la \xi -\xi_*  \ra - |u| \le \la \xi^+ \ra
\le \la \xi - \xi_*\ra +|u| \lesssim   \la \xi - \xi_*\ra \,,\]
so that
\[(
\enskip M^\delta _\lambda(\xi) \sim \enskip ) \enskip M^\delta_\lambda(\xi^+) \sim
M^\delta_\lambda (\xi-\xi_*)\,.
\]
 and hence we have by the Cauchy-Schwarz inequality
\begin{align*}
|B_2| ^2  \lesssim& \|f\|^2_{L^1} \iiint \frac{
|\hat \Phi_c (\xi_* - \xi^-) |   }
{\la \xi_* -\xi^- \ra^{2s'}}|\hat G(\xi -\xi_*)|^2 d \sigma d\xi d\xi_* \\
& \qquad \times \iiint \frac{
|\hat \Phi_c (\xi_* - \xi^-) |   }
{\la \xi_* -\xi^- \ra^{2s'}} |{ \hat H} (\xi ) |^2 d\sigma d\xi d\xi_*\\
\lesssim &
\|f\|^2_{L^1}  \|M^\delta_\lambda(D)g\|^2_{H^{s'}} \|h\|_{H^{s'}}^2\,,
\end{align*}
because   $\gamma + 2s'>0$.

On the  set of integral for  $B_3$ we recall  $\la \xi \ra \sim \la \xi - \xi_*\ra$ and
\[
|M^\delta_\lambda(\xi) -M^\delta_\lambda(\xi-\xi_*) |\lesssim \frac{\la \xi_*\ra}{\la \xi \ra} M^\delta_\lambda(\xi -\xi_*)\,,
\]
so that
\begin{align*}
|B_3 | ^2  \lesssim& \|f\|_{L^1}^2\iiint b\,\, {\bf 1}_{ | \xi^- | \ge {1\over 2} \la \xi_*\ra }\frac{
|\hat \Phi_c (\xi_* - \xi^-) | \la \xi_* \ra  }
{\la \xi\ra^{2s'+1} } |\hat G(\xi -\xi_*)|^2 d \sigma d\xi d\xi_* \\
& \times \iiint b\,\, {\bf 1}_{ | \xi^- | \ge {1\over 2} \la \xi_*\ra }\frac{
|\hat \Phi_c (\xi_* - \xi^-) | \la \xi_* \ra }
{\la \xi\ra^{2s'+1} } |{\hat H} (\xi ) |^2 d\sigma d\xi d\xi_*\,.
\end{align*}
We use the change of variables  $\xi_*  \rightarrow u= \xi_* -\xi^-$.
Note that $| \xi ^-| \ge {1\over 2} \la u +\xi^-\ra $ implies  $|\xi^-| \geq \la u\ra/\sqrt {10}$,
and that
\[
\la \xi_* \ra \lesssim \la \xi_* - \xi^- \ra + |\xi| \sin \theta/2\,.
\]
Then we have
\begin{align*}
&
\iint  b\,\, {\bf 1}_{ | \xi^- | \ge {1\over 2} \la \xi_*\ra }  \frac{
|\hat \Phi_c (\xi_* - \xi^-) | \la \xi_* \ra}
{\la \xi\ra^{2s'+1 } }d\sigma d\xi_*
\lesssim
\int
\frac{{\bf 1}_{\la u\ra \lesssim |\xi|}}{\la u \ra^{3+\gamma +2s'}}\left( \frac {\la u \ra}
{\la \xi \ra}\right)^{2s'} \\
&\qquad \qquad \times
\Big( \int  b\, {\bf 1}_{ | \xi^- |  \gtrsim \la u \ra } \frac{\la u \ra}
{\la \xi \ra} d\sigma +
\int  b \sin (\theta/2)  {\bf 1}_{ | \xi^- |  \gtrsim \la u \ra } d\sigma
  \Big)du\,,
\end{align*}
from which we also can obtain the desired bound for $B_3$ if
$\gamma +2s'>0$.
In fact, the first integral on the sphere is bounded above by
$\la u \ra^{1-2s}/ \la \xi \ra^{1-2s}$ and the second integral  has the same bound
when $s >1/2$. On the other hand, the second integral is bounded by a constant
when $s<1/2$ and   by $|\log (\la \xi \ra /\la u \ra)|$  when $s=1/2$.
The proof of 1) and 2) of the proposition is then completed.
\end{proof}

The combination of Proposition \ref{commu-molli} and
Proposition \ref{prop2.9_amuxy3} together with its remark yield the following
theorem.

\begin{theo} \label{commu}
Assume that  $0< s<1, \gamma+2s>0$. Let
$0<s'<s$ satisfy $
\gamma+2s'>0,\, 2s'\geq (2s -1)^+$.
If a pair $(N_0, \lambda)$ satisfies \eqref{restriction}
then we have

\noindent
{\em 1)}  If $s' + \lambda <3/2$,   we have
\begin{align*}
\Big
| \Big (M_\lambda^\delta (D_v) \,& Q(f,  g) - Q(f,  M_\lambda^\delta (D_v)\, g)  , h \Big ) \Big |\\
&\lesssim  \| f\|_{L^1_{\gamma^+ +(2s-1)^+}} || M_\lambda^\delta(D_v)g||_{H^{s'}_{\gamma^+ +(2s-1)^+}} \, || h||_{H^{s'}}\,.
\end{align*}
{\em 2)}  If $s'+\lambda \ge 3/2$,
we have
\begin{align*}
\Big
| \Big (M_\lambda^\delta(D_v) &\, Q (f,  g) - Q (f,  M_\lambda^\delta(D_v)\, g)  , h \Big ) \Big |\\
&\lesssim  \Big(  \| f\|_{L^1_{\gamma^+ +(2s-1)^+}} +  \| f\|_{{ H^{ (\lambda +s'-3)^+}}} \Big)|| M_\lambda^\delta(D_v)g||_{H^{s'}_{\gamma^+ +(2s-1)^+}} \, || h||_{H^{s'}}\,.
\end{align*}
Furthermore, if $s>1/2$ and $\gamma > - 1 $ then the same conclusion as above holds
even when the condition   \eqref{restriction} is  replaced by \eqref{slight}.
When $0<s<1/2$ and $\gamma >0$, we can use
$\|M_\lambda^\delta(D_v)g\|_{H^{s'}_{\gamma/2}} $ $\|h\|_{H^{s'}_{\gamma/2}}$
for the corresponding terms in above estimates with smaller weight in the
variable $v$.
\end{theo}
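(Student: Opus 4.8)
The plan is to assemble Theorem~\ref{commu} from the two commutator estimates already in hand, splitting $Q=Q_c+Q_{\bar c}$ according to $\Phi_\gamma=\Phi_c+\Phi_{\bar c}$ and writing
\[
\big(M_\lambda^\delta(D_v)Q(f,g)-Q(f,M_\lambda^\delta(D_v)g),\,h\big)=I_c+I_{\bar c},
\]
where $I_c$, $I_{\bar c}$ are the corresponding commutators. The hypotheses on $(s,s',\gamma)$ and on the pair $(N_0,\lambda)$ in Theorem~\ref{commu} are exactly those of Proposition~\ref{commu-molli}; since $\|f\|_{L^1}\le\|f\|_{L^1_{\gamma^++(2s-1)^+}}$ and $\|M_\lambda^\delta(D_v)g\|_{H^{s'}}\le\|M_\lambda^\delta(D_v)g\|_{H^{s'}_{\gamma^++(2s-1)^+}}$, the term $I_c$ is bounded directly by the right-hand side claimed, in case~1) when $s'+\lambda<3/2$ and in case~2) when $s'+\lambda\ge 3/2$; the relaxation of \eqref{restriction} to \eqref{slight} for $s>1/2$, $\gamma>-1$ is inherited verbatim, as \eqref{restriction} never enters $I_{\bar c}$.

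For $I_{\bar c}$ I would first check that $M=M_\lambda^\delta$ verifies the hypotheses of Proposition~\ref{prop2.9_amuxy3} uniformly in $\delta\in\,]0,1]$: by \eqref{mollifier} it is a positive symbol of the form $\tilde M(|\xi|^2)$ lying in $S^\lambda_{1,0}$ uniformly in $\delta$, the slowly varying condition holds because $\langle\xi\rangle$ stays within fixed ratio when $|\xi|^2$ does, and the symbol estimate is precisely \eqref{der-es}. Applying Proposition~\ref{prop2.9_amuxy3} (and Remark~\ref{hard-poten} when $\gamma>0$, $0<s<1/2$) then gives
\[
|I_{\bar c}|\lesssim \|f\|_{L^1_w}\Big(\|M_\lambda^\delta(D_v)g\|_{H^{m}_w}+\|g\|_{H^{\lambda-N}_{\gamma^+}}\Big)\|h\|_{L^2},
\]
with $m=0$ for $0<s<1/2$ and $m=2s-1+\varepsilon$ for $1/2\le s<1$ ($\varepsilon>0$ at our disposal, the case $s=1/2$ being identical up to an arbitrarily small loss $\kappa$ in the weight), and with $w$ equal to $\gamma^+$, $(2s+\gamma-1)^+$, or $\gamma/2$ — in every relevant case $w\le\gamma^++(2s-1)^+$ (resp.\ $w\le\gamma/2$ in the hard potential variant). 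It remains to dominate this by the target right-hand side: $\|h\|_{L^2}\le\|h\|_{H^{s'}}$ is trivial, and the residual term is absorbed by taking $N$ large, since $M_\lambda^\delta(\xi)\gtrsim\langle\xi\rangle^{\lambda-N_0}$ uniformly in $\delta\in\,]0,1]$ forces $\|g\|_{H^{\lambda-N}_{\gamma^+}}\lesssim\|M_\lambda^\delta(D_v)g\|_{H^{s'}_{\gamma^+}}$ as soon as $N\ge N_0-s'$ (commuting the weight through $M_\lambda^\delta(D_v)$ only produces strictly lower order contributions, handled the same way).

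The one point requiring care is to match the fractional order $m$ with the target order $s'$ when $1/2<s<1$, where $m=2s-1+\varepsilon$: since $2s-1<s$ and $s'$ is free in its admissible range, one picks $s'$ and then $\varepsilon$ small with $2s-1+\varepsilon\le s'$, so that $\|M_\lambda^\delta(D_v)g\|_{H^{m}_w}\le\|M_\lambda^\delta(D_v)g\|_{H^{s'}_{\gamma^++(2s-1)^+}}$; if $s'$ is forced to the lower end $s-1/2\le s'<2s-1$, the elementary interpolation $\|M_\lambda^\delta(D_v)g\|_{H^{m}}\le\eta\|M_\lambda^\delta(D_v)g\|_{H^{s}}+C_\eta\|M_\lambda^\delta(D_v)g\|_{H^{s'}}$ does the job in the applications, the $\eta$-term being absorbed afterwards by the coercivity of Proposition~\ref{coercive-uniform}. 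Summing $I_c$ and $I_{\bar c}$ yields 1) and 2); the final assertion, with the lighter weight $\gamma/2$ for $\gamma>0$, $0<s<1/2$, follows identically, using Remark~\ref{hard-poten} for $I_{\bar c}$ and the trivial bound of unweighted norms by $\gamma/2$-weighted ones for $I_c$. Thus the main obstacle is not a new estimate but the uniform-in-$\delta$ verification of the symbol conditions together with the consistent tracking of velocity weights and fractional orders across the two input propositions.
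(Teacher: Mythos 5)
Your proposal takes exactly the route the paper itself takes: the paper's proof of Theorem~\ref{commu} consists of the single sentence preceding it, which says it follows by combining Proposition~\ref{commu-molli} (for $Q_c$) with Proposition~\ref{prop2.9_amuxy3} and Remark~\ref{hard-poten} (for $Q_{\bar c}$), and your write-up carries out precisely this combination with the weight comparisons $(2s+\gamma-1)^+\le\gamma^++(2s-1)^+$, the choice of $N$ to absorb the residual $\|g\|_{H^{\lambda-N}}$ term, and the uniform-in-$\delta$ verification of the symbol hypotheses. You are in fact slightly more careful than the paper in noting that for $s>1/2$ the order $2s-1+\varepsilon$ in \eqref{10.8-3} may exceed $s'$ on the lower part $s-1/2\le s'<2s-1$ of the admissible range, and your interpolation-plus-coercivity workaround is the appropriate way to reconcile this in the applications where the theorem is invoked.
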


We recall also the following upper bound estimate, Proposition 2.1 of \cite{amuxy4-3}, where we need the
assumption $\gamma+2s >0$ (see also Theorem 2.1 from \cite{amuxy3}).
\begin{prop}\label{hard-upper}
Let $\gamma+2s >0$ and $0<s<1$. For any $r \in [2s-1,2s]$ and $\ell \in [0, \gamma +2s]$ we have
\begin{align*}
\Big| \Big( Q(f,g),h \Big)_{L^2(\RR^3)} \Big | \lesssim
\|f\|_{L^1_{\gamma+2s} }\|g\|_{H^r_{\gamma+2s -\ell}} \|h\|_{H^{2s-r}_\ell}\,.
\end{align*}
\end{prop}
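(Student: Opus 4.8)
The plan is to split the collision operator via the decomposition $\Phi_\gamma = \Phi_c + \Phi_{\bar c}$ of the kinetic factor introduced above and to estimate the two pieces $(Q_c(f,g),h)$ and $(Q_{\bar c}(f,g),h)$ separately. Since $\Phi_{\bar c}(z) \lesssim \tilde\Phi_\gamma(z) = \la z\ra^\gamma$, the contribution of $Q_{\bar c}$ reduces to the corresponding bound for the regularized kinetic factor, which is available from \cite{amuxy3} (Theorem 2.1); here the angular singularity of $b$ still produces the $2s$ orders of smoothing, but the kinetic factor being regular one only has to distribute the polynomial weight $\la v-v_*\ra^\gamma \lesssim \la v\ra^{|\gamma|}\la v_*\ra^{|\gamma|}$ (trivial when $\gamma\le 0$). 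So the heart of the matter is $Q_c$.

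For $Q_c$ I would use the Bobylev-type Fourier representation already exploited in the proof of Proposition \ref{commu-molli},
\[
( Q_c(f, g), h ) = \iiint b \Big({\textstyle\frac{\xi}{|\xi|}}\cdot\sigma\Big)\big[ \hat\Phi_c (\xi_* - \xi^- ) - \hat \Phi_c (\xi_* ) \big]\hat f (\xi_* ) \hat g(\xi - \xi_* ) \overline{\hat h (\xi )}\, d\xi\, d\xi_*\, d\sigma\,,\qquad \xi^-=\tfrac12(\xi-|\xi|\sigma)\,,
\]
and split the $\sigma$–$\xi_*$ integration into the regions $|\xi^-| \le \frac12\la\xi_*\ra$ and $|\xi^-| \ge \frac12\la\xi_*\ra$. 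On the first region I would Taylor-expand $\hat\Phi_c(\xi_*-\xi^-)$ to second order around $\xi_*$: the zeroth-order term cancels the subtracted $\hat\Phi_c(\xi_*)$; in the first-order term $\xi^-\cdot(\nabla\hat\Phi_c)(\xi_*)$, writing $\xi^- = \frac{|\xi|}{2}\big((\frac{\xi}{|\xi|}\cdot\sigma)\frac{\xi}{|\xi|}-\sigma\big)+(1-\frac{\xi}{|\xi|}\cdot\sigma)\frac{\xi}{2}$, the leading piece drops by oddness of the $\SS^2$ integral, leaving a contribution of size $|\xi|(1-\cos\theta)|\nabla\hat\Phi_c(\xi_*)|$; the second-order remainder is bounded by $|\xi^-|^2\sup|\nabla^2\hat\Phi_c|$. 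Using the decay estimates $|\hat\Phi_c(\eta)|\lesssim\la\eta\ra^{-(3+\gamma)}$, $|\nabla\hat\Phi_c(\eta)|\lesssim\la\eta\ra^{-(4+\gamma)}$, $|\nabla^2\hat\Phi_c(\eta)|\lesssim\la\eta\ra^{-(5+\gamma)}$ from the Appendix of \cite{amuxy4-1}, together with $|\xi^-|=|\xi|\sin(\theta/2)$ and $\int_0^{\pi/2} b(\cos\theta)\theta^{1-2s}\,d\theta<\infty$ for $s<1$, this yields a pointwise kernel bound with the correct powers of $\la\xi\ra$, $\la\xi_*\ra$, $\la\xi-\xi_*\ra$. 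On the second region no cancellation is needed: $|\xi|\sin(\theta/2)\ge\frac12\la\xi_*\ra$ forces $\la\xi\ra\gtrsim\la\xi_*\ra$ and $\theta\gtrsim\la\xi_*\ra/|\xi|$, so bounding $\hat\Phi_c(\xi_*-\xi^-)$ and $\hat\Phi_c(\xi_*)$ separately by $\la\cdot\ra^{-(3+\gamma)}$ and using $\int_{\gtrsim\la\xi_*\ra/|\xi|}^{\pi/2} b\,d\theta\lesssim(\la\xi\ra/\la\xi_*\ra)^{2s}$ produces the gain $\la D\ra^{2s}$ that is then split as $H^r$ on $g$ and $H^{2s-r}$ on $h$.

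With the kernel bounds in hand the conclusion follows by Cauchy–Schwarz, estimating $\|\hat f\|_{L^\infty}\lesssim\|f\|_{L^1}$ (and its weighted version $\lesssim\|f\|_{L^1_{\gamma+2s}}$), performing the change of variables $(\xi,\xi_*)\to(\xi^+,u)$ with $u=\xi_*-\xi^-$ whose Jacobian is bounded below (as computed in the proof of Proposition \ref{commu-molli}), and distributing the polynomial weights through $\la\xi\ra\lesssim\la\xi-\xi_*\ra\la\xi_*\ra$ and the fact that $\la v-v_*\ra\lesssim 1$ on $\mathrm{supp}\,\Phi_c$; the remaining factors are then recognized as $\|g\|_{H^r_{\gamma+2s-\ell}}$ and $\|h\|_{H^{2s-r}_\ell}$. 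The main obstacle I anticipate is precisely this bookkeeping: one must keep the $2s$ orders of differentiation balanced between $g$ and $h$ while the constraints $r\in[2s-1,2s]$ and $\ell\in[0,\gamma+2s]$ are exactly what is needed, and one must check that the surviving $\xi_*$–integrals (of the form $\int\la\xi_*\ra^{-(3+\gamma)}\la\xi_*\ra^{\gamma+2s}(\text{negative power})\,d\xi_*$) converge — which is where the hypothesis $\gamma+2s>0$ is used, together with the collisional geometry relations $\la v'\ra\lesssim\la v\ra\la v_*\ra$, $|v'-v_*|\le|v-v_*|$ when transferring weights onto the gain terms.
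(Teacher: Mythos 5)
The paper does not actually prove Proposition \ref{hard-upper}: it is recalled verbatim as Proposition 2.1 of \cite{amuxy4-3}, with a pointer to Theorem 2.1 of \cite{amuxy3} for the regularized kinetic factor. So there is no in-text proof here to compare against. Your sketch follows the same Fourier-analytic framework that those references (and the proof of Proposition \ref{commu-molli} in the present paper) actually use: the $\Phi_c$/$\Phi_{\bar c}$ decomposition, the Bobylev-type representation of $(Q_c(f,g),h)$, a second-order Taylor expansion of $\hat\Phi_c$ on the region $\{|\xi^-|\le\tfrac12\la\xi_*\ra\}$ with the troublesome first-order piece killed by oddness on $\SS^2$, and direct estimation on $\{|\xi^-|\ge\tfrac12\la\xi_*\ra\}$ where the angular truncation produces the $(\la\xi\ra/\la\xi_*\ra)^{2s}$ gain, followed by the change of variables $(\xi,\xi_*)\mapsto(\xi^+,u)$. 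None of these individual steps is wrong, and the role you assign to the hypothesis $\gamma+2s>0$ (convergence of the residual $\xi_*$-integrals) is the correct one. What the write-up leaves unverified --- and you acknowledge this explicitly --- is precisely the content of the proposition: the bookkeeping that distributes the $2s$ derivatives between $g$ and $h$ as $H^r$ and $H^{2s-r}$ for the full range $r\in[2s-1,2s]$, transfers the polynomial weights through the collision kinematics to land on $\|g\|_{H^r_{\gamma+2s-\ell}}\|h\|_{H^{2s-r}_\ell}$ for $\ell\in[0,\gamma+2s]$, and confirms that the $\Phi_{\bar c}$ contribution from \cite{amuxy3} really fits under these norms (there the natural weight is $\gamma^+$ rather than $\gamma+2s$, which is fine since $\gamma^+\le\gamma+2s$, but this should be said). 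That bookkeeping is where the cited proof spends most of its effort, not an afterthought; so your proposal should be read as an accurate road map reproducing the known strategy rather than as a self-contained proof.
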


In the following analysis, we shall need an interpolation inequality
concerning weighted type Sobolev spaces in
$v$, see for instance \cite{desv-wen1, HMUY}.

\begin{lemm}\label{lemm2.3}
For any $k\in\RR, p\in\RR_+, \delta>0$,
\begin{equation*}\label{interp-1}
\|f\|^2_{H^{k}_p(\RR^3_v)}\leq C_\delta \|f\|_{H^{k-\delta}_{2
p}(\RR^3_v)} \| f\|_{H^{k+\delta}_0(\RR^3_v)}.
\end{equation*}
\end{lemm}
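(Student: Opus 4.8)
The plan is to obtain this weighted interpolation inequality from Stein's complex interpolation, i.e.\ from the Hadamard three-lines theorem applied to a one-parameter holomorphic family of operators. I would first fix the convention $\|f\|_{H^k_p}^2=\|\langle D_v\rangle^k\langle v\rangle^p f\|_{L^2}^2=\|\langle v\rangle^p f\|_{H^k(\RR^3_v)}^2$; the other common convention (with $\langle v\rangle^p$ on the outside) differs only by a $\delta$-independent factor through a routine commutator estimate, so it is harmless here. By density it suffices to treat $f\in\mathcal S(\RR^3)$, and since
\[
\|f\|_{H^k_p}=\sup_{\|h\|_{L^2}\le1}\bigl|\bigl(\langle D_v\rangle^k\langle v\rangle^p f,\ h\bigr)_{L^2}\bigr|,
\]
it is enough to bound $\bigl|\bigl(\langle D_v\rangle^k\langle v\rangle^p f,h\bigr)_{L^2}\bigr|$ for an arbitrary $h$ with $\|h\|_{L^2}\le1$. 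For such $f,h$ I would introduce, on the closed strip $\{0\le\operatorname{Re}z\le1\}$, the function
\[
\Psi(z)=\bigl(\langle D_v\rangle^{k-\delta+2\delta z}\,\langle v\rangle^{2p(1-z)}f,\ h\bigr)_{L^2},
\]
which is holomorphic in the open strip, continuous on its closure, of at most polynomial growth in $|\operatorname{Im}z|$ there (because $f$ is Schwartz), and which satisfies $\Psi(1/2)=\bigl(\langle D_v\rangle^k\langle v\rangle^p f,h\bigr)_{L^2}$, the quantity to be estimated, while at $\operatorname{Re}z=0$ the symbol is of order $k-\delta$ with weight $2p$, and at $\operatorname{Re}z=1$ of order $k+\delta$ with weight $0$.

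Next I would estimate $\Psi$ on the two boundary lines. Writing $t=\operatorname{Im}z$ and using $\langle v\rangle^{2p(1-it)}=\langle v\rangle^{-2ipt}\langle v\rangle^{2p}$ together with Plancherel (so that $\|\langle D_v\rangle^{k-\delta+2i\delta t}u\|_{L^2}=\|u\|_{H^{k-\delta}}$ since $|\langle\xi\rangle^{2i\delta t}|=1$), one gets
\[
|\Psi(it)|\le\bigl\|\langle v\rangle^{-2ipt}\,(\langle v\rangle^{2p}f)\bigr\|_{H^{k-\delta}}\le C(1+|t|)^{N_0}\,\|f\|_{H^{k-\delta}_{2p}},
\]
the polynomial factor arising because each derivative falling on $\langle v\rangle^{-2ipt}$ produces an $O(t)$ factor, so multiplication by the unimodular function $\langle v\rangle^{-2ipt}$ is bounded on $H^{k-\delta}$ with norm $O((1+|t|)^{N_0})$. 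On $\operatorname{Re}z=1$ the weight degenerates to the unimodular multiplier $\langle v\rangle^{-2ipt}$ and the same reasoning gives $|\Psi(1+it)|\le C(1+|t|)^{N_1}\|f\|_{H^{k+\delta}_0}$. Feeding these into the three-lines theorem in the form valid when the boundary bounds $M_j(t)$ merely satisfy $\int(1+t^2)^{-1}\log^+M_j(t)\,dt<\infty$ (true here, $M_j$ being polynomial), and integrating $\log M_j$ against the harmonic measure of the strip at the point $1/2$ — for which each boundary line carries mass $1/2$ — yields $|\Psi(1/2)|\le C_\delta\,\|f\|_{H^{k-\delta}_{2p}}^{1/2}\|f\|_{H^{k+\delta}_0}^{1/2}$. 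Taking the supremum over $h$ and squaring gives the claim, with $C_\delta$ produced by $\int(\text{harmonic measure})\cdot\log\bigl(C(1+|t|)^{N_j}\bigr)\,dt<\infty$.

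The step I expect to be the main obstacle is precisely the boundary bookkeeping: one must verify that the imaginary powers $\langle D_v\rangle^{2i\delta t}$ contribute only an $L^2$-unitary (hence harmless) factor, that the imaginary weights $\langle v\rangle^{-2ipt}$ are unimodular multipliers, and, crucially, that commuting $\langle v\rangle^{-2ipt}$ through $\langle D_v\rangle^{k\pm\delta}$ costs at most a \emph{polynomial} — not an exponential — in $|t|$, since sub-exponential boundary growth is exactly what keeps the three-lines argument applicable and is what gets absorbed into $C_\delta$. Everything else is routine. As a lighter alternative, one may simply invoke that $\{H^k_p\}_{(k,p)\in\RR\times\RR_+}$ is a complex interpolation scale, with $[H^{k-\delta}_{2p},H^{k+\delta}_0]_{1/2}=H^k_p$ and equivalent norms (the equivalence constants depending on $\delta$ through the endpoints), and then apply the standard bound $\|f\|_{[X_0,X_1]_{1/2}}\le\|f\|_{X_0}^{1/2}\|f\|_{X_1}^{1/2}$, as is done in \cite{desv-wen1, HMUY}.
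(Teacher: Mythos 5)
Your argument is correct and complete in substance. Note, though, that the paper itself gives no proof of this lemma at all: it simply records the statement and cites \cite{desv-wen1, HMUY} for it, so there is no in-paper proof to compare against. Your complex-interpolation route (Hadamard three-lines applied to $\Psi(z)=(\langle D_v\rangle^{k-\delta+2\delta z}\langle v\rangle^{2p(1-z)}f,h)$) is a clean self-contained alternative to the more hands-on Fourier-side/Cauchy--Schwarz computations one typically finds in the cited references, and you correctly isolate the only delicate point: the two imaginary powers $\langle D_v\rangle^{2i\delta t}$ (unitary on every $H^m$) and $\langle v\rangle^{-2ipt}$ (a bounded multiplier on $H^{k\pm\delta}$ with operator norm growing only polynomially in $|t|$, since each derivative hitting $\langle v\rangle^{-2ipt}$ produces one factor of $t$), together with the fact that the Poisson kernel of the strip integrates polynomially-growing boundary data. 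One minor slip in the prose: the equivalence of the two conventions $\|\langle D_v\rangle^k\langle v\rangle^p f\|_{L^2}$ versus $\|\langle v\rangle^p\langle D_v\rangle^k f\|_{L^2}$ is not literally $\delta$-independent, because the lemma uses these norms at the orders $k\pm\delta$ and weights $0,2p$, so the equivalence constants do depend on $\delta$; but this is harmless because $C_\delta$ is allowed to depend on $\delta$, which is all you need. The remaining details (density of $\mathcal S$, duality against $h\in L^2$, the three-lines lemma for boundary data with $\log^+$ of polynomial growth) are standard and correctly invoked.
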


And also another interpolation in $L^q$ is given by

\begin{lemm}\label{interp-2-lem}
Let $1 < q <p $.  Assume that
$f \in L^p$ and $\la v \ra^\ell f \in L^1$ for any $\ell$.
Then $\la v \ra^\ell \, f\in L^q$ for any $\ell$.
More precisely, we have
\begin{equation*}\label{interp-2}
\|f\|_{L^q_\ell} \le 2 \|f\|_{L^p}^{\frac{p(q-1)}{q(p-1)}} \|f\|_{L^1_{\frac{\ell q(p-1)}{p-q}}}^{\frac{p-q}{q(p-1)}}\,.
\end{equation*}
\end{lemm}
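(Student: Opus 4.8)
The plan is to interpolate the $L^q$ norm with weight against the unweighted $L^p$ norm and a heavily weighted $L^1$ norm via Hölder's inequality, choosing the Hölder exponents so that the two pieces combine to the exponent $q$ in the middle. First I would write
\begin{align*}
\|f\|_{L^q_\ell}^q = \int_{\RR^3} |f(v)|^q \la v\ra^{\ell q}\, dv
= \int_{\RR^3} |f(v)|^{q\theta}\, |f(v)|^{q(1-\theta)}\la v\ra^{\ell q}\, dv,
\end{align*}
for a parameter $\theta \in (0,1)$ to be fixed, and then apply Hölder with a pair of conjugate exponents $(a,a')$ chosen so that the first factor $|f|^{q\theta}$ lands in $L^a$ with $q\theta a = p$ (i.e. it is controlled by $\|f\|_{L^p}$), while the second factor $|f|^{q(1-\theta)}\la v\ra^{\ell q}$ lands in $L^{a'}$ with $q(1-\theta)a' = 1$ (i.e. it is controlled by an $L^1$ norm with some weight). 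The two conditions $q\theta a = p$ and $q(1-\theta)a' = 1$ together with $1/a + 1/a' = 1$ form three equations in the three unknowns $\theta, a, a'$; solving them gives $\theta = \frac{p(q-1)}{q(p-1)}$, so that the $L^p$-exponent becomes $\theta = \frac{p(q-1)}{q(p-1)}$ and the $L^1$-exponent becomes $1-\theta = \frac{p-q}{q(p-1)}$, matching the statement. The weight $\la v\ra^{\ell q}$, after being raised to the power $a'$ inside the $L^{a'}$ norm, must be rewritten as $\la v\ra^{\ell'}$ with $\ell'$ absorbed into an $L^1_{\ell'}$ norm; tracking the exponent gives $\ell' = \ell q a' (1-\theta) \cdot \frac{1}{1-\theta}\cdot\frac{1}{q}$... more simply, since $|f|^{q(1-\theta)}\la v\ra^{\ell q}$ has $L^{a'}$ norm equal to $\big(\int |f| \la v\ra^{\ell q a'}\,dv\big)^{1/a'}$ once $q(1-\theta)a'=1$, and $\ell q a' = \frac{\ell q (p-1)}{p-q}$, the $L^1$ weight is exactly $\frac{\ell q(p-1)}{p-q}$ as claimed.

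Carrying this out, Hölder yields
\begin{align*}
\|f\|_{L^q_\ell}^q \le \|f\|_{L^p}^{q\theta}\,\Big(\int_{\RR^3}|f(v)|\la v\ra^{\frac{\ell q(p-1)}{p-q}}\,dv\Big)^{1/a'}
= \|f\|_{L^p}^{q\theta}\,\|f\|_{L^1_{\frac{\ell q(p-1)}{p-q}}}^{q(1-\theta)},
\end{align*}
where I have used $1/a' = q(1-\theta)$. Taking $q$-th roots gives the bound with the stated exponents $\theta = \frac{p(q-1)}{q(p-1)}$ and $1-\theta = \frac{p-q}{q(p-1)}$, and the harmless constant $2$ on the right-hand side covers the case $\ell < 0$ or any rounding in the inequality $\la v\ra \le$ const (indeed one can simply take constant $1$ when $\ell \ge 0$). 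The hypothesis $1 < q < p$ guarantees $\theta \in (0,1)$ and that both auxiliary norms are finite, the $L^1$ one because $\la v\ra^m f \in L^1$ for \emph{any} $m$ by assumption.

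I expect no serious obstacle here; the only point requiring a little care is the bookkeeping of the three exponents $\theta, a, a'$ and verifying that the resulting weight in the $L^1$ norm is precisely $\frac{\ell q(p-1)}{p-q}$ rather than something off by a factor. A secondary, even more minor, point is justifying that the right-hand side is finite so that the interpolation is not vacuous — but this is immediate from the hypotheses. Everything else is a direct application of Hölder's inequality, so the proof should be short.
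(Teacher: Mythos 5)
Your proof is correct, but it takes a genuinely different route from the paper's. The paper uses a level--set (Chebyshev) splitting: it fixes a threshold $\lambda>0$, decomposes $\RR^3$ into the set where $\la v\ra^{\ell q}|f|^{q-p}\le \lambda$ (on which the integrand is dominated by $\lambda |f|^p$) and its complement (on which one factor of $|f|^{q-1}$ can be traded for a power of $\la v\ra$, landing in the weighted $L^1$ space), and then optimizes over $\lambda$ to equalize the two contributions --- this is where the factor $2$ comes from. You instead factor $|f|^q\la v\ra^{\ell q} = |f|^{q\theta}\cdot\big(|f|^{q(1-\theta)}\la v\ra^{\ell q}\big)$ and apply H\"older once with conjugate exponents $a=\tfrac{p-1}{q-1}$, $a'=\tfrac{p-1}{p-q}$, which directly yields the product form with the same exponents and, notably, constant $1$ rather than $2$ (your remark that the constant $2$ is a ``harmless buffer'' is right, and in fact constant $1$ works for \emph{all} $\ell$, not only $\ell\ge 0$, since nothing in the H\"older step depends on the sign of $\ell$). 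Your bookkeeping is correct: $q\theta a = p$, $q(1-\theta)a' = 1$, and $1/a+1/a'=1$ force $\theta = \tfrac{p(q-1)}{q(p-1)}$, $1-\theta = \tfrac{p-q}{q(p-1)}$, and the weight exponent $\ell q a' = \tfrac{\ell q(p-1)}{p-q}$, exactly as stated. Both arguments are short; the H\"older route is a bit cleaner and sharper here, while the paper's splitting argument is the kind that generalizes more readily when the quantity to be bounded is not literally a power of $|f|$.
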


\begin{proof} Take $\lambda>0$, we rewrite
\begin{align*}
\|f\|_{L^q_\ell}^q & = \int_{\la v \ra^{\ell q} |f(v)|^{q-p} \le \lambda} \la v \ra^{\ell q} |f(v)|^{q} dv +
\int_{\la v \ra^{\ell q} |f(v)|^{q-p} > \lambda}\la v \ra^{\ell q} |f(v)|^{q} dv \\
& \le \lambda \|f\|_{L^p}^p + \lambda^{\frac{q-1}{q-p}}\|f\|_{L^1_{\frac{\ell q(p-1)}{p-q}}}\,.
\end{align*}
Taking
$$
\lambda = \|f\|^{\frac{(p-q)}{(p-1)}}_{L^1_{\frac{\ell q(p-1)}{p-q}}}\,\, \|f\|_{L^p}^{-\frac{p(p-q)}{(p-1)}},
$$
we obtain the desired estimate.
\end{proof}

\section{Smoothing effect of $L^2$ weak solutions} \label{section4}
\setcounter{equation}{0}

We start from a weak solution in $L^2$ with bounded moments.

\begin{theo}\label{L2-theo}
Assume that $0<s<1,\,\gamma+2s >0$. If $f$ belongs to $L^\infty([t_0, T]; L^2_\ell(\RR^3))$ for any $\ell\in\NN$ and is a non-negative weak solution of \eqref{1.1} ,  then for any $t_0<\tilde{t}_0<T$, we have
$$
f\in L^\infty([\tilde{t}_0, T]; \cS(\RR^3)).
$$
\end{theo}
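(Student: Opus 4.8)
The plan is to run a bootstrap argument in the velocity-Sobolev scale, using the mollifier $M_\lambda^\delta(D_v)$ from \eqref{mollifier} as a legitimate test-function-generating device. First I would fix $t_0 < \tilde t_0 < T$, pick a cutoff in time $\chi(t)$ which vanishes near $t_0$ and equals $1$ on $[\tilde t_0,T]$, and test the weak formulation in Definition \ref{defi1} against $\varphi(t,v) = \chi(t)^2\, (M_\lambda^\delta(D_v))^2 f(t,v)$ — or rather, since $f$ is only $L^2$, against $\chi^2 (M_\lambda^\delta)^2 f$ regularized so it is an admissible test function, then pass to the limit. This produces the energy identity
\begin{align*}
\frac12\frac{d}{dt}\|\chi M_\lambda^\delta(D_v) f\|_{L^2}^2
= \chi\chi'\|M_\lambda^\delta(D_v) f\|_{L^2}^2
+ \chi^2\Big(M_\lambda^\delta(D_v) Q(f,f),\, M_\lambda^\delta(D_v) f\Big)_{L^2}.
\end{align*}
The quadratic term is split as $\big(Q(f, M_\lambda^\delta f), M_\lambda^\delta f\big) + \big([M_\lambda^\delta, Q(f,\cdot)]f, M_\lambda^\delta f\big)$, where the commutator bracket means $M_\lambda^\delta Q(f,f) - Q(f, M_\lambda^\delta f)$.

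Next I would estimate the two pieces. For the first, apply the coercivity Proposition \ref{coercive-uniform}: since $f(t)\in\mathcal U(D_0,E_0)$ uniformly on $[t_0,T]$ (mass is conserved and is bounded below because $f\in L^1$ with fixed mass; energy and entropy are controlled), \eqref{coer-uni} gives
$-\big(Q(f,M_\lambda^\delta f), M_\lambda^\delta f\big) \ge c_0\|\la v\ra^{\gamma/2} M_\lambda^\delta f\|_{H^s}^2 - C\|\la v\ra^{\gamma/2} M_\lambda^\delta f\|_{H^{(-\gamma/2)^+}}^2$, yielding a good gain-of-$H^s$ term with the sign needed to close. For the commutator, split $Q = Q_c + Q_{\bar c}$: the regular part is handled by Proposition \ref{prop2.9_amuxy3} and the singular part by Theorem \ref{commu} (choosing $s' < s$ with $\gamma + 2s' > 0$, $2s'\ge(2s-1)^+$, and $(N_0,\lambda)$ obeying \eqref{restriction}), producing a bound of the form $\lesssim (\|f\|_{L^1_{m}} + \|f\|_{H^{(\lambda+s'-3)^+}})\,\|M_\lambda^\delta f\|_{H^{s'}_{m}}\,\|M_\lambda^\delta f\|_{H^{s'}}$ for a suitable moment weight $m$. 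Since $s' < s$, this is absorbed into the coercive $H^s$ term by interpolation (Lemma \ref{lemm2.3}) plus Young's inequality, leaving lower-order $L^2$-type quantities on the right. Integrating in $t$ and using $\int_{t_0}^T \|M_\lambda^\delta f\|_{L^2}^2 < \infty$ (from the inductive hypothesis on regularity), I get a bound on $\int_{\tilde t_0}^T \|\la v\ra^{\gamma/2}M_\lambda^\delta f\|_{H^s}^2\,dt$ uniform in $\delta$; letting $\delta\to 0$ by Fatou promotes $f$ from $L^\infty_t H^{\lambda}$ to $L^2_t H^{\lambda+s}$ (with weights). A second iteration of this estimate — now testing against a time cutoff squared times $(M_{\lambda+s}^\delta)^2 f$ and using that the right-hand side is integrable in time — upgrades $L^2_t H^{\lambda+s}$ to $L^\infty_t H^{\lambda+s}$. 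Starting from $\lambda = 0$ and iterating, $f\in L^\infty_{\mathrm{loc}}((t_0,T]; H^k(\RR^3))$ for every $k$; combined with the assumed moment bounds $\la v\ra^\ell f\in L^\infty_t L^2$ and Lemma \ref{interp-2-lem}, one gets $\la v\ra^\ell \partial_v^\alpha f \in L^\infty_t L^2$ for all $\ell,\alpha$, i.e. $f\in L^\infty([\tilde t_0,T];\mathcal S(\RR^3))$.

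The main obstacle is the rigorous justification that $\chi^2(M_\lambda^\delta)^2 f$ (and later its analogues with higher $\lambda$) may be used as a test function: $f$ is only a weak solution in $L^2_\ell$, so one must first verify the quadratic term $\big(M_\lambda^\delta Q(f,f), M_\lambda^\delta f\big)$ is well-defined — this needs the upper bound Proposition \ref{hard-upper} together with the commutator estimates to see $M_\lambda^\delta Q(f,f)$ pairs against $M_\lambda^\delta f$ — and then approximate, e.g. by an additional truncation $M_{\lambda'}^{\delta'}$ with $\lambda'$ large negative (or a standard mollification in $v$) that makes the test function smooth and compactly-spectrum-supported, derive the identity there, and pass to the limit using the uniform-in-parameter bounds. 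The delicate point throughout is bookkeeping the velocity weights: each application of $Q$ costs weight $\gamma^+ + (2s-1)^+$ (or $\gamma+2s$ in the upper bound), so one must carry enough moments through the induction, which is exactly why the hypothesis $\la v\ra^\ell f\in L^\infty_t L^2$ for all $\ell$ is assumed. The condition $\gamma + 2s > 0$ is what guarantees all the relevant integrals (in particular those producing the $\la u\ra^{-(3+2(\gamma+2s'))}$ factors in Proposition \ref{commu-molli}) converge and that the gain term dominates.
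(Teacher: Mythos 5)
Your proposal is correct in outline and relies on the same core machinery as the paper — the mollifier $M_\lambda^\delta(D_v)$, the uniform coercivity Proposition~\ref{coercive-uniform}, the commutator bound Theorem~\ref{commu}, the interpolation Lemma~\ref{lemm2.3}, and a Sobolev bootstrap — but the bootstrap is driven by a different technical device. You use a time cutoff $\chi$ together with a \emph{fixed} exponent $\lambda$, which forces a two-step gain: one application produces $f\in L^2_{t}H^{\lambda+s}_{\mathrm{loc}}$ (on the smaller interval, uniformly in $\delta$ because the $\chi\chi'$ term lives where the previous step's regularity is available), and a second application with $M_{\lambda+s}^\delta$ upgrades this to $L^\infty_t H^{\lambda+s}_{\mathrm{loc}}$. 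The paper instead takes a \emph{time-dependent} exponent $\lambda(t)=Nt+a$, so the mollified initial datum $M_{\lambda(0)}^\delta f(0)=M_a^\delta f(0)$ is automatically controlled by the inductive hypothesis $f\in L^\infty_t H^a_\ell$; no time cutoff is needed, and a single application of the energy inequality yields $f(T_1)\in H^{(1-s)+a}$ for every $T_1$. The price of the paper's trick is the extra term $\partial_t M_{\lambda(t)}^\delta= N(\log\la\xi\ra) M_{\lambda(t)}^\delta$ in \eqref{imp-ene}, which must be absorbed by the coercive term via Lemma~\ref{lemm2.3}; your constant-$\lambda$ version avoids that logarithm entirely, at the cost of the intermediate $L^2_t$ pass. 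Both converge after countably many iterations and both need the full tower of $v$-moments to trade Sobolev index against weight, exactly as you note.

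Two points deserve emphasis. First, what you call the ``main obstacle'' — justifying $\chi^2(M_\lambda^\delta)^2 f$ as a test function and passing to the limit in the approximate energy identity — is not a side remark: it is precisely the content of Lemma~\ref{conti-w-equation} in the paper, which works with the extra regularization $M_\lambda^{\delta,\kappa}=M_\lambda^\delta/(1+\kappa\la\xi\ra)$, telescopes over a fine time partition, and only then lets $\kappa\to 0$. Your sketch (``additional truncation $\ldots$ derive the identity there, pass to the limit'') is aimed at exactly this and must be carried out with the same care, checking in particular that $M_\lambda^\delta f(t)\in H^{s_1}_{\ell_0}$ for some $s_1>s$ uniformly in $t$ on the relevant interval — this is where the constraint $\gamma+2s>0$ and the choice of $N_0$ relative to $\lambda$ enter, beyond the requirement \eqref{restriction} needed for Theorem~\ref{commu}. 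Second, be aware that when $\lambda+s'\ge 3/2$ the commutator estimate picks up the factor $\|f\|_{H^{(\lambda+s'-3)^+}}$, so one must verify at each stage of the iteration that this norm is already known to be finite from the previous stage; with your two-step scheme this amounts to checking $s+s'\le 3$, which always holds, so there is no genuine gap, but it should be stated.
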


\begin{proof} Without loss of generality, take $t_0=0$.
Assume that, for $a\geq 0$, we have
\begin{align}\label{induction-hypothesis-1}
\sup_{[0,T]} \|f(t,\cdot)\|_{H^{a}_\ell} < \infty\, \enskip \mbox{for any $\ell\in\NN$}.
\end{align}
Take  $\lambda(t) = N t +  a$ for $N>0$. Choose $N_0 = a + (5+\gamma)/2$. Then the pair
$(N_0, \lambda(t))$ satisfies \eqref{restriction}.
If we choose  $N, T_1 >0$ such that $NT_1=(1-s)$, then
\begin{equation*}\label{caution}
\lambda(T_1) - N_0 -a \le   \lambda(T_1) - N_0  <   -3/2\,,
\end{equation*}
from which we  have, for $t, t' \in [0, T_1]$,
\begin{equation}\label{str}
 M_{\lambda(t)}^\delta f(t')
\in L^\infty([0,T_1]  \times [0,T_1] ; H^{3/2}_\ell(\RR^3)\cap L^\infty(\RR^3)),
\end{equation}
because of \eqref{induction-hypothesis-1}. By the same way as in (3.5) and (3.6) of
\cite{MUXY-DCDS}, we have
\begin{equation}\label{continuity}
 M_{\lambda(t)}^\delta f(t) \in C([0,T_1]; L^2(\RR^3)),
\end{equation}
and for any $t\in ]0, T_1]$, we have
\begin{eqnarray}\label{energyequality1}
&&\frac{1}{2}\int_{\RR^3} \big(M_{\lambda(t)}^\delta f(t)\big)^2 dv
-\frac{1}{2}\int^t_0 \int_{\RR^3}
f(\tau)\Big(\partial_\tau  (M_{\lambda(\tau)}^\delta)^2\Big)f(\tau) dv d\tau \notag \\
&& = \frac{1}{2}\int_{\RR^3} \big(M_{\lambda(0)}^\delta f_0\big)^2  dv\\
&& + \int^t_0
\Big( Q\big (f(\tau), M_{\lambda(\tau)}^{\delta}f(\tau)\big ), \,
M_{\lambda(\tau)}^{\delta} f(\tau)\Big)_{L^2} d\tau \notag\\
&& + \int^t_0
\Big( M_{\lambda(\tau)}^{\delta}Q\big (f(\tau), f(\tau)\big )
-Q\big (f(\tau), M_{\lambda(\tau)}^{\delta}f(\tau)\big ), \,
M_{\lambda(\tau)}^{\delta} f(\tau)\Big)_{L^2} d\tau, \notag
\end{eqnarray}
by taking $(M_{\lambda(t)}^\delta)^2 f(t)$ as a test function in the definition of  the weak solution,
though it does not belong to $L^\infty([0,T_1];W^{2,\infty}(\RR^3))$.
In fact, we can show
\eqref{continuity} and  \eqref{energyequality1}
under a weaker condition than \eqref{str}, which will be given in Lemma
\ref{conti-w-equation} below.

Noting
\[
\partial_t M_{\lambda(t)}^\delta = N (\log \la \xi \ra )M_{\lambda(t)}^\delta\,,
\]
by Theorem \ref{commu} we have
\begin{align}\label{imp-ene}
&\frac{1}{2}\|\big (M_\lambda^\delta f \big)(t)\|_{L^2}^2
\leq  \frac{1}{2}\|f(0)\|_{H^a}^2 + \int_0^t
\Big( Q(f(\tau), \big(M_\lambda^\delta f\big)(\tau)\,) ,  \big(M_\lambda^\delta f\big)(\tau)\Big) d\tau
\notag \\
&\qquad +C_f \int_0^t
\|\big(
M_\lambda^\delta f\big)(\tau)\|_{H^{s'} _{(2s+\gamma -1)^+}}
\|\big(M_\lambda^\delta f\big)(\tau) \|_{H^{s'}} d\tau\\
&\qquad  + C N  \int_0^t \|(\log \la D\ra)^{1/2} \big (M_\lambda^\delta f \big)(\tau)\|_{L^2}^2 d\tau\,.
\notag
\end{align}
Since the uniform coercive estimate  \eqref{coer-uni} together with the interpolation in
the Sobolev space yields
\[
\Big( Q(f(\tau), \big(M_\lambda^\delta f\big)(\tau)\,) ,  \big(M_\lambda^\delta f\big)(\tau)\Big)
\le
-c_f \|\big(M_\lambda^\delta f\big)(\tau)\|^2_{H^s_{\gamma/2}}
+ C_f\|f(\tau)\|^2_{H_{\gamma/2}^{-2}}\,,
\]
by means of Lemma \ref{lemm2.3}  we have
\begin{equation}\label{ene-34}
\|\big (M_\lambda^\delta f \big)(t)\|_{L^2}^2
+ c_f \int_0^t \|\big(M_\lambda^\delta f\big)(\tau)\|^2_{H^s_{\gamma^+/2}}d\tau
\leq  \|f(0)\|_{H^a}^2 + C_f \int_0^t \|f(\tau)\|^2_{H_{\ell}^{a}} d\tau\,.
\end{equation}
Taking $\delta \rightarrow +0$ and $t=T_1$, we have $f(T_1) \in H^{\lambda(T_1)}= H^{NT_1+a}$.
This is true for any $0<T_1\leq T$. Choosing $N=(1-s)T^{-1}_1$, we have that for any $0<T_1\leq T$,
$$
f(T_1) \in H^{(1-s)+a}\,.
$$
Fix $0<s_0<(1-s)$. Then, by using  Lemma \ref{lemm2.3} and assumption \eqref{induction-hypothesis-1}, we see that for any $0<t_1< \tilde{t}_0$ and any $\ell$,
$$
\sup_{[t_1, T]} \|f(t,\cdot)\|_{H^{s_0+a}_\ell} < \infty\,.
$$

We can restart by replacing $a$ by $a + s_0=a_1$ and
$t_0$ by $t_1$. By induction, for $a_0=0, a_k=k\,s_0$, and $t_k=\tilde{t}_0-(2k)^{-1}(\tilde{t}_0-t_0)$, we have
for any $k\in\NN$ and any $\ell$,
$$
f\in L^\infty([t_k,\, T]; H^{a_k}_\ell(\RR^3)),
$$
which concludes the proof of Theorem \ref{L2-theo}.
\end{proof}

\begin{rema}\label{pf-thm1-2-1}
When $0<s<1/2$ and $\gamma > 0$ we can use
$\int_0^t \|\big(M_\lambda^\delta f\big)(\tau) \|^2_{H^{s'}_{\gamma/2}}d\tau$
for  the corresponding term
in  \eqref{imp-ene}. Hence, instead of \eqref{ene-34}, we can obtain
\begin{equation*}
\|\big (M_\lambda^\delta f \big)(t)\|_{L^2}^2
\leq  \|f(0)\|_{H^a}^2 + C_f \int_0^t \|f(\tau)\|^2_{H_{\gamma/2}^{-2}} d\tau\,,
\end{equation*}
which shows that $f(t) \in L^\infty([0,T];  L^2 \cap L^1_{2}(\RR^3))$ implies $f(t) \in H^\infty(\RR^3)$ for $t>0$.
\end{rema}

\begin{lemm}\label{conti-w-equation}
Let $T_1 >0$ and
let $M^\delta_{\lambda(t)}(\xi)$ be defined by  \eqref{mollifier} with $\lambda = \lambda(t) = Nt +a$ for $NT_1 <1$ and $a \in \RR$.
Suppose that
$$
f \in L^1([0,T_1]; L^1_{\max\{\gamma+2s, 2\}}(\RR^3) )\cap L^\infty([0,T_1];
H^a(\RR^3)).
$$
If there exists  { $s_1 >s$ } such that
$$
M_{\lambda(t)}^\delta f(t', v) \in L^\infty([0,T_1]_t \times
[0,T_1]_{t'}; H^{ s_1}_{\ell_0}(\RR^3_v))
$$
for $\ell_0 = \max\{\gamma/2 +s, \gamma^+ + (2s-1)^+ \}$, then we
have \eqref{continuity}, and \eqref{energyequality1} for any $t \in
[0,T_1]$.
 Furthermore, if $0<s<1/2$ and $\gamma >0$ we can take $\ell_0 = \gamma/2 +s$.
\end{lemm}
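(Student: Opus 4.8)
The plan is to run the argument behind (3.5)--(3.6) of \cite{MUXY-DCDS}. The key observation is that for a \emph{fixed} $\delta>0$ the operator $M^\delta_{\lambda(t)}(D_v)$ lies in $S^{\lambda(t)-N_0}_{1,0}$ and is therefore smoothing of high order, $N_0$ having been chosen as in the proof of Theorem \ref{L2-theo}; moreover the coercivity and upper bound estimates (Proposition \ref{coercive-uniform}, Proposition \ref{hard-upper} and Theorem \ref{commu}) only see $H^{s}$-type regularity of the test function, so that using $(M^\delta_{\lambda(t)})^2 f(t)$ is legitimate as far as the velocity variable is concerned. The only genuine difficulty is the missing time regularity of $f$, which I would remove by a convolution in $t$: on a slightly shrunk interval set $f_\kappa(t)=(\rho_\kappa*_t f)(t)$ with $\rho_\kappa$ a mollifier on $\RR$. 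Then $f_\kappa$ is $C^1$ in $t$ with values in $\cD'(\RR^3)$, it retains the assumed uniform bounds, and from the integrated weak formulation $f(t)=f_0+\int_0^t Q(f,f)\,d\tau$ it satisfies $\partial_t f_\kappa=(Q(f,f))_\kappa$ in $\cD'(\RR^3)$.

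Applying $M^\delta_{\lambda(t)}(D_v)$, one sees that $u_\kappa(t):=M^\delta_{\lambda(t)}f_\kappa(t)$ satisfies the Hilbert-space ODE $\partial_t u_\kappa=N(\log\la D\ra)u_\kappa+M^\delta_{\lambda(t)}(Q(f,f))_\kappa$, every term lying in $L^2$ thanks to the smoothing of $M^\delta_{\lambda(t)}$ together with Proposition \ref{hard-upper} and the moment hypothesis $f\in L^1([0,T_1];L^1_{\max\{\gamma+2s,2\}})$. Pairing with $u_\kappa$ in $L^2$ and integrating on $[0,t]$ then gives \eqref{energyequality1} with $(f,Q(f,f))$ replaced by $(f_\kappa,(Q(f,f))_\kappa)$, the time derivative being $\partial_\tau(M^\delta_{\lambda(\tau)})^2=2N(\log\la D\ra)(M^\delta_{\lambda(\tau)})^2$. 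It then remains to let $\kappa\to0$: the $f_0$-term and the $\partial_\tau$-term pass to the limit by dominated convergence, using the uniform $L^\infty(H^{s_1}_{\ell_0})$ bound; in the collision term $\int_0^t(M^\delta_{\lambda(\tau)}(Q(f,f))_\kappa,u_\kappa)\,d\tau$ one transfers the $t$-convolution onto the bilinear pairing and combines $f_\kappa\to f$ in $L^2_{loc}(dt;H^a)$, $M^\delta_{\lambda(\tau)}f_\kappa(\tau)\to M^\delta_{\lambda(\tau)}f(\tau)$ (weakly in $H^{s_1}_{\ell_0}$, uniformly bounded), and convergence of $(Q(f,f))_\kappa$ in the appropriate weighted negative Sobolev space, the spaces being matched through Proposition \ref{hard-upper} and through the commutator estimate of Theorem \ref{commu} after writing $M^\delta Q(f,f)=Q(f,M^\delta f)+[M^\delta,Q(f,\cdot)]f$. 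This is precisely where the value $\ell_0=\max\{\gamma/2+s,\gamma^+ +(2s-1)^+\}$ is forced, being the velocity weight under which $M^\delta f$ must sit for those two pairings to close, and the outcome is \eqref{energyequality1} for $f$.

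For \eqref{continuity}, identity \eqref{energyequality1} exhibits $t\mapsto\|M^\delta_{\lambda(t)}f(t)\|_{L^2}^2$ as a continuous function of $t$ (data plus time-integrals of $L^1(dt)$ functions), while $f\in C(\RR^+;\cD'(\RR^3))$ together with the uniform bound $M^\delta_{\lambda(t)}f(t)\in L^\infty(H^{s_1}_{\ell_0})$ gives weak continuity of $t\mapsto M^\delta_{\lambda(t)}f(t)$ into $L^2$; weak continuity plus continuity of the norm is strong continuity in $L^2$, which is \eqref{continuity}. When $0<s<1/2$ and $\gamma>0$ one replaces Proposition \ref{hard-upper} and Theorem \ref{commu} by their lower-weight variants (Remark \ref{hard-poten} and the last sentence of Theorem \ref{commu}), so that all velocity weights collapse to $\gamma/2$ and $\ell_0=\gamma/2+s$ is admissible.

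The step I expect to be the main obstacle is the limit $\kappa\to0$ in the quadratic collision term: one must verify it in topologies that are simultaneously compatible with Proposition \ref{hard-upper} and with the commutator estimate, uniformly in the two time variables, and it is exactly here that the strict gap $s_1>s$ is used, supplying the room above the critical index $H^s$ needed to absorb the commutator errors and the derivative loss in $Q$. A secondary point requiring care is the behaviour near $t=0$, where the mollification must be chosen consistently with $f(0)=f_0\in H^a$, and one has to confirm that the velocity weights produced by $Q$ are genuinely absorbed by the $L^1$ moment hypothesis on $f$ --- this last check being what actually pins down $\ell_0$.
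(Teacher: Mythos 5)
Your route is genuinely different from the paper's, though it aims at the same two conclusions and must invoke the same core estimates (Proposition \ref{hard-upper}, Theorem \ref{commu}, and the weight matching that forces $\ell_0$). The paper does \emph{not} mollify in time. Instead it: (i) justifies the non--$W^{2,\infty}$ test function $(M^\delta_{\lambda(t)})^2 f(t)$ at a \emph{fixed} $t$ by an approximating sequence $\psi_j\in C_0^\infty$ with $(M^\delta_{\lambda(t)})^{-1}\psi_j\to M^\delta_{\lambda(t)}f(t)$ in $H^s_{\ell_0}$; (ii) derives the two-time ``difference'' identity \eqref{difference} and a companion ``sum'' identity, from which \eqref{continuity} and \eqref{product} follow (this is in substance your ``norm continuity plus weak continuity implies strong continuity'' step, carried out concretely); and (iii) obtains \eqref{energyequality1} by first introducing an extra \emph{velocity} regularization $M^{\delta,\kappa}_{\lambda(t)}=M^\delta_{\lambda(t)}/(1+\kappa\la\xi\ra)$, partitioning $[0,t]$ into $k$ subintervals, summing the discrete identities \eqref{difference-j}, and taking $k\to\infty$ (a Lebesgue-point argument for the $L^1(dt)$ collision integral and a Riemann-sum limit for the $\partial_\tau M^2$ term) before sending $\kappa\to0$.

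What your approach buys is conceptual directness: the equation becomes a bona fide Hilbert-space ODE for $u_\kappa$ and the energy identity is literal integration. What it costs, as you anticipate, is the $\kappa\to0$ passage in the quadratic term, and this is more delicate than you indicate for two compounding reasons. First, $(Q(f,f))_\kappa(t)=\int\rho_\kappa(t-s)Q(f(s),f(s))\,ds$ is a time-convolution of the bilinear output, not $Q(f_\kappa,f_\kappa)$, so the commutator decomposition $M^\delta_{\lambda(t)}Q=Q(\cdot,M^\delta_{\lambda(t)}\cdot)+[M^\delta_{\lambda(t)},Q]$ must be applied at each inner time $s$ with the \emph{outer}-time multiplier $M^\delta_{\lambda(t)}$; this is exactly why the lemma's hypothesis is formulated in two time variables, and your argument should cite that formulation explicitly when invoking Theorem \ref{commu} inside the convolution. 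Second, the $s$-dependence of $Q(f(s),f(s))$ is only $L^1(ds)$, so a.e.\ convergence as $\kappa\to0$ must go through a Lebesgue-point argument rather than dominated convergence alone; you would also need to verify that $s\mapsto\|M^\delta_{\lambda(t)}f(s)\|_{H^{s'}_{\ell_0}}$ is measurable and essentially bounded in $(t,s)$ so that Fubini applies. The paper's Riemann-sum scheme avoids the convolution issue entirely -- the multiplier is frozen on each subinterval -- at the price of the extra parameter $\kappa$ and the $k\to\infty$ step, where the analogous Lebesgue-point care is taken by showing the map $\tau\mapsto\|M^\delta_{\lambda(t_j)}/(1+\kappa\la D\ra)^2\,f(\tau)\|_{H^s_{\gamma/2+s}}$ is continuous. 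Both routes work; yours is cleaner to state, the paper's is easier to verify line by line. Your identification of where $\ell_0$ is pinned down, and of the role of the gap $s_1>s$, is correct.
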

\begin{proof}
In Definition
\ref{defi1}, taking $\varphi(t,v) = \psi(v) \in
C_0^{\infty}(\RR^3)$,  we get
\[\int_{\RR^3} f(t) \psi dv - \int_{\RR^3} f(t') \psi dv = \int_{t'}^t d \tau \int_{\RR^3} Q(f(\tau),f(\tau))
\psi dv\,, \enskip 0 \leq t' \leq t \leq T_0\,.\]
By taking a sequence $\{\psi_j(v)\}_{j=1}^\infty \subset C_0^\infty(\RR^3_v)$ such that $(M_{\lambda(t)}^\delta)^{-1} \psi_j \rightarrow M_{\lambda(t)}^\delta f(t)$
in $H_{\ell_0}^s$,
we can  set  $\psi
=  (M_{\lambda(t)}^\delta)^2 f(t)$ for a fixed $t$
because
\[
 \left | \int_{\RR^3} f(t')  (M_{\lambda(t)}^\delta)^2 f(t) dv \right| \le \|M_{\lambda(t)}^\delta f(t')\|_{L^2}\|M_{\lambda(t)}^\delta f(t)\| _{L^2}< \infty\,,
 \]
and by noting
\[
(Q(f,f), (M_\lambda^\delta)^2f)
= (Q(f, M_\lambda^\delta f), M_\lambda^\delta f)  + ( M_\lambda^\delta Q(f,f) - Q(f,M_\lambda^\delta f), M_\lambda^\delta f)\,,
\]
we have
\begin{align*}
&\left| \int_{t'}^t d \tau \int_{\RR^3} Q(f(\tau),f(\tau))
 (M_{\lambda(t)}^\delta)^2 f(t) dv \right|\\
&\lesssim   \int_{t'}^t \|f(\tau)\|_{L^1_{\gamma+2s}}d \tau  \Big(\sup_{\tau, t \in [0,T_1] }   \|M_{\lambda(t)}^\delta f(\tau)\|_{H^s_{\gamma/2 +s} }\|M_{\lambda(t)}^\delta f(t)\|_{H^s_{\gamma/2 +s} }\Big)
\\
&\quad  +  \Big( \int_{t'}^t \|f(\tau)\|_{L^1_{\gamma^+ +(2s-1)^+}} d\tau+   |t-t'|\sup_{\tau\in [0,T_1] }\|f(\tau)\|_{H^a} \Big)\\
& \qquad \qquad \times \Big(\sup_{\tau, t \in [0,T_1] }\|M_{\lambda(t)}^\delta f(\tau)\|_{H^s_{\gamma^+ +(2s-1)^+}}
\|M_{\lambda(t)}^\delta f(t)\|_{H^s} \Big)\,,
\end{align*}
thanks to Proposition \ref{hard-upper}  and Theorem \ref{commu}.
Setting  $\psi
=   (M_{\lambda(t')}^\delta)^2 f(t')$ also and taking the sum, we
obtain
\begin{align}\label{difference}
&\int_{\RR^3} \big( M_{\lambda(t)}^\delta f(t)\big)^2 dv- \int_{\RR^3}
\big(M_{\lambda(t')}^\delta f(t') \big)^2 dv \notag \\
&\qquad =  \int_{\RR^3} f(t) \left(  (M_{\lambda(t)}^\delta)^2  -(M_{\lambda(t')}^\delta)^2
\right) f(t') dv \\
& \qquad \qquad + \int_{t'}^t d \tau \int_{\RR^3} Q(f(\tau),f(\tau))
\left(  (M_{\lambda(t)}^\delta)^2 f(t) +  (M_{\lambda(t')}^\delta)^2 f(t')   \right)dv\,. \notag
\end{align}
Since it follows from the mean value theorem that
the first term on the right hand side is estimated by
\[  |t-t'|  \sup_{ 0\le t' < \tilde \tau <t \le T_1}
\|M_{\lambda(\tilde \tau)}^\delta f(t)\|_{L^2}\| (\log \la D \ra )
M_{\lambda(\tilde \tau)}^\delta f(t')\|_{L^2},
\]
 we obtain (\ref{continuity}), namely $M_{\lambda(t)}^\delta f(t)
\in C([0,T_0]; L^2(\RR^3))$.

Taking  $\psi =   (\log\la D \ra)^2(M_{\lambda(t')}^\delta)^2
f(t')$, we also have
 $$
 (\log \la D \ra ) M_{\lambda(t)}^\delta f(t)
\in C([0,T_0]; L^2(\RR^3))\,.
$$
Taking the difference, instead of  \eqref{difference}, we get
\begin{align*}
&\int_{\RR^3} \big( M_{\lambda(t)}^\delta f(t)\big)^2 dv+ \int_{\RR^3}
\big(M_{\lambda(t')}^\delta f(t') \big)^2 dv \notag \\
&\qquad =  \int_{\RR^3} f(t) \left(  (M_{\lambda(t)}^\delta)^2  +(M_{\lambda(t')}^\delta)^2
\right) f(t') dv \\
& \qquad \qquad + \int_{t'}^t d \tau \int_{\RR^3} Q(f(\tau),f(\tau))
\left(  (M_{\lambda(t)}^\delta)^2 f(t) -  (M_{\lambda(t')}^\delta)^2 f(t')   \right)dv\,, \notag
\end{align*}
which shows
\[
\lim_{t' \rightarrow t}
\int_{\RR^3} f(t) \left(  (M_{\lambda(t)}^\delta)^2  +(M_{\lambda(t')}^\delta)^2
\right) f(t') dv = 2 \int_{\RR^3}  \big(M_{\lambda(t)}^\delta f(t)\big )^2dv\,,
\]
and moreover
\begin{equation}\label{product}
\lim_{t' \rightarrow t}
\int_{\RR^3} \big (M_{\lambda(t)}^\delta f(t)\big) \big (M_{\lambda(t')}^\delta
 f(t')\big) dv =  \int_{\RR^3} \big(M_{\lambda(t)}^\delta f(t)\big )^2dv \,.
\end{equation}

To prove \eqref{energyequality1} we introduce
\[
M_{\lambda(t)}^{\delta,\kappa} = \frac{M_{\lambda(t)}^\delta(\xi)}{1+\kappa \la \xi \ra} \,,
\]
with a new parameter $\kappa >0$. Divide $[0,t]$ into
$k$ subintervals with the same length and put $t_j = jt/k$ for $j=0,\cdots, k$.
Similar to \eqref{difference},  we have
\begin{align}\label{difference-j}
&\int_{\RR^3} \big( M_{\lambda(t)}^{\delta,\kappa} f(t_j)\big)^2 dv- \int_{\RR^3}
\big(M_{\lambda(t_{j-1})}^{\delta,\kappa} f(t_{j-1}) \big)^2 dv \notag \\
&\qquad =  \int_{\RR^3} f(t_j) \left(  (M_{\lambda(t_j)}^{\delta,\kappa})^2
-(M_{\lambda(t_{j-1})}^{\delta,\kappa})^2
\right) f(t_{j-1}) dv \\
& \qquad \qquad + \int_{t_{j-1}}^{t_j} d \tau \int_{\RR^3} Q(f(\tau),f(\tau))
\left(  (M_{\lambda(t_j)}^{\delta,\kappa})^2 f(t_j) +  (M_{\lambda(t_{j-1})}^{\delta,\kappa})^2 f(t_{j-1})   \right)dv\,.
\notag
\end{align}
Since  we have
\begin{align*}
& \int f(t_j) \left(  (M_{\lambda(t_j)}^{\delta,\kappa})^2
-(M_{\lambda(t_{j-1})}^{\delta,\kappa})^2
\right) f(t_{j-1}) dv\\
&=\int 2N  f(t_j)(\log \la D \ra)(M_{\lambda(\tau_j)}^{\delta,\kappa})^2
f(t_{j-1}) dv(t_j-t_{j-1})\enskip \mbox{ $\tau_j \in ]t_{j-1},t_j[$}\\
&= 2 N \int
 \big( \sqrt {\log\la D \ra}M_{\lambda(t_j)}^{\delta,\kappa}f(t_j) \big)
\big (\sqrt {\log\la D \ra}M_{\lambda(t_{j-1})}^{\delta,\kappa}f(t_{j-1}) \big ) dv(t_j-t_{j-1})\\
&+ N^2 \Big(\sup_{\tau', \tau''\in [0,T_1]}
\|\log \la D \ra M_{\lambda(\tau')}^{\delta,\kappa} f(\tau'')\|_{L^2}
\Big)^2  O\big( |t_j -t_{j-1}|^2 \big ) \,,
\end{align*}
it follows from a similar formula as \eqref{product} that
\begin{align*}
&\lim_{k \rightarrow \infty} \sum_{j=1}^k
\int f(t_j) \left(  (M_{\lambda(t_j)}^{\delta,\kappa})^2
-(M_{\lambda(t_{j-1})}^{\delta,\kappa})^2
\right) f(t_{j-1}) dv\\
&\qquad = N \int_0^t \int_{\RR^3} \big (\sqrt {\log\la D \ra} M_{\lambda(\tau)}^{\delta,\kappa}f(\tau) \big)^2dv d\tau\,.
\end{align*}
Summing up \eqref{difference-j} with respect to $j=1,\cdots, k$ and making $k \rightarrow \infty$,
we obtain
\begin{eqnarray}\label{energyequality-kappa}
&&\frac{1}{2}\int_{\RR^3} \big(M_{\lambda(t)}^{\delta,\kappa} f(t)\big)^2 dv
-\frac{1}{2}\int^t_0 \int_{\RR^3}
f(\tau)\Big(\partial_\tau  (M_{\lambda(\tau)}^{\delta,\kappa})^2\Big)f(\tau) dv d\tau \notag \\
&& = \frac{1}{2}\int_{\RR^3} \big(M_{\lambda(0)}^{\delta,\kappa} f_0\big)^2  dv \\
&&+ \int^t_0
\Big( Q\big (f(\tau), M_{\lambda(\tau)}^{\delta}f(\tau)\big ), \,
\frac{M_{\lambda(\tau)}^{\delta}}{(1+\kappa\la D \ra)^2} f(\tau)\Big)_{L^2} d\tau \notag\\
&&+ \int^t_0
\Big( M_{\lambda(\tau)}^{\delta}Q\big (f(\tau), f(\tau)\big )-Q\big (f(\tau), M_{\lambda(\tau)}^{\delta}f(\tau)\big ), \,
\frac{M_{\lambda(\tau)}^{\delta}}{(1+\kappa\la D \ra)^2} f(\tau)\Big)_{L^2} d\tau, \notag
\end{eqnarray}
thanks to Proposition \ref{hard-upper}  and Theorem \ref{commu}.
In fact, for example, we have
\begin{align*}
&\left| \Big(
Q\big (f(\tau), M_{\lambda(t_j)}^{\delta}f(\tau)\big ), \,
\frac{M_{\lambda(t_j)}^{\delta}}{(1+\kappa\la D \ra)^2} f(t_j)\Big)_{L^2}\right|\\
&\lesssim \sup_{\tau, t_j \in [0,T_1] } \Big \{  
\|f(\tau)\|_{L^1_{\gamma+2s}} \|M_{\lambda(t_j)}^\delta f(\tau)\|_{H^s_{\gamma/2 +s} }\|\frac{M_{\lambda(t_j)}^{\delta}}{(1+\kappa\la D \ra)^2} f(t_j)\|_{H^s_{\gamma/2 +s} }
\Big \}\,,
\end{align*}
and hence the Lebesgue convergence theorem yields \eqref{energyequality-kappa}
because,
\[
\|\frac{M_{\lambda(t_j)}^{\delta}}{(1+\kappa\la D \ra)^2} f(t_j)\|_{H^s_{\gamma/2 +s} }
\rightarrow
\|\frac{M_{\lambda(\tau)}^{\delta}}{(1+\kappa\la D \ra)^2} f(\tau)\|_{H^s_{\gamma/2 +s} }
\enskip  \mbox{as $|t_j-\tau|\rightarrow 0$}.
\]
Taking $\kappa \rightarrow 0$ in \eqref{energyequality-kappa} we obtain the desired formula.
The last assertion of the lemma follows easily from the one of Theorem \ref{commu}.
\end{proof}
\smallskip

\section{Smoothing effect of $L^1$ weak solutions} \label{section5}
\setcounter{equation}{0}

We come back to the proof of
Theorem \ref{Theorem1} starting from the  $L^1$ weak solution. The fist part of the theorem is restated as follows:

\begin{theo}\label{L1-theo1}
Assume that $0<s < 1,\,\gamma>  \max \{-2s,-1\}$. If $f$ belongs to $L^\infty([t_0, T]; L^1_\ell(\RR^3))$  for any $\ell\in\NN$ and is a weak solution of \eqref{1.1},
then for any $t_0<\tilde{t}_0<T$, we have
$$
f\in L^\infty([\tilde{t}_0,\, T]; \cS(\RR^3)).
$$
\end{theo}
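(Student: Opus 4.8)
The plan is to reduce Theorem \ref{L1-theo1} to Theorem \ref{L2-theo}: it suffices to show that the $L^1$ weak solution enters $L^\infty([t_k,T]; L^2_\ell(\RR^3))$ for every $\ell\in\NN$ and some $t_k\in]t_0,\tilde t_0[$, and then to invoke Theorem \ref{L2-theo} on $[t_k,T]$. Since the Fourier transform of an $L^1$ function is bounded and $\la\xi\ra^{-3-2\varepsilon}$ is integrable on $\RR^3$, we have $L^1_\ell(\RR^3)\hookrightarrow H^{-3/2-\varepsilon}_\ell(\RR^3)$, so the hypothesis gives $f\in L^\infty([t_0,T]; H^{a_0}_\ell(\RR^3))$ with $a_0=-3/2-\varepsilon$, for all $\ell$. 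Moreover, being a weak solution, $f(t)$ stays in a fixed set $\mathcal{U}(D_0,E_0)$ (mass and energy are conserved, the entropy is nonincreasing), so Proposition \ref{coercive-uniform} applies to $Q(f(t),\cdot)$ with $t$-independent constants; and $\gamma+2s>0$ since $\gamma>-2s$, so Theorem \ref{commu} and Proposition \ref{hard-upper} are available.

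I would then repeat the mollifier energy scheme from the proof of Theorem \ref{L2-theo}, but now with a negative starting exponent $a<0$. Fix once and for all $s'$ with $0<s'<\min\{1/2,s\}$, $\gamma+2s'>0$ and $2s'\ge(2s-1)^+$; such an $s'$ exists precisely because $\gamma>\max\{-2s,-1\}$. Put $\lambda(t)=Nt+a$ and take $N_0$ as large as \eqref{restriction} permits, namely $N_0=a+(5+\gamma)/2$, or, when $s>1/2$ and $\gamma>-1$, the larger value allowed by the relaxed condition \eqref{slight}. For $T_1$ with $NT_1<1$ and $a<0$ we have $\lambda(t)<1$, hence $\lambda(t)+s'<3/2$ throughout, so only part 1) of Theorem \ref{commu} is needed and the commutator contribution is controlled by $\|f\|_{L^1_{\gamma^+ +(2s-1)^+}}$ alone. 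To legitimate the energy identity \eqref{energyequality1} one verifies the hypothesis of Lemma \ref{conti-w-equation}: for fixed $\delta>0$, $M_{\lambda(t)}^\delta\in S^{\lambda(t)-N_0}_{1,0}$ maps $H^a_\ell$ into $H^{N_0-Nt}_\ell$, and $N_0-NT_1>s$ for $T_1$ small --- this is exactly where $\gamma>\max\{-2s,-1\}$ enters, the case $s\le1/2$ going through \eqref{restriction} and the case $s>1/2$ through \eqref{slight}. Then, combining \eqref{energyequality1} with Theorem \ref{commu}, Proposition \ref{hard-upper}, the coercive bound \eqref{coer-uni} (valid on $L^2_{\gamma/2}$ by Remark \ref{f-L2}) and Lemma \ref{lemm2.3}, exactly as in the derivation of \eqref{ene-34}, and letting $\delta\to0$, we get $f(T_1)\in H^{NT_1+a}$.

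Thus each run gains a fixed amount $\eta:=NT_1>0$ of plain Sobolev regularity, $\eta$ being chosen once and for all less than $1$ and less than the threshold just computed. Interpolating, via Lemma \ref{lemm2.3}, this unweighted gain against the weighted bound $f\in L^\infty([t_0,T];H^a_\ell)$ yields $\sup_{[t_1,T]}\|f\|_{H^{a+\eta/2}_\ell}<\infty$ for every $\ell$ and any $t_1>t_0$, just as at the end of the proof of Theorem \ref{L2-theo}. Restarting with $a$ replaced by $a+\eta/2$ and $t_0$ by $t_1$, after finitely many steps the exponent reaches a nonnegative value, so $f\in L^\infty([t_k,T];L^2_\ell(\RR^3))$ for all $\ell$ with $t_k\in]t_0,\tilde t_0[$; Theorem \ref{L2-theo} on $[t_k,T]$ then gives $f\in L^\infty([\tilde t_0,T];\cS(\RR^3))$.

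The main difficulty is the middle step, namely justifying that the merely $L^1$ solution becomes, after mollification, an admissible test function, i.e. checking the hypothesis of Lemma \ref{conti-w-equation}. The mollifier order $\lambda-N_0$ must be negative enough for $M_{\lambda(T_1)}^\delta f$ to land strictly above $H^s$ while still respecting the commutator restriction \eqref{restriction}, resp. \eqref{slight}; balancing these two requirements is exactly what produces the threshold $\gamma>\max\{-2s,-1\}$ and what forces the use of the sharpened commutator estimate \eqref{slight} when $s>1/2$. Everything else is a routine repetition of the $L^2$ argument combined with the interpolation Lemma \ref{lemm2.3}.
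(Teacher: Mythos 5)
Your proposal reproduces the paper's overall scheme faithfully: reduce to Theorem \ref{L2-theo} by bootstrapping from $f\in L^\infty H^{-3/2-\varepsilon}_\ell$ to $f\in L^\infty L^2_\ell$ via the mollifier energy identity from Lemma \ref{conti-w-equation}, the commutator bound of Theorem \ref{commu}, the uniform coercivity \eqref{coer-uni} (extended to $L^2_{\gamma/2}$ by Remark \ref{f-L2}), and the interpolation Lemma \ref{lemm2.3}. The $s\le 1/2$ branch is correct and is what the paper does. However, the $s>1/2$ branch has a genuine gap.

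You fix $s'$ ``once and for all'' with $0<s'<\min\{1/2,s\}$, but then invoke the relaxed commutator condition \eqref{slight} for $s>1/2$. These two choices are incompatible: the relaxation of \eqref{restriction} to \eqref{slight} in the proof of Proposition \ref{commu-molli} is established only under $s>s'>1/2$, because the revised estimate for $A_1$ exploits the factor $\la\xi\ra^{-2(2s'-1)}$ as a \emph{gain}, which requires $2s'-1>0$; with your $s'<1/2$ this factor is a loss and the bound fails. If you instead remain with \eqref{restriction} and $N_0=a+(5+\gamma)/2$, the requirement $N_0-NT_1>s$ of Lemma \ref{conti-w-equation} at the starting level $a=-3/2-\varepsilon$ forces $NT_1<1+\gamma/2-s-\varepsilon$, which can be negative when $s>1/2$ and $\gamma$ is close to $-1$ (e.g.\ $s=0.9$, $\gamma=-0.5$ gives $1+\gamma/2-s=-0.15$). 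This is exactly why, in the case $s>1/2$, the paper takes $1/2<s'<s$ with $s'$ close to $s$, uses $N_0=a+(4+\gamma+2s')/2$, and sets $NT_1=(\gamma+1)/10$; only with $s'>1/2$ is \eqref{slight} legitimately available. Replacing your fixed $s'<\min\{1/2,s\}$ by the paper's case-dependent choice ($s'<\min\{1/2,s\}$ when $s\le 1/2$, and $1/2<s'<s$ close to $s$ when $s>1/2$) closes the gap.
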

\begin{proof}
By Theorem \ref{L2-theo}, it is sufficient to prove,
for any $0<t_1\leq T$, (take again $t_0=0$)
\begin{equation}\label{l-2}
f\in L^\infty([t_1, T]; L^2_\ell(\RR^3)).
\end{equation}
Since $L^1(\RR^3)\subset H^{-3/2-\varepsilon}$, we assume that for any $\ell$
and any
$0<\varepsilon<\!<1$
\begin{equation}\label{first-hypo}
\sup_{[0,T]} \|f(t,\cdot)\|_{H^{-3/2- \varepsilon}_\ell} < \infty \,.
\end{equation}

 As in the proof of Theorem \ref{L2-theo},  we shall prove the theorem by  induction. Assume that for $0> a \ge -3/2 - \varepsilon$, we have
$$
\sup_{[0,T]} \|f(t,\cdot)\|_{H^{a}_\ell} < \infty\,.
$$
Take also $\lambda(t) = N t +  a$ for $N >0$.

\smallskip
We first consider the case $0<s \le 1/2$.  Choose $N_0 = a +(5+\gamma)/2\ge 1- \varepsilon + (\gamma/2) >0$
such that \eqref{restriction}
is fulfilled. Put $\varepsilon_0 = (1- 2s')/8 >0$ and consider $\varepsilon = \varepsilon_0$,
where $0< s' < s$ is chosen to satisfy $\gamma +2s' >0$.
If we choose $N, T_1>0$ such that $NT_1 = \varepsilon_0$ then
\begin{equation*}\label{caution2}
s+ \lambda(T_1) - N_0 -a = s + \varepsilon_0 -N_0 \le  s-1 + 2 \varepsilon_0-(\gamma/2)
<  (s'-1/2) + 2 \varepsilon_0 <0\,,
\end{equation*}
which shows
\begin{equation}\label{H-S-est}
M_{\lambda(t)}^\delta f(t)  \in L^\infty([0, T_1];  H^s_\ell (\RR^3))\,.
\end{equation}
This and
Lemma \ref{conti-w-equation}  lead to  (\ref{energyequality1}), and hence
we obtain \eqref{imp-ene} using Theorem \ref{commu}, and \eqref{ene-34}
by means of \eqref{coer-uni} and Lemma \ref{lemm2.3}.  The same procedure as in the proof
of Theorem \ref{L2-theo} shows  \eqref{l-2} by induction.

\smallskip
When $s >1/2$ we choose $1/2 < s' < s$ such that $\gamma +2s' >0, 2s' \ge (2s-1) $.
 Choose $N_0 = a + (5+\gamma+2s'-1)/2$ such that
\eqref{slight} is satisfied. Put $\varepsilon_0 = (\gamma +1 )/10 >0$ and consider $\varepsilon = \varepsilon_0$.
Then, we have
\begin{equation}\label{caution3}
s+ \lambda(T_1) - N_0 -a = s + \varepsilon_0 -N_0 \le  s-s' + 2 \varepsilon_0-(1+\gamma)/2
= s-s'-3 \varepsilon_0 \,. 
\end{equation}
Since we may assume $s-s' \le \varepsilon_0$, \eqref{caution3} also shows \eqref{H-S-est}, 
which completes the proof of the theorem by the same way as in the case $0<s \le 1/2$.
\end{proof}

In view of Remark \ref{pf-thm1-2-1} and the last assertion of Lemma \ref{conti-w-equation},
the proof of Theorem \ref{L1-theo1}
in the case $0<s < 1/2$ leads us easily to the following theorem where the assumption
\eqref{moment} can be removed.

\begin{theo}\label{H-infty-smooth}
Suppose that the cross section $B$ of the form \eqref{part2-hyp-2} satisfies  \eqref{part2-hyp-phys} and \eqref{angular}
with $0< s< 1/2$ and $\gamma >0$.  If
\[
f \in L^\infty([0,T]; L^1_{\max\{2, \gamma/2 +s\}}(\RR^3) \cap L \log L) \cap L^1([0,T]; L^1_{2+ \gamma}(\RR^3))
\]
is a weak solution, then $f  \in L^\infty([t_0,T]; \, H^\infty(\RR^3))$
for any $t_0 \in ]0,T[$.
\end{theo}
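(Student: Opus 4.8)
The plan is to imitate, essentially line by line, the argument for the range $0<s\le 1/2$ in the proof of Theorem \ref{L1-theo1}, systematically replacing each estimate by its $\gamma>0$, $0<s<1/2$ refinement: the last sentence of Theorem \ref{commu} (commutator estimate with the small weight $\gamma/2$), the last assertion of Lemma \ref{conti-w-equation} ($\ell_0=\gamma/2+s$), Remark \ref{pf-thm1-2-1}, and the fact that in the coercivity \eqref{coer-uni} the residual exponent $(-\gamma/2)^+$ vanishes when $\gamma>0$. Each of these refinements trades an arbitrarily high moment of $f$ for either the weight $\gamma/2+s$ on $M_\lambda^\delta f$ in Sobolev norms or the weight $\gamma+2s\le 2+\gamma$ on $f$ itself in the time-integrated terms; this is precisely why the hypotheses $f\in L^\infty([0,T];L^1_{\max\{2,\gamma/2+s\}}\cap L\log L)$ and $f\in L^1([0,T];L^1_{2+\gamma})$ suffice in place of the full moment condition \eqref{moment}.

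First I reduce the statement to an $L^2$ bound. Since $L^1_{\max\{2,\gamma/2+s\}}\subset L^1_2$, once I show that $f\in L^\infty([t_1,T];L^2(\RR^3))$ for every $t_1\in\,]0,T[\,$, Remark \ref{pf-thm1-2-1}, applied on the interval $[t_1,T]$ where $f\in L^\infty(L^2\cap L^1_2)$, gives $f\in L^\infty([t_0,T];H^\infty(\RR^3))$ for $t_0\in\,]t_1,T[\,$, hence for every $t_0\in\,]0,T[\,$. So the whole issue is to gain the $L^2$ bound from the $L^1$ data, which I do by induction on the Sobolev index following the $0<s<1/2$ part of Theorem \ref{L1-theo1}. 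Using $L^1_{\gamma/2+s}\hookrightarrow H^{-3/2-\varepsilon_0}_{\gamma/2+s}$ I start at $a_0=-3/2-\varepsilon_0$ with $\varepsilon_0>0$ small; at a generic step with $f\in L^\infty(H^{a_k})$, $a_k<0$, I set $\lambda(t)=Nt+a_k$, $N_0=a_k+(5+\gamma)/2$ so that \eqref{restriction} holds, and $NT_1=\varepsilon_0$. Because $N_0$ is tied to $a_k$, the operator $M_{\lambda(t)}^\delta(D_v)$ has order $Nt-(5+\gamma)/2<0$ uniformly in $k$; hence for every $k$ it sends the permanent low-regularity bound $f\in L^\infty H^{a_0}_{\gamma/2+s}$ into $M_{\lambda(t)}^\delta f\in L^\infty H^{s_1}_{\gamma/2+s}$ with $s_1=1-2\varepsilon_0+\gamma/2>s$ (for $\varepsilon_0$ small), and together with $f\in L^1(L^1_{2+\gamma})\subset L^1(L^1_{\max\{\gamma+2s,2\}})$ this makes Lemma \ref{conti-w-equation} (last assertion) applicable, so \eqref{continuity} and the energy identity \eqref{energyequality1} hold for the mollified solution. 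Note that here $\lambda(t)+s'\le\varepsilon_0+s'<3/2$, so we are always in case 1) of Theorem \ref{commu}.

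Next I plug \eqref{energyequality1} into Theorem \ref{commu} (the $\gamma>0$ clause, weight $\gamma/2$) for the commutator, Proposition \ref{hard-upper} for the principal term, and the coercivity \eqref{coer-uni} — whose residual, since $(-\gamma/2)^+=0$, is only $C_f\|\la v\ra^{\gamma/2}M_\lambda^\delta f\|_{L^2}^2\lesssim\|f\|^2_{H^{-2}_{\gamma/2}}$ — and then use Lemma \ref{lemm2.3} to absorb the $s'$-term, reaching the analogue of \eqref{ene-34}, namely (with the time origin at $t_k$) $\|(M_\lambda^\delta f)(t)\|_{L^2}^2\lesssim\|f(0)\|^2_{H^{a_k}}+C_f\int_0^t\|f(\tau)\|^2_{H^{-2}_{\gamma/2}}d\tau$, whose right-hand side is finite and bounded because $f\in L^\infty L^1_{\gamma/2}\hookrightarrow L^\infty H^{-3/2-\varepsilon_0}_{\gamma/2}$ and $-3/2-\varepsilon_0>-2$. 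Letting $\delta\to+0$ gives $f(T_1)\in H^{a_k+\varepsilon_0}$, and since this holds for every $0<T_1\le T$ (choosing $N=\varepsilon_0/T_1$), Lemma \ref{lemm2.3} and the inductive bound yield $f\in L^\infty([t_{k+1},T];H^{a_{k+1}})$ with $a_{k+1}=a_k+\varepsilon_0'$ for a fixed $0<\varepsilon_0'<\varepsilon_0$ and a slightly larger $t_{k+1}$. After finitely many steps $a_k\ge 0$, i.e. $f\in L^\infty([t_1,T];L^2)$, and $t_1>0$ being arbitrarily small this finishes the reduction.

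The step I expect to be the main obstacle is the absorption of the commutator contribution when $\gamma>2$: then the weight $\max\{2,\gamma/2+s\}$ available uniformly in $t$ is strictly less than the weight $\gamma$ that the commutator estimate attaches to $f$, so $\|f\|_{L^1_\gamma}$ is a priori only in $L^1_t$. I handle this by a weighted--$L^1$ interpolation, $\|f(\tau)\|_{L^1_\gamma}\le C\|f(\tau)\|_{L^1_{\gamma/2+s}}^{\theta}\|f(\tau)\|_{L^1_{2+\gamma}}^{1-\theta}$ with $\theta=4/(4+\gamma-2s)$, so that $\|f(\tau)\|_{L^1_\gamma}\lesssim\|f(\tau)\|_{L^1_{2+\gamma}}^{1-\theta}$; combining this with $\|M_\lambda^\delta f\|_{H^{s'}_{\gamma/2}}\lesssim\|M_\lambda^\delta f\|_{H^s_{\gamma/2}}^{s'/s}\|M_\lambda^\delta f\|_{L^2_{\gamma/2}}^{1-s'/s}$ and Young's inequality, the commutator integral is bounded by $\epsilon\int_0^t\|M_\lambda^\delta f\|^2_{H^s_{\gamma/2}}d\tau+C_\epsilon\int_0^t\|f\|_{L^1_{2+\gamma}}^{(1-\theta)s/(s-s')}\|M_\lambda^\delta f\|^2_{L^2_{\gamma/2}}d\tau$; choosing $s'>0$ small enough that $(1-\theta)s/(s-s')\le 1$ puts the weight into $L^1_t$, so the first term is absorbed into the coercive gain $c_f\int_0^t\|M_\lambda^\delta f\|^2_{H^s_{\gamma/2}}d\tau$ and a Gronwall argument closes the energy estimate. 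For $0<\gamma\le 2$ this difficulty does not arise, since then $\|f\|_{L^1_\gamma}\le\|f\|_{L^1_{\max\{2,\gamma/2+s\}}}\in L^\infty_t$ and the constant $C_f$ in the analogue of \eqref{ene-34} simply absorbs it. Everything else is a routine transcription of the cited results.
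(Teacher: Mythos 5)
Your proposal tracks the paper's intended argument almost exactly: the paper gives only a one-line justification, namely that Remark \ref{pf-thm1-2-1} (coercivity with residual $\|\cdot\|_{L^2_{\gamma/2}}$ since $(-\gamma/2)^+=0$) and the last assertion of Lemma \ref{conti-w-equation} (the reduced weight $\ell_0=\gamma/2+s$) allow the $0<s<1/2$ portion of Theorem \ref{L1-theo1} to run without \eqref{moment}, and you reconstruct this reduction and the induction scheme correctly, including the order count $\lambda(t)-N_0=Nt-(5+\gamma)/2$ and the role of Theorem \ref{commu}'s $\gamma>0$ refinement.

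The one place where you go beyond the paper's terse justification is the case $\gamma>2$, where $\|f\|_{L^1_\gamma}$ (the $f$-factor in Theorem \ref{commu} for $\gamma>0$, $s<1/2$) is no longer dominated by the $L^\infty_t(L^1_{\max\{2,\gamma/2+s\}})$ hypothesis, and your concern is legitimate. The spirit of your fix is right, and indeed consistent with how the paper's proof of Lemma \ref{conti-w-equation} keeps $\int\|f(\tau)\|_{L^1_{\gamma^+ +(2s-1)^+}}d\tau$ under the time integral, precisely so that $L^1_t(L^1_{2+\gamma})$ does the job. But the Gronwall step as you wrote it does not quite close: after Young the residual is $\int_0^t g(\tau)\|M^\delta_\lambda f(\tau)\|^2_{L^2_{\gamma/2}}\,d\tau$ with $g\in L^1_t$, whereas the controlled quantity on the left is $\|M^\delta_\lambda f(t)\|^2_{L^2}$, unweighted. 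One more use of Lemma \ref{lemm2.3} is needed to trade the $\gamma/2$ weight against a small number of derivatives — e.g.\ $\|M^\delta_\lambda f\|^2_{L^2_{\gamma/2}}\le C\|M^\delta_\lambda f\|_{H^{-\delta}_\gamma}\|M^\delta_\lambda f\|_{H^\delta}$ — with the first factor controlled via the very negative order of $M^\delta_\lambda$ and $f\in L^\infty_t(L^1_{\gamma/2})$, and the second absorbed by the coercive gain $c_f\|M^\delta_\lambda f\|^2_{H^s_{\gamma/2}}$; the same decomposition also disposes of the coercive residual itself, as in Remark \ref{pf-thm1-2-1}. With that additional interpolation your argument closes; without it the Gronwall step is incomplete.
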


We consider now the second part of Theorem \ref{Theorem1}, which is
stated as follows: 
\begin{theo}\label{L1-5}
Assume that $-1\geq \gamma>-2s$. Let $f\in L^\infty([t_0, T]; L^1_\ell(\RR^3))$ for any $\ell\in\NN$ be a  weak solution of \eqref{1.1} satisfying
the entropy dissipation estimate
$$
\int^{T}_{t_0} D(f(t), f(t))dt<+\infty.
$$
Then for any $t_0<\tilde{t}_0<T$, we have
$$
f\in L^\infty([\tilde{t}_0,\, T]; \cS(\RR^3)).
$$
\end{theo}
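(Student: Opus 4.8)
The plan is to reduce, as in the proof of Theorem~\ref{L1-theo1}, to proving
\[
f\in L^\infty([t_1,T];L^2_\ell(\RR^3))\qquad\text{for every }\ell\in\NN\ \text{and every }t_1\in\,]t_0,T[\,,
\]
after which Theorem~\ref{L2-theo} gives $f\in L^\infty([\tilde t_0,T];\cS(\RR^3))$. The obstruction to be removed is that $-1\ge\gamma>-2s$ forces $s>1/2$, and for such $s$ the parameter $\varepsilon_0=(\gamma+1)/10$ driving the mollified-energy bootstrap of Theorem~\ref{L1-theo1} is $\le 0$; equivalently, the trivial starting regularity $f\in L^\infty_t H^{-3/2-\varepsilon}_\ell$ coming from $f\in L^1_\ell$ is too low for $M_{\lambda(t)}^\delta f$ to land in $H^s$, which is what legitimates the energy identity \eqref{energyequality1} through Lemma~\ref{conti-w-equation}. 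The entropy dissipation hypothesis provides a better starting point.

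First I would invoke Corollary~\ref{entropy-dissipation-coer}: with the moment bound it gives $\int_{t_0}^T\|\sqrt{f(t)}\|_{H^s_{\gamma/2}}^2\,dt<\infty$. Hence $\la v\ra^{\gamma/2}\sqrt f\in L^2([t_0,T];L^{6/(3-2s)})$ by the Sobolev embedding $H^s(\RR^3)\hookrightarrow L^{6/(3-2s)}(\RR^3)$, so $\la v\ra^\gamma f\in L^1([t_0,T];L^{3/(3-2s)})$. Interpolating with the moment bounds by Lemma~\ref{interp-2-lem}, choosing $q=3/(3-s)$ so that the result is square integrable in time, and reabsorbing the weight $\la v\ra^\gamma$, one obtains $f\in L^2([t_0,T];L^{3/(3-s)}_\ell)$ for every $\ell$, and so, by the dual Sobolev embedding $L^{3/(3-s)}\hookrightarrow H^{s-3/2}$,
\[
f\in L^2\big([t_0,T];H^{s-3/2}_\ell(\RR^3)\big)\qquad\text{for every }\ell\in\NN\,.
\]
The crucial gain is that $s-3/2>-2-\gamma/2$ --- equivalently $\gamma>-2s-1$, which holds since $\gamma>-2s$ --- and this is precisely the room that was missing when $\gamma\le -1$ (there $a=-3/2-\varepsilon$ would require $\gamma>-1+2\varepsilon$).

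With this input I would rerun the mollified-energy scheme of Theorems~\ref{L1-theo1}--\ref{L2-theo}. Choose a base time $\tau_0\in\,]t_0,\tilde t_0[$ at which $f(\tau_0)\in H^{s-3/2}_\ell$ for every $\ell$ and $\int_{\tau_0}^T\|f(\tau)\|_{H^{s-3/2}_\ell}^2\,d\tau<\infty$ (true for a.e.\ $\tau_0$), put $\lambda(t)=N(t-\tau_0)+(s-3/2)$ and $N_0=\tfrac{5+\gamma}{2}+(s-3/2)$ so that \eqref{restriction} holds, and take $N>0$ small enough (possible since $1+\gamma/2>0$) that $N_0-\lambda(t)\ge\tfrac{3}{2}+\varepsilon$ on $[\tau_0,T]$. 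Then $M_{\lambda(t)}^\delta f(\tau)\in H^s_{\ell_0}$ for a.e.\ $\tau$ with $\|M_{\lambda(t)}^\delta f(\cdot)\|_{H^s_{\ell_0}}\in L^2([\tau_0,T])\subset L^1([\tau_0,T])$, and $M_{\lambda(t)}^\delta f(t)\in L^2$ for every $t$; this suffices to run the proof of Lemma~\ref{conti-w-equation} --- with its $L^\infty$-in-$\tau$ control of $M_{\lambda(t)}^\delta f(\tau)$ weakened to $L^1$-in-$\tau$ control, the collision term being integrated in $\tau$ anyway --- so that \eqref{continuity} and the energy identity \eqref{energyequality1} hold. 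Then the coercive estimate \eqref{coer-uni} applied to $M_{\lambda(t)}^\delta f$, the commutator bound of Theorem~\ref{commu} in its part~1) (since $s'+\lambda(t)<3/2$ at this first stage), Lemma~\ref{lemm2.3}, and the uniform bound $M_\lambda^\delta(D)\in S^{\lambda}_{1,0}$ in $\delta$ produce estimate \eqref{ene-34} with $a=s-3/2$; letting $\delta\to 0$, running the estimate on $[\tau_0,\sigma]$ for each $\sigma$ (with $N=N(\sigma)$) as in the proof of Theorem~\ref{L2-theo}, and interpolating with the moment bounds by Lemma~\ref{lemm2.3}, one gets a uniform bound $f\in L^\infty([\tilde t_0,T];H^{a_1}_\ell)$ for every $\ell$ with $a_1>-2-\gamma/2$. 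The regularity now exceeds $-2-\gamma/2$ and is $L^\infty$ in time, so the obstruction in Theorem~\ref{L1-theo1} is gone: its $s>1/2$ bootstrap runs, raising regularity by a fixed positive increment per step until $f\in L^\infty([\tilde t_0,T];L^2_\ell)$ for all $\ell$, whereupon Theorem~\ref{L2-theo} concludes.

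The step I expect to be the main obstacle is exactly the use of Lemma~\ref{conti-w-equation} in the previous paragraph: at the improved regularity one controls $f$ only in $L^2$ (not $L^\infty$) in time, so \eqref{energyequality1} cannot be quoted verbatim, and one must verify that the proof of Lemma~\ref{conti-w-equation} --- admissibility of $\psi=(M_{\lambda(t)}^\delta)^2f(t)$ as a test function, the bound of $\int Q(f,f)\psi$ via Proposition~\ref{hard-upper} and Theorem~\ref{commu}, and the passage from the approximate identities \eqref{difference-j} to \eqref{energyequality1} --- survives when the $L^\infty$-in-$t'$ hypothesis on $M_{\lambda(t)}^\delta f(t')$ is replaced by $L^1$-in-$t'$ control together with $M_{\lambda(t)}^\delta f(t)\in L^\infty_t L^2$, the continuity $f\in C(\RR^+;\cD'(\RR^3))$ being used to upgrade the resulting a.e.-in-$t$ identities to all $t$. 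Everything else --- tracking moments through the two Sobolev embeddings and Lemma~\ref{interp-2-lem}, checking \eqref{restriction} and $s'+\lambda<3/2$ at the relevant steps, and the uniform-in-$\delta$ passage to the limit --- is routine given the estimates of Sections~\ref{sect-IV-10}--\ref{section4}.
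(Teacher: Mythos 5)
Your proposal takes a genuinely different route from the paper's. The paper exploits the fact that estimate \eqref{M-inequality2} (unlike \eqref{M-inequality}) imposes no restriction on $N_0$: choosing $N_0$ arbitrarily large makes $(M^\delta_{\lambda(t)})^2 f(t)$ belong to $W^{2,\infty}$ directly, so it is an admissible test function for the $L^1$ weak solution without invoking Lemma~\ref{conti-w-equation} at all. The price is that the commutator piece $A_1$ must be re-estimated as $\tilde A_{1,\lambda}$ in \eqref{tilde-A1}, and this is precisely where the entropy dissipation enters their argument: Corollary~\ref{entropy-dissipation-coer} and Lemma~\ref{interp-2-lem} give a pointwise-in-time bound $\|\la v\ra^\gamma f\|_{L^{3/(3-2s)}}\lesssim\|\la v\ra^{\gamma/2}\sqrt f\|^2_{H^s}+\|f\|_{L^1_{\ell_0}}$ whose integral in $t$ is controlled by \eqref{entropy-dissipation-estimate-0}, and a delicate three-step induction then propagates both an $L^\infty_t$ and an $L^2_t$ bound on the mollified norms. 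You use the very same preliminary tools (Corollary~\ref{entropy-dissipation-coer}, Sobolev embedding, Lemma~\ref{interp-2-lem}) to upgrade the regularity of $f$ itself to $L^2_t H^{s-3/2}_\ell$ and then feed that into the bootstrap of Theorems~\ref{L2-theo}--\ref{L1-theo1}. The intermediate gain is correct --- the choice $q=3/(3-s)$ makes the interpolation exponent on the $L^{3/(3-2s)}$ factor equal to $1/2$, hence square-integrability in time, and $a=s-3/2$ does clear the bootstrap constraint $NT_1<1+\gamma/2>0$ since $\gamma>-2$.

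The genuine gap is exactly where you flag it, and it is not a routine verification. Your bootstrap rests on Lemma~\ref{conti-w-equation}, whose proof requires $M^\delta_{\lambda(t)}f(t')\in L^\infty_{t,t'}H^{s_1}_{\ell_0}$ for some $s_1>s$; you only have $L^2$ in $t'$. The Riemann-sum passage to \eqref{energyequality-kappa}, the control of the cross-time term $\int f(t_j)\big[(M^{\delta,\kappa}_{\lambda(t_j)})^2-(M^{\delta,\kappa}_{\lambda(t_{j-1})})^2\big]f(t_{j-1})\,dv$, and the dominated-convergence step all take suprema over $\tau$ of mollified $H^s_{\ell_0}$ norms. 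Converting these to $L^2_\tau$ control is not simply "integrating in $\tau$ anyway": the cross-time term has no $\tau$-integral to absorb the second factor, and dominated convergence needs a $\tau$-integrable majorant that is not supplied. Until a version of Lemma~\ref{conti-w-equation} valid under $L^2_{t'}$ hypotheses is actually proved, the energy identity \eqref{energyequality1} --- and with it the whole scheme --- is not justified at the relevant regularity. The paper's choice of arbitrarily large $N_0$ bypasses Lemma~\ref{conti-w-equation} entirely, which is why it is the cleaner device here; if you take that step too, you are forced to re-estimate $\tilde A_{1,\lambda}$ and will arrive at something very close to the paper's three-step argument.
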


For the proof, we  only need to reconsider the term $A_1$ defined in \eqref{A_1} under the hypothesis  $-1 \ge \gamma >-2s$.
Note that we can now choose arbitrarily large $N_0$ in \eqref{mollifier} because neither
\eqref{restriction} nor \eqref{slight} is required. 
Hence  $(M^\delta_{\lambda(t)})^2 f(t)$ belongs to
$W^{2,\infty}$, which enable us to take  $(M^\delta_{\lambda(t)})^2 f(t)$ as a test function.
However $\lambda(t)$ can not be taken  as large as we want, because it is also restricted to
the small gain regularity coming from the dissipation estimate. Thanks to Theorem \ref{L2-theo}, it suffices to show $f \in L^\infty([T_0,T_1]; L^2_\ell)$ by induction,
starting from \eqref{first-hypo}  where we take again $t_0 =0$.

It follows from  \eqref{M-inequality2} of Lemma \ref{basic-inequality}, \eqref{later-use1} and \eqref{later-use2} that $A_1$ can be replaced by
\begin{align}\label{tilde-A1}
\tilde{A}_{1, \lambda} &=\iint_{\RR^6}\frac{|\hat{f}(\xi_*)|}
{\la\xi_*\ra^{3+\gamma}}|\hat{g}(\xi-\xi_*)|\,
|\hat{h}(\xi)|\la\xi\ra^{\lambda}
{\bf 1}_{\la \xi_* \ra \geq \sqrt{2}|\xi|}  \frac{\la \xi\ra}{\la \xi_*\ra}d\xi d\xi_*
\end{align}
We divide the proof in three steps.

\smallskip
\noindent
{\bf 1st step:}
Take $s' >1/2$ such that $\gamma +2s' >0$ and $s'<s$. Put $s_0 = \frac1 4 (\gamma+2s')$.
For arbitrary $t>0$ and $N >0$ satisfying $Nt = s_0$, we set
\[
\lambda_1(\tau) = N \tau -\frac 3 2 -\varepsilon \enskip \mbox{for} \enskip \tau \in [0, t]\,,
\]
where $\varepsilon >0$ is arbitrarily small.
If we substitute $\lambda = \lambda_1(\tau)$ into \eqref{tilde-A1} then, in view of $N \tau \le s_0$, we have
\begin{align*}
\tilde A_{1.\lambda_1}(\tau)
&\lesssim   \|\hat{g}\|_{L^\infty}\iint_{\RR^6}\frac{|\hat{f}(\xi_*)|}
{\la\xi_*\ra^{3+\gamma-s_0}}\frac{
|\hat{h}(\xi)|}{\la\xi\ra^{3/2+\varepsilon}}
d\xi d\xi_*\\
&\lesssim  \|\hat f\|_{L^{3/(2s')}}
\|g\|_{L^1}\|h\|_{L^2} \lesssim\|f\|_{L^{3/(3-2s')}} \|g\|_{L^1} \|h\|_{L^2}
\end{align*}
because of the H\"older inequality and the fact that $(3+\gamma -s_0)\{3/(3-2s')\} >3$.
By means of Lemma \ref{interp-2-lem}, we have for some $\ell_0 >0$
\begin{align*}
\tilde A_{1,\lambda_1} &\lesssim \Big( \|\la v \ra^{\gamma} f\|_{L^{3/(3-2s)}}  + \|f\|_{L^1_{\ell_0}}\Big) \|g\|_{L^1} \|h\|_{L^2} \\
&\lesssim  \Big( \|\la v \ra^{\gamma/2}\sqrt{f}\|^2_{H^s}  + \|f\|_{L^1_{\ell_0}}\Big) \|g\|_{L^1} \|h\|_{L^2} \,.
\end{align*}
Putting $f =g = f(\tau, v)$ and $h = M_{\lambda_1(\tau)}^\delta f(\tau, v)$, we have a term coming from $\tilde A_{1,\lambda_1}$
in estimating
\[
\int_0^t
\Big(   M_{\lambda_1}^\delta  Q(f(\tau),  f(\tau))  -Q(f(\tau), M_{\lambda_1}^\delta f(\tau)), M_{\lambda_1}^\delta f(\tau) \Big)  d\tau
\]
as follows:
\begin{align*}
&\Big(\sup_{\tau \in [0,t] }      \|f(\tau)\|_{L^1} \|M_{\lambda_1(\tau)}^\delta f(\tau)\|_{L^2}
\Big) \int_0^t   \|\la v \ra^{\gamma/2}\sqrt{f(\tau) }\|^2_{H^s} d\tau\\
&+ \Big( \sup_{\tau \in [0,t] }      \|f(\tau)\|^2_{L^1_{\ell_0}}\Big)\sqrt t
\Big(\int_0^t \|M_{\lambda_1(\tau)}^\delta f(\tau)\|^2_{L^2} d\tau\Big)^{1/2}\\
& \le \frac{1}{10}\sup_{\tau \in [0,t] }\|M_{\lambda_1(\tau)}^\delta f(\tau)\|^2_{L^2} + C_f
\Big\{
\Big(\int_0^t D(f(\tau), f(\tau)) d\tau\Big)^2\\
& \qquad \qquad \qquad +t \sup_{\tau \in [0,t] }      \|f(\tau)\|^4_{L^1_{\ell_0}}+
\int_0^t \|M_{\lambda_1(\tau)}^\delta f(\tau)\|^2_{L^2} d\tau
\Big\} \,,
\end{align*}
where we have used Corollary \ref{entropy-dissipation-coer} in the last inequality.
Instead of \eqref{ene-34}, we obtain
\begin{align*}
&\|M_{\lambda_1(t)}^\delta f(t)\|^2_{L^2} -\frac{1}{10}
\sup_{\tau \in [0,t] }\|M_{\lambda_1(\tau)}^\delta f(\tau)\|^2_{L^2}
+ \int_0^t \|M_{\lambda_1(\tau)}^\delta f(\tau)\|^2_{H^{s'}} d\tau\\
&\quad \lesssim \|M_{\lambda_1(0)}^\delta f(0)\|^2_{L^2} + \Big(\int_0^t D(f(\tau), f(\tau)) d\tau\Big)^2
\\
& \qquad \qquad \qquad + t  \sup_{\tau \in [0,t] }      \|f(\tau)\|^4_{L^1_{\ell_0}} { + \int_0^t \|f(\tau)\|^2_{H^a_\ell} d\tau}\,. 
\end{align*}
If we consider $\tau \in [0,t]$ instead of $t$ then
the first term on the right hand side can be replaced by $\sup_{\tau \in [0,t]} \|M_{\lambda_1(\tau)}^\delta f(\tau)\|^2_{L^2} $, which absorbs the second term on the right hand side.
Letting $\delta \rightarrow 0$ we obtain, in view of $Nt =s_0$,
\begin{align}\label{each-t}
\|\la D \ra^{s_0 -3/2 -\varepsilon} f(t)\|_{L^2} < \infty,
\end{align}
and
\begin{align}\label{integral-est}
\int_0^t \|\la D \ra^{N\tau -3/2 -\varepsilon} f(\tau)\|^2_{H^{s'}} d\tau < \infty.
\end{align}

\smallskip
\noindent
{\bf 2nd step:}  Let $\kappa >0$ be  small arbitrarily. Considering $\tau \in [\kappa, t]$ instead of $t$ in \eqref{each-t},
we may assume
\begin{align*}
\sup_{\tau \in [\kappa, t] }\|\la D \ra^{s_0 -3/2 -\varepsilon} f(\tau)\|_{L^2} < \infty\,.
\end{align*}
For arbitrary $t>\kappa$ and $N >0$ satisfying $N(t-\kappa) = s_0$ we set
\[
\lambda_2(\tau) = s_0 +N (\tau-\kappa) -\frac 3 2 -\varepsilon \enskip \mbox{for} \enskip \tau \in [\kappa, t]\,.
\]
If we substitute $\lambda = \lambda_2(\tau)$ into \eqref{tilde-A1} then we have
\begin{align*}
&\tilde A_{1.\lambda_2}(\tau)
\lesssim  \int_{\RR^3}
\frac{
|\la \xi \ra^{s'}\hat{h}(\xi)|}{\la\xi\ra^{3/2+\varepsilon}}\\
&
\Big(\int_{\RR^3}
\frac{  \la \xi_* \ra^{s_0-\frac{3}{2} -\varepsilon} |\hat{f}(\xi_*) |      \la \xi_*-\xi\ra^{N(\tau-\kappa)-\frac{3}{2}-
\varepsilon+s'} { |\hat g(\xi-\xi_*)  |   }         }
{\la\xi_*\ra^{3+\gamma+2s' -3 -2 \varepsilon} } \left(\frac{\la \xi\ra}{\la \xi_* \ra}\right)^{1-s'}
d\xi_*  \Big) d\xi \\
&\lesssim  \|f\|_{H^{s_0-\frac{3}{2}-\varepsilon}}
\|\la D\ra^{s'+ N(\tau-\kappa) -\frac{3}{2} -\varepsilon} g\|_{L^2}\|h\|_{H^{s'}} \,,
\end{align*}
if $\gamma+2s' > 2\varepsilon$.
Putting $f =g = f(\tau, v)$ and $h = M_{\lambda_2(\tau)}^\delta f(\tau, v)$ we have a term coming from $\tilde A_{1,\lambda_2}$
in estimating
\[
\big |\int_\kappa^t
\Big(   M_{\lambda_2}^\delta  Q(f(\tau),  f(\tau))  -Q(f(\tau), M_{\lambda_2}^\delta f(\tau)), M_{\lambda_2}^\delta f(\tau) \Big)  d\tau\big|
\]
as follows:
\begin{align*}
&\Big(\sup_{\tau \in [\kappa,t] }      \|f(\tau)\|_{H^{s_0-\frac{3}{2}-\varepsilon}} \Big)
\Big\{
\int_\kappa^t
\|M_{\lambda_2(\tau)}^\delta f(\tau)\|^2_{H^{s'}} d\tau  \Big \}
\\
&\qquad \qquad +
\int_\kappa^t   \|\la D\ra^{s'+ N(\tau-\kappa) -\frac{3}{2} -\varepsilon} f(\tau)\|^2_{L^2} d\tau\Big\}\,.
\end{align*}
In order to avoid the confusion we write $N =N_2 = s_0/(t-\kappa)$ in this second step and $N = N_1
= s_0/t$ in \eqref{integral-est}. Then we have
\[N_2 (\tau-\kappa) \le N_1 \tau\, \enskip \mbox{if }\enskip \tau \in [\kappa, t]\,,
\]
from which we can use \eqref{integral-est} to estimate the term coming from $\tilde A_{1,\lambda_2}$.
In this step we finally obtain, in view of $N(t-\kappa) = s_0$,
\begin{align*}
\|\la D \ra^{2s_0 -3/2 -\varepsilon} f(t)\|_{L^2} < \infty
\end{align*}
and
\begin{align}\label{integral-est-good}
\int_\kappa^t \|\la D \ra^{s_0 -3/2 -\varepsilon} f(\tau)\|^2_{H^{s'}} d\tau < \infty.
\end{align}

\smallskip
\noindent
{\bf 3rd step:}  For $k \ge 2$, suppose that
\begin{align*}
\sup_{\tau \in [(k-1)\kappa, t] }\|\la D \ra^{(k -1)s_0 -3/2 -\varepsilon} f(\tau)\|_{L^2} < \infty\,.
\end{align*}
For arbitrary $t> k \kappa$ and $N >0$ satisfying $N(t- k \kappa) = s_0$ we set
\[
\lambda_k(\tau) = (k-1)s_0 +N (\tau-\kappa) -\frac 3 2 -\varepsilon \enskip \mbox{for} \enskip \tau \in [\kappa, t]\,.
\]
Consider $M^\delta_{\lambda_k(\tau)}$. Then,
using \eqref{integral-est-good} instead of  \eqref{integral-est}, we can proceed the induction method
by the almost same way as in
the second step.
Since $\kappa >0$ is arbitrary we obtain the desired conclusion.

\smallskip
\noindent
{\bf Acknowledgements :}
The research of the first author was supported in part by the Zhiyuan
foundation and Shanghai Jiao Tong University.
The research of the second author was
supported by  Grant-in-Aid for Scientific Research No.22540187,
Japan Society of the Promotion of Science. The research of the fourth  author was supported partially by `` the Fundamental Research Funds for the Central Universities''. The last author's research was supported by the General Research Fund of Hong Kong, CityU No.103109, and the Lou Jia Shan Scholarship programme of Wuhan University.

\smallskip

\end{document}